\documentclass{articlenotescls}
\usepackage{gencatmacros}

\setboolean{cdbaselineon}{true}
\setboolean{showcdpunctuation}{true}

\DeclareMathOperator{\el}{\mathbf{el}}
\DeclareMathOperator*{\Coprod}{\displaystyle{\coprod}}

\newcommand{\Sk}{\mathrm{Sk}}
\newcommand{\Cosk}{\mathrm{Cosk}}
\newcommand{\sk}{\mathrm{sk}}
\newcommand{\cosk}{\mathrm{cosk}}
\newcommand{\nd}{\mathrm{nd}}

\newcommand{\Lan}{\mathrm{Lan}}
\newcommand{\Ran}{\mathrm{Ran}}
\newcommand{\Chain}{\mathrm{chain}}

\newcommand{\iso}{\mathrm{iso}}

\newcommand{\coeq}{\mathrm{coeq}}

\newcommand{\iep}{\mathrm{IEP}}

\newcommand{\Graph}{\mathbf{Graph}}
\newcommand{\MarCat}{\mathbf{MarCat}}
\newcommand{\RelCat}{\mathbf{RelCat}}

\newcommand{\cL}{\mathcal{L}}

\begin{document}
	\pagestyle{empty}
	\pagenumbering{Roman}
	
	\title{Revisiting colimits in \tpdf{$\Cat$} and homotopy category}
	\author{Varinderjit Mann}
	\date{\today}
	\maketitle
	
	\begin{abstract}
		In this paper, we justify and make precise an elementary approach that establishes the existence of (co)limits in $\Cat$. This approach, while conceptually evident, has not been made fully explicit or systematically described in the literature. We first demonstrate an equivalence between the existence of the homotopy category functor $h : \sSet \rightarrow \Cat$ and the existence of a specific class of weighted colimits in $\Cat$. We then construct these weighted colimits explicitly by using certain properties of simplicial sets and the nerve functor. Consequentially, the embedding $N : \Cat \hookrightarrow \sSet$ is reflective, and can be used to infer the (co)completeness of $\Cat$. Finally, we use this approach to reformulate the construction of coequalizers and localizations in $\Cat$.
	\end{abstract}
	
	\tableofcontents
	\clearpage 
	
	\pagestyle{scrheadings}
	\pagenumbering{arabic}
	
	\section{Introduction}
	\subsection{Motivation}

It is a well-known result that the category of all (small) categories $\Cat$ is cocomplete \cite{betti1983,borceux1,bednar1999,reihlcat,wolff1974}. In \cite{betti1983} and \cite{wolff1974}, cocompleteness follows from their main results, which imply that the canonical forgetful functor $U : \Cat \rightarrow \Graph$ is \emph{finitary monadic}. However, these papers work with the general case of $\cV$\emph{-enriched categories} and $\cV$\emph{-graphs}. And, this generality makes the results comparatively inefficient when applied to the case of $\Cat$ because the category $\Set$ possesses rich additional structures.

On the other hand, \cite[Prop.~4.1]{bednar1999} provides a construction of coequalizers in $\Cat$ by utilizing the theory of \emph{generalized congruences} and a classification of \emph{epimorphisms}. This approach is the most elementary in spirit, but the constructions themselves are fairly intricate. A similar situation occurs with another explicit construction of coequalizers, as given in \cite[Prop.~5.1.7]{borceux1}. However, a clearer and more recent exposition using coequalizers can be found in \cite[Thm.~1.4.7]{yauinvcat2020}.

In this paper, we describe a different elementary perspective on the cocompleteness of $\Cat$ in terms of a \emph{reflective embedding} of $\Cat$ into $\sSet$ arising from the canonical inclusion $\bDelta \hookrightarrow \Cat$. This embedding $N : \Cat \rightarrow \sSet$, called the \emph{nerve functor}, is well known and commonly taken to be reflective in the literature. However, the property of $N$ being reflective is dependent upon the cocompleteness of $\Cat$. Thus, assuming this property leads to either an inadequate or a circular argument\footnote{To our knowledge, similar arguments seem to have occurred in books such as \cite{reihlcat}.}. \emph{Quite surprisingly}, we were unable to find a direct and self-contained proof in the literature.

We tackle this problem by demonstrating the existence of a certain restricted class of colimits in $\Cat$, which notably circumvents the need to construct all coequalizers as done in \cite[Prop.~4.1]{bednar1999} and \cite[Prop.~5.1.7]{borceux1}. \textit{Weighted colimits} are used as a central tool to express and study this class of colimits. In general, many concepts such as \emph{pointwise Kan extensions} or even usual colimits are more naturally described using the language of weighted colimits. Although weighted colimits are defined generally over an arbitrary base of enrichment $\cV$ (a \emph{symmetric monoidal closed category}), their description becomes considerably simplified over the base category $\Set$. Consequentially, weighted colimits over $\Set$ are both conceptually natural and computationally tractable.

A potential caveat to our approach is that there is no immediate generalization to the enriched case as in \cite{betti1983,wolff1974}. This is because $\cV\text{-}\Cat$ doesn't have any canonical well behaved simplex category associated with it, unlike the canonical inclusion $\bDelta \hookrightarrow \Cat$. Evidently, this highlights the importance of the additional structure on the category $\Set$ that this approach relies upon. Another possible objection is that the core idea of  this approach resembles the definition of generalized congruences as in \cite{bednar1999} or the construction of coequalizers as in \cite{borceux1}. However, we specifically circumvent the need to construct coequalizers or congruences in full generality. Instead, the restricted class of colimits is not only easier to construct and understand, but also closely related to the well-understood pointwise colimits in simplicial sets. 

Finally, we reemphasize that the \emph{primary motivation} for this paper is to provide a direct and complete account of this seemingly natural approach, which appears to be absent from the literature. A secondary motivation is to offer a more accessible proof of this important result.

\subsection{Main results and implications}

Our first goal is to reduce the cocompleteness of $\Cat$ to a simpler requirement. Thus, we first provide the following proposition (as seen in \cref{sec:handN}), as a key step toward the cococompleteness.

\begin{proposition*}
	The category $\Cat$ is cocomplete iff for every simplicial set $X \in \sSet$ a certain weighted colimit denoted $X \star [-]$ exists. 
\end{proposition*}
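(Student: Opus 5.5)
Here $X \star [-]$ is read as the colimit of the inclusion $[-] \colon \bDelta \hookrightarrow \Cat$ weighted by $X \colon \bDelta^{op} \to \Set$. It is characterised by natural bijections
\[
\Cat(X \star [-], \cC) \cong \sSet(X, N\cC).
\]
So the existence of $X \star [-]$ for every $X$ says exactly that each functor $\sSet(X, N-)$ is representable. The statement then splits into two directions.

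\textbf{($\Rightarrow$)} Over $\Set$, a weighted colimit can be computed as an ordinary colimit over the category of elements:
\[
X \star [-] \cong \operatorname{colim}\bigl(\el X \to \bDelta \to \Cat\bigr).
\]
To see this, note that a cone $[n] \to \cC$ indexed by the elements $x \in X_n$, compatible along simplicial operators, is the same thing as a natural transformation $X \Rightarrow \Cat([-],\cC) = N\cC$. Hence if $\Cat$ is cocomplete, this ordinary colimit exists and represents $\sSet(X, N-)$.

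\textbf{($\Leftarrow$)} Suppose $hX := X \star [-]$ exists for every $X$.
\begin{enumerate}
\item By the parametrised representability argument, $X \mapsto hX$ extends uniquely to a functor $h \colon \sSet \to \Cat$ with $h \dashv N$.
\item I will check that $N$ is fully faithful, so that the counit $hN\cC \to \cC$ is an isomorphism. A functor $\cC \to \cD$ is determined by its action on $1$-simplices, and functoriality is witnessed by $2$-simplices. So maps $N\cC \to N\cD$ are exactly functors $\cC \to \cD$; this is the standard argument via $2$-coskeletality of nerves.
\item Now let $F \colon J \to \Cat$ be a small diagram. Since $\sSet$ is a presheaf category, $\operatorname{colim}_J NF$ exists, computed pointwise. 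Set $L := h(\operatorname{colim}_J NF)$. Then
\[
\Cat(L, \cC) \cong \sSet(\operatorname{colim} NF, N\cC) \cong \lim_J \sSet(NF_j, N\cC) \cong \lim_J \Cat(F_j, \cC),
\]
where the last step uses full faithfulness. This is precisely the universal property of $\operatorname{colim} F$.
\end{enumerate}

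\textbf{Main obstacle.} I expect the delicate point to be step 1: assembling the objectwise representing objects into a functor $h$ with natural bijections. This is a standard Yoneda argument, but choices of representing objects must be made. A secondary point is verifying full faithfulness of $N$ cleanly from the definition of $\bDelta \hookrightarrow \Cat$ without circular appeal to reflectivity.
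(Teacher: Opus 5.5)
Your proposal is correct and follows the same route as the paper. For ($\Rightarrow$) you use the category-of-elements formula $W \star F \cong \Colim_{\el W} F\pi$. For ($\Leftarrow$) you use $h \dashv N$ with $N$ fully faithful, so that $\Colim F \cong h(\Colim NF)$. There are two small differences: you prove inline the reflective-subcategory fact that the paper cites, and you get full faithfulness from 2-coskeletality where the paper uses the spine extension property.

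Note also that your step~3 only uses the single weighted colimit $(\Colim_J NF)\star[-]$. So the functorial assembly of $h$, which you flagged as the main obstacle, is not actually needed for this statement.
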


This proposition is primarily a consequence of the characterization of pointwise left Kan extension using weighted colimits. Although the weighted colimits $X \star [-]$ themselves are not very straightforward to compute, they are cocontinuous in the weights $X \in \sSet$.  This fact is then used in conjunction with the skeletal filtration of $X$ to prove the following theorem.

\begin{theorem*}
	For all $X \in \sSet$, the weighted colimits $X \star [-]$ exist.
\end{theorem*}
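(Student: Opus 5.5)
The plan is to run an induction on the skeletal filtration of $X$, using that the construction $X \mapsto X \star [-]$ is cocontinuous in the weight wherever it is defined. Here $X \star [-]$ is the colimit of $[-] : \bDelta \to \Cat$ weighted by $X$. Equivalently, it is a category $hX$ with a bijection $\Cat(hX, \cC) \cong \sSet(X, N\cC)$, natural in $\cC$.

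\textbf{Step 1: reduce to pushouts and filtered colimits.} Write $X = \operatorname{colim}_n \sk_n X$. Each $\sk_n X$ is obtained from $\sk_{n-1} X$ by a pushout along a coproduct of boundary inclusions $\partial\Delta[n] \hookrightarrow \Delta[n]$, indexed by the nondegenerate $n$-simplices. If the weighted colimits for the pieces exist, then a colimit in $\Cat$ of them corepresents $\sSet(X, N-)$. The difficulty is that we cannot assume $\Cat$ has the relevant colimits, since that is what we are proving. So every colimit of categories we use must be built by hand.

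\textbf{Step 2: the base cases, which are explicit.}
\begin{itemize}
\item $\Delta[n] \star [-] = [n]$, by the Yoneda lemma.
\item Coproducts of weights correspond to disjoint unions of categories, which exist.
\item $\partial\Delta[n] \star [-] = [n]$ for $n \ge 3$, since $N\cC$ is 2-coskeletal.
\item $\partial\Delta[2] \star [-]$ is the free category on the triangle graph.
\item $\partial\Delta[1] \star [-]$ is the discrete category on two objects.
\end{itemize}
The general principle behind these is that a map $X \to N\cC$ depends only on $\sk_2 X$. Consequently $X \star [-]$ agrees with $\sk_2 X \star [-]$, which also disposes of the filtered colimit over $n$.

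\textbf{Step 3: construct $hX$ directly for $2$-dimensional data.} Instead of iterating pushouts, I would build the category in one step by generators and relations.
\begin{itemize}
\item Objects are the elements of $X_0$.
\item Morphisms are equivalence classes of composable paths of $1$-simplices.
\item The equivalence relation is the smallest congruence under concatenation that identifies each degenerate edge $s_0 x$ with the empty path at $x$, and identifies $d_2\sigma \cdot d_0\sigma$ with $d_1\sigma$ for each $\sigma \in X_2$.
\end{itemize}
Free categories on graphs, and quotients of them by a congruence on hom-sets, exist concretely in $\Set$. To verify the universal property, I would check two things. First, a map $X \to N\cC$ is determined by its effect on $0$- and $1$-simplices, subject to the $2$-simplex relations, together with the fact that $N\cC$ is 2-coskeletal. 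Second, such data correspond exactly to functors out of the quotient, by the universal properties of free categories and quotient categories.

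\textbf{Main obstacle.} The delicate point is the coskeletality claim: a compatible assignment on $\sk_2 X$ must extend uniquely to all of $X$. For $n \ge 3$, any boundary $\partial\Delta[n] \to N\cC$ has a unique filler, because composition in $\cC$ is associative and every higher simplex of $N\cC$ is determined by its spine. I would prove this through the spine characterization of the nerve. Then I would induct up the skeletal pushouts, using only the existence and uniqueness of fillers, so that no colimit in $\Cat$ is ever assumed.
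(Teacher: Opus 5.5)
Your proof is correct. It shares the paper's reduction to $\sk_2 X$: both arguments rest on $N\cC$ being $2$-coskeletal. After that reduction, though, it takes a genuinely different route.

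The paper pushes the $2$-skeletal filtration through $-\star[]$ using cocontinuity in the weight. It computes the pieces ($\partial\Delta^1\star[]$, $\partial\Delta^2\star[]$, $\Delta^k\star[]\cong[k]$, coproducts) by hand. It then constructs the two resulting pushouts in $\Cat$ one at a time, checking each universal property separately: first $FX$, free on the nondegenerate edges, then $hX$, the quotient of $FX$ by the relations from nondegenerate $2$-simplices.

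You instead write down the presentation of $hX$ at once and verify $\Cat(hX,\cC)\cong\sSet(X,N\cC)$ directly. You do this by matching the universal properties of free categories and congruence quotients against a description of maps $X\to N\cC$. Namely, these are maps of underlying reflexive graphs satisfying $f(d_1\sigma)=f(d_0\sigma)\circ f(d_2\sigma)$ for all $\sigma\in X_2$. Beyond coskeletality, the only thing to write out is that $\sigma\mapsto(f(d_2\sigma),f(d_0\sigma))$ commutes with $d_1$, $s_0$, $s_1$. This is where your two relations enter. The relation identifying $s_0x$ with the empty path is really needed: without it, $h\Delta^0$ would acquire a spurious idempotent.

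Consequently your Steps 1 and 2 are scaffolding that the final argument never uses. This includes the base cases and $\partial\Delta^n\star[]\cong[n]$ for $n\geq 3$.

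Your route is shorter and treats degenerate simplices uniformly. It also avoids both cocontinuity in the weight and the check that the maps between auxiliary colimits are the expected ones. The paper's route fits its programme of assembling $X\star[]$ cell by cell from the filtration. It uses the smaller presentation indexed by nondegenerate simplices. It also isolates $\sk_1X\star[]\cong FX$ as the free category, which it reuses in Section~5 for the adjunction $h(\Sk_1-)\dashv(N-)_{\leq 1}$.
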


Thereafter, we recover the well established pair of adjoints $h \dashv N$ between $\sSet$ and $\Cat$. Combined with another standard result regarding reflective embeddings\cite[Prop.~4.5.15]{reihlcat}, this ensures that $\Cat$ is both complete and cocomplete (see \cref{sec:ladj}).

\begin{theorem*}
	The category $\Cat$ is both complete and cocomplete.
\end{theorem*}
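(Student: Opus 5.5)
We deduce the statement from the preceding two results. By the second theorem, the weighted colimit $X \star [-]$ exists for every $X \in \sSet$. We set $hX := X \star [-]$, where $[-] : \bDelta \hookrightarrow \Cat$ is the canonical inclusion. By the universal property of weighted colimits over $\Set$, there are isomorphisms
\[
\Cat(hX, \cC) \cong \sSet\bigl(X, \Cat([-],\cC)\bigr) = \sSet(X, N\cC),
\]
natural in $\cC$. Naturality in $X$ follows because $X \mapsto X\star[-]$ is functorial once all these colimits exist. This gives the adjunction $h \dashv N$.

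Next we check that $N$ is fully faithful, so that the adjunction exhibits $\Cat$ as a reflective subcategory of $\sSet$. A simplicial map $N\cC \to N\cD$ is determined by its action on $0$- and $1$-simplices. Compatibility with the face maps on $2$-simplices forces it to preserve composition, and compatibility with degeneracies forces it to preserve identities. Hence $\Cat(\cC,\cD) \to \sSet(N\cC, N\cD)$ is a bijection. This is standard and only needs the Segal-type description of $N\cC$ by composable strings.

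Since $\sSet = \Set^{\bDelta^{op}}$ is complete and cocomplete pointwise, we invoke \cite[Prop.~4.5.15]{reihlcat}: a reflective full subcategory of a complete and cocomplete category is itself complete and cocomplete. Concretely, for a diagram $F : J \to \Cat$:
\begin{itemize}
	\item Its colimit is $h(\operatorname{colim} NF)$, computed using $h \dashv N$ and full faithfulness of $N$.
	\item Its limit is obtained by noting that $\lim NF$ (computed pointwise) lies in the essential image of $N$, since reflective subcategories are closed under limits. Alternatively, one checks directly that the pointwise limit of nerves satisfies the unique-filler condition characterizing nerves.
\end{itemize}

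There is one point to watch so as not to repeat the circularity flagged in the introduction: the proof of \cite[Prop.~4.5.15]{reihlcat} must use only the existence of the left adjoint, and not cocompleteness of $\Cat$. It does, and the left adjoint is supplied by the second theorem. Apart from this check, the remaining verifications are routine, so the argument reduces to assembling these pieces.
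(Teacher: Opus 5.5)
Your proposal is correct and follows essentially the same route as the paper. The weighted colimits $X \star [-]$ assemble into a left adjoint $h \dashv N$, and $N$ is fully faithful; the paper derives this from the spine extension property, which is the Segal-type description you invoke. Hence $\Cat$ is reflective in $\sSet$, and \cite[Prop.~4.5.15]{reihlcat} then gives completeness and cocompleteness, with $\Lim F \cong h(\Lim N \circ F)$ and $\Colim F \cong h(\Colim N \circ F)$; your check that this proposition uses only the adjunction and full faithfulness, not cocompleteness of $\Cat$, is exactly the non-circularity point the paper stresses.
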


For an immediate implication, we ask what other relevant adjunctions are derivable directly from the $h \dashv N$ adjunction. As seen in \cref{sec:deradj}, one such example is a quadruple of adjoints between $\Cat$ and $\Set$, derived from a similar such quadruple between $\sSet$ and $\Set$. Another relevant example turns out to be the usual free category adjunctions on graphs or reflexive graphs. Thereafter, \cref{lemma:coeqcat} provides an explicit description of the coequalizers in $\Cat$. Finally, we reinterpret localizations in $\Cat$ using the $h \dashv N$ adjunction to describe the desired defining colimits.

\subsection{Overview}

In Section 2, we briefly state the relevant well-known results on the Grothendieck construction, left Kan extensions, weighted colimits, and the nerve--realization pair. We only include them for accessibility and ease of reference. An advanced reader may safely glance over this section apart from the \cref{lemma:wlimislim}.

In Section 3, we specialize to the nerve--realization pair for simplicial sets and categories. In particular, this section studies only the existence of certain weighted colimits and their interaction with the $2$-skeleton functor on simplicial sets. The key point of this section is to ensure that no circular reasoning arises from the properties used in our main proof.

Section 4 states and proves the main results that establish the existence of the desired nerve--realization pair. An advanced reader may focus solely on this section if they are confident that all the tools used are free from any circular assumptions. However, this point is not entirely trivial.

Section 5 first discusses other relevant adjunctions between $\Cat$ and $\Set$ derived from the nerve--realization pair using the $0,1$-skeleton adjunction on $\sSet$.Then, it derives an explicit description of the coequalizers in $\Cat$. Finally, it reformulates the localizations construction in the category $\Cat$ using colimits.

\subsection{Notation and basic results}

\begin{enumerate}[leftmargin=*,labelsep=0.5em]
	\item The category $\Set$ is the category whose objects are all sets, and whose morphisms are functions between them. The category $\Set$ is complete and cocomplete.
	
	\item The category $\Cat$ is the category whose objects are all \emph{small} categories, and whose morphisms are functors between them.
	
	\item The simplex category $\bDelta$ is the full subcategory of $\Cat$ on the categories which are the linearly ordered sets $[n]=\{0 \rightarrow 1 \rightarrow \cdots \rightarrow n \}$ for all $n\geq 0$.
	
	\item The category $\sSet \vcentcolon= \Set^{\bDelta^{\op}}$ is the category of simplicial sets. It is complete and cocomplete, with limits and colimits computed pointwise. The same is true of $\Set^{\cC^{\op}}$ for any small category $\cC$. 
	
	\item The cartesian product $\cC \times \cD$ of two categories is defined by: $\obj(\cC \times \cD) = \{(c,d) \mid c \in \cC, d \in \cD\}$, and $\mor(\cC \times \cD)=\{(f , g) \mid f \in \mor \cC, g \in \mor \cD\}$. The product $\cC \times \cD$ then forms a category with componentwise composition and identities. Furthermore, it satisfies the universal property of a categorical product and extends to a functor $- \times - : \Cat \times \Cat \rightarrow \Cat$.
	
	\item The Yoneda embedding is denoted as the functor \footnote{$\Yo$ is the Japanese character for \enquote{yo} in Hiragana syllabary. This symbol also looks like \enquote{Y} in the English alphabet.} $\Yo: \cC \rightarrow \Set^{\cC^{\op}}$, which is given objectwise by $c \mapsto \cC(-,c)$. And, it satisfies the following natural isomorphism defined by $\alpha \mapsto \alpha_{c}(1_c)$:
	\[\begin{tikzcd}[column sep=scriptsize]
		&&& {\left(\Set^{\cC^{\op}}\right)^{\op} \times \Set^{\cC^{\op}}} \\
		{(c,X)} &&&&&& {\Set^{\cC^{\op}}(\Yo(c),X)} \\
		& {\cC^{\op} \times \Set^{\cC^{\op}}} &&&& \Set \\
		&&&&&& {X(c)}
		\arrow["{\Set^{\cC^{\op}}(-,-)}", from=1-4, to=3-6]
		\arrow["\in"{description}, dashed, from=2-1, to=3-2]
		\arrow["\in"{description}, dashed, from=2-7, to=3-6]
		\arrow["{\alpha \mapsto \alpha_c(1_c)}", from=2-7, to=4-7]
		\arrow["{\Yo^{\op} \times 1}", from=3-2, to=1-4]
		\arrow[""{name=0, anchor=center, inner sep=0}, "{\mathrm{eval}(-,-)}"', from=3-2, to=3-6]
		\arrow["\in"{description}, dashed, from=4-7, to=3-6]
		\arrow["\cong"', between={0}{0.8}, Rightarrow, from=1-4, to=0]
	\end{tikzcd}\]
	Here, the functor $\mathrm{eval}(-,-)$ is defined by sending $(c,X) \mapsto X(c)$. In particular, we have the isomorphism $\Set^{\cC^{\op}}(\Yo(c),X)\cong X(c)$ \footnote{We can use suitable enlargement of the universe to define $\Set^{\cC^{\op}}$, if necessary. However, the natural isomorphism itself holds true with or without this enlargement.},  which is natural in both $c \in \cC$ and $X \in \Set^{\cC^{\op}}$. Moreover, $\Yo$ is fully faithful \cite[p.~59--60]{reihlcat}.

\end{enumerate}

	\section{Preliminaries}
	The primary goal of this section is to serve as a convenient reference. Most of these definitions and results are well known, but we hope that our presentation still provides valuable insights and increases the accessibility of this paper.

\subsection{Category of elements}

Any function $f: X \rightarrow Y \in \Set$ can be transformed into a discrete functor $\bar{f} : Y \rightarrow \Set$ by considering fibers over each point $y \in Y$. This transformation is completely reversible by bundling the codomain of $\bar{f}$ using coproducts. In particular, there is a bijection between functions $f : X \rightarrow Y \in \Set$ with fixed $Y \in \Set$ and discrete functors $\bar{f} : Y \rightarrow \Set$. A natural generalization is to ask whether arbitrary functors $F : \cC \rightarrow \Cat$ or $F: \cC^{\op} \rightarrow \Cat$ can be bundled in some way. The \emph{Grothendieck construction} answers this question in the positive. Our requirements only include functors of the form $F : \cC \rightarrow \Set$ or $F : \cC^{\op} \rightarrow \Set$. In this situation, the Grothendieck construction is also known as the \emph{Category of elements} construction.

\begin{definition}[Category of elements]\label{defn:grconst} 
	\begin{enumerate}[leftmargin=*,labelsep=.5em,itemsep=0.5em,topsep=0em, parsep=0.25em]
		\item[]
		\item For all $F : \cC \rightarrow \Set$, the (covariant) category of elements $\el F$ is a category defined by:
		\begin{description}
			\item[a) $\obj(\el F)$] the collection $\{(*\xrightarrow{x} F(c), c) : c \in \cC, x \in F(c)\}$,  
			\item[b) $\mor(\el F)$] any morphism $(* \xrightarrow{x} F(c),c) \rightarrow (*\xrightarrow{y} F(d),d) \in \el F$ is given by a morphism $f: c \rightarrow d \in \cC$ that satisfy $F(f)(x)=y$.
		\end{description}
		
		\item For all $F : \cC^{\op} \rightarrow \Set$, the (contravariant) category of elements $\el F$ is a category defined by:
		\begin{description}
			\item[a) $\obj(\el F)$] the collection $\{(*\xrightarrow{x} F(c), c) : c \in \cC, x \in F(c)\}$,
			\item[b) $\mor(\el F)$] any morphism $(* \xrightarrow{x} F(c),c) \rightarrow (*\xrightarrow{y} F(d),d) \in \el F$ is given by a morphism $f: c \rightarrow d \in \cC$ that satisfy $F(f)(y)=x$.
		\end{description}
	\end{enumerate}
	Both of these $\el F$ categories come with a canonical projection map $\el F \rightarrow \cC$. 
\end{definition}

A routine check shows that both the constructions $\el$ are functorial in the respective functor category.

\begin{lemma}[{\cite[Lemma.~10.3.5]{yau2cat2021}}] \label{lemma:grfunct}
	The covariant Grothendieck construction $\el$ defines a functor $\el : \Set^{\cC} \rightarrow \Cat/\cC$. The action on transformations $\alpha : F \Rightarrow G$ is given by defining $\el \alpha : \el F \rightarrow \el G$ elementwise using $\el \alpha (c,x) = (c,\alpha_{c}(x))$. A similar lemma holds for contravariant Grothendieck construction $\el : \Set^{\cC^{\op}} \rightarrow \Cat/\cC$.
\end{lemma}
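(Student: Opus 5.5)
The plan is to check the three ingredients of a functor $\el : \Set^{\cC} \rightarrow \Cat/\cC$ directly from \cref{defn:grconst}. First, for each $F$, the category $\el F$ together with its projection $\pi_F : \el F \rightarrow \cC$, $(x,c) \mapsto c$, is an object of $\Cat/\cC$. Second, each natural transformation $\alpha : F \Rightarrow G$ gives a functor $\el\alpha$ over $\cC$. Third, this assignment preserves identities and composition.

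For the first step, I will define composition in $\el F$ by composition in $\cC$. If $f : (x,c) \rightarrow (y,d)$ and $g : (y,d) \rightarrow (z,e)$, then functoriality of $F$ gives
\[
F(gf)(x) = F(g)(F(f)(x)) = F(g)(y) = z,
\]
so $gf : (x,c) \rightarrow (z,e)$ is again a morphism of $\el F$. Likewise, $1_c$ is a morphism $(x,c) \rightarrow (x,c)$ because $F(1_c) = 1_{F(c)}$. Associativity and the unit laws are inherited from $\cC$, and $\pi_F$ is a functor essentially by construction. One technical point needs care: the same $f$ may serve as a morphism between several pairs of objects. I will therefore formally regard a morphism of $\el F$ as the triple consisting of its source, its target and $f$, so that hom-sets are disjoint. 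This is routine.

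For the second step, I define $\el\alpha$ on objects by $(x,c) \mapsto (\alpha_c(x), c)$ and on morphisms by sending $f$ to $f$. The one thing to verify is that $f : (x,c) \rightarrow (y,d)$ is sent to a legitimate morphism $(\alpha_c(x),c) \rightarrow (\alpha_d(y),d)$ of $\el G$, that is, that $G(f)(\alpha_c(x)) = \alpha_d(y)$. Naturality of $\alpha$ gives
\[
G(f)(\alpha_c(x)) = \alpha_d(F(f)(x)) = \alpha_d(y),
\]
which is what we need. Preservation of composition and identities by $\el\alpha$ is immediate, since it acts as the identity on the underlying morphisms of $\cC$. The same observation gives $\pi_G \circ \el\alpha = \pi_F$, so $\el\alpha$ is a morphism of $\Cat/\cC$. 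I expect this naturality check to be the only step with real content, although it is itself a one-line verification.

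For the third step, functoriality follows from componentwise formulas: $\el(1_F)$ is the identity because $(1_F)_c = 1_{F(c)}$, and $\el(\beta\alpha) = \el\beta \circ \el\alpha$ because $(\beta\alpha)_c = \beta_c\alpha_c$. The contravariant version is handled the same way by applying the covariant argument to $F : \cC^{\op} \rightarrow \Set$. The only changes are that morphism $f : c \rightarrow d$ of $\cC$ from $(x,c)$ to $(y,d)$ now require $F(f)(y) = x$, and the naturality check becomes
\[
F'(f)(\alpha_d(y)) = \alpha_c(F(f)(y)) = \alpha_c(x),
\]
for $\alpha : F \Rightarrow F'$, which again shows that $\el\alpha$ is a functor over $\cC$.
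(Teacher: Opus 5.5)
Your proposal is correct and matches what the paper does. The paper gives no argument beyond calling this ``a routine check'' and citing Yau, and your direct verification is exactly that check: closure of composition via functoriality of $F$, well-definedness of $\el\alpha$ via naturality of $\alpha$, and functoriality from $(\beta\alpha)_c=\beta_c\alpha_c$.

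One small wording point: applying the covariant construction verbatim to $F:\cC^{\op}\to\Set$ would produce $(\el F)^{\op}$ over $\cC^{\op}$. It is your explicit redo with the condition $F(f)(y)=x$ that yields the contravariant functor $\Set^{\cC^{\op}}\to\Cat/\cC$, with closure under composition now using $F(gf)=F(f)\circ F(g)$.
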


\begin{example}[representable functors]
	The category of elements of the representable functor $\cC(-,c)$ is $\cC/c$, whereas, for the corepresentable functor $\cC(c,-)$, it is $c/\cC$. Notice that these categories have the terminal and initial object respectively. In a way, the existence of these objects is equivalent to the assertion of the Yoneda Lemma. 
\end{example}

Another important use of the category of elements construction is for simplicial sets. Any simplicial set $X$ is a functor $X : \bDelta^{\op} \rightarrow \Set$. Hence, we can compute it's category of elements $\el X$. This category is usually called the \emph{category of simplices} of $X$.

\begin{example}[category of simplices]
	For any simplicial set $X$, the category of elements $\el X$ has:
	\begin{description}
		\item[$\obj(\el X)$] - all simplicial maps $\Delta^n \rightarrow X$ for $n\geq 0$,
		\item[$\mor(\el X)$] - all commuting triangles 
		\[\begin{tikzcd}
			n && m \\
			{\Delta^n} && {\Delta^m} \\
			\\
			& X
			\arrow["f", from=1-1, to=1-3]
			\arrow["{\Delta^f}", from=2-1, to=2-3]
			\arrow[from=2-1, to=4-2]
			\arrow[from=2-3, to=4-2]
		\end{tikzcd} \cdpunct[8pt]{.}\]
	\end{description} 
\end{example}
 
\subsection{Weighted limits}

Weighted limits and colimits generalize ordinary limits and colimits in a canonical way and provide simple formulas for plenty of constructions in category theory. In our case, they provide relevant formulas for computing \emph{Kan extensions}. We mainly present the theory of weighted colimits and derive the theory of weighted limits using duality.

\begin{definition}[Weighted colimit]\label{defn:wcolim}
	Let $W : \cC^{\op} \rightarrow \Set$ and $F :\cC \rightarrow \cD$ be two arbitrary functors. The weighted colimit $W \star F$, if it exists, is defined by the following natural isomorphism in $d \in \cD$:
	\[\cD(W \star F, d) \cong \Set^{\cC^{\op}}(W, \cD(F(-),d)) \cdpunct{.}\]
	The functor $W$ is called the \emph{weight functor} for the weighted colimit $W \star F$.
\end{definition}

\begin{remark}
	The natural isomorphism is itself a part of the data that defines the weighted colimit $W \star F$. In particular, the image of the map $1_{W \star F} : W \star F \rightarrow W \star F$, say $\lambda_{W \star F} \in \Set^{\cC^{\op}}(W,\cD(F(-),W\star F))$ is also necessarily a part of this data. In fact, the Yoneda lemma implies that a weighted colimit is completely specified by the pair $(W \star F, \lambda_{W \star F})$, which together are a \emph{universal object} $W \star F$ and a \emph{universal weighted cocone} $\lambda_{W \star F}$. 
\end{remark}

This definition of weighted colimit is given in terms of a natural isomorphism. Hence, whenever all the weighted colimits exist, they extend to a $2$-ary functor $- \star - : \Set^{\cC^{\op}} \times \cD^{\cC} \rightarrow \cD$ using the Yoneda lemma. This $2$-ary functor is also cocontinuous in the weights argument (the left input). This again follows from the defining natural isomorphism. 

\begin{lemma}
	Whenever all the desired weighted colimits exist, there is a canonical way to assemble them into a $2$-ary functor $- \star - : \Set^{\cC^{\op}} \times \cD^{\cC} \rightarrow \cD$, which is cocontinuous in its weights argument. That is, the canonical comparison map $\Colim\limits_{i} (W_i \star F) \rightarrow (\Colim\limits_{i} W_i) \star F$ is an isomorphism. 
\end{lemma}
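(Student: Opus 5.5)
The plan is to build everything from the defining natural isomorphism of \cref{defn:wcolim} together with the Yoneda lemma. There are two steps: first make $- \star -$ functorial, then prove cocontinuity in the weight by a chain of natural isomorphisms.

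\emph{Functoriality.} Suppose we have a morphism $(\phi, \theta) : (W, F) \to (W', F')$ in $\Set^{\cC^{\op}} \times \cD^{\cC}$, so $\phi : W \Rightarrow W'$ and $\theta : F \Rightarrow F'$. We build a natural transformation in $d$,
\[\Set^{\cC^{\op}}(W', \cD(F'(-),d)) \longrightarrow \Set^{\cC^{\op}}(W, \cD(F(-),d)) ,\]
by precomposing with $\phi$ and then postcomposing with $\cD(\theta_{-}, d) : \cD(F'(-),d) \Rightarrow \cD(F(-),d)$. Through the defining isomorphisms this becomes a natural transformation $\cD(W' \star F', -) \Rightarrow \cD(W \star F, -)$. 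Since Yoneda is fully faithful, it corresponds to a unique morphism $\phi \star \theta : W \star F \to W' \star F'$. Preservation of identities and composites then follows from faithfulness of $\Yo$, because precomposition and postcomposition are themselves functorial. This morphism is also the unique map carrying the universal weighted cocone $\lambda_{W \star F}$ to the cocone $\cD(\theta, W'\star F') \circ \lambda_{W'\star F'} \circ \phi$.

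\emph{Cocontinuity.} Fix $F$ and a diagram $i \mapsto W_i$ with colimit $W = \Colim_i W_i$, computed pointwise in $\Set^{\cC^{\op}}$. Assume $\Colim_i (W_i \star F)$ exists in $\cD$; this is the implicit hypothesis behind the phrase ``the canonical comparison map''. The comparison map is induced by the maps $\iota_i \star 1_F : W_i \star F \to W \star F$, where $\iota_i : W_i \to W$ are the colimit injections. For every $d$ we then have the chain
\[\cD\bigl(\Colim_i (W_i \star F), d\bigr) \cong \lim_i \cD(W_i \star F, d) \cong \lim_i \Set^{\cC^{\op}}(W_i, \cD(F(-),d)) \cong \Set^{\cC^{\op}}\bigl(\Colim_i W_i, \cD(F(-),d)\bigr) \cong \cD\bigl(W \star F, d\bigr) .\]
The first and third isomorphisms come from the universal property of colimits, since hom functors send colimits in the first variable to limits. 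The second and fourth are the defining isomorphisms of the weighted colimits. Each isomorphism is natural in $d$, so the composite is natural in $d$ and Yoneda gives $\Colim_i (W_i \star F) \cong W \star F$.

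The step I expect to need the most care is checking that this abstract isomorphism is actually the canonical comparison map, and not just some other isomorphism. To verify it, I would trace $1_{W\star F}$ backwards through the chain. Naturality of the defining isomorphism in the weight (which is how $\phi \star \theta$ was built) shows that the $i$-th component of the resulting cone is exactly $\iota_i \star 1_F$. So the map obtained by Yoneda is the one induced by the $\iota_i \star 1_F$, which is the comparison map, and it is therefore an isomorphism. A side remark: if we only assume $W \star F$ exists, the same chain shows that $W \star F$ with the maps $\iota_i \star 1_F$ is itself a colimit of the $W_i \star F$.
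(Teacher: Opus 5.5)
Your proposal is correct and takes the same route as the paper. The paper only sketches the argument in a remark: it builds functoriality from the defining natural isomorphism via the Yoneda lemma, and gets cocontinuity in the weight from that same isomorphism. You supply the details the paper leaves implicit, namely the naturality of the defining isomorphism in the weight, the check that the Yoneda-produced isomorphism is the canonical comparison map induced by the $\iota_i \star 1_F$, and the observation that $W \star F$ is itself a colimit of the $W_i \star F$.
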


The Yoneda embedding acts as a unit for the functor $- \star - $ in two distinct ways which provide an important class of examples for weighted colimits. 

The first form of unitality is where $\Yo$, as a whole, acts as the right unit and is demonstrated by the following lemma.

\begin{lemma}
	For every functor $W : \cC^{\op} \rightarrow \Set$, there exists a natural isomorphism $W \star \Yo \cong W$.  
\end{lemma}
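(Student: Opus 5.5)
To prove $W \star \Yo \cong W$ I will show directly that $W$ itself, viewed as an object of $\Set^{\cC^{\op}}$, satisfies the defining universal property of \cref{defn:wcolim} for the pair $(W,\Yo)$. Here $\cD = \Set^{\cC^{\op}}$ and $F = \Yo$. So I must produce an isomorphism
\[\Set^{\cC^{\op}}(W, X) \cong \Set^{\cC^{\op}}\bigl(W, \Set^{\cC^{\op}}(\Yo(-),X)\bigr)\]
that is natural in $X \in \Set^{\cC^{\op}}$.

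The key input is the Yoneda isomorphism recalled in the notation section: there is an isomorphism $\Set^{\cC^{\op}}(\Yo(c),X)\cong X(c)$, given by $\alpha \mapsto \alpha_c(1_c)$, which is natural in both $c\in\cC$ and $X$. Fixing $X$ and using naturality in $c$, this gives an isomorphism of presheaves
\[\Set^{\cC^{\op}}(\Yo(-),X) \cong X .\]
Postcomposition with this isomorphism yields a bijection
\[\Set^{\cC^{\op}}\bigl(W,\Set^{\cC^{\op}}(\Yo(-),X)\bigr)\cong \Set^{\cC^{\op}}(W,X).\]
Naturality of the Yoneda isomorphism in $X$ then makes this bijection natural in $X$. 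This is exactly the defining isomorphism, so $W$ together with this isomorphism is a choice of $W \star \Yo$. By the uniqueness of universal objects, any other choice is canonically isomorphic to it.

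For naturality in $W$, I would unwind the universal weighted cocone. It is the image of $1_W$, namely the transformation $\lambda: W \Rightarrow \Set^{\cC^{\op}}(\Yo(-),W)$ that sends $w\in W(c)$ to the map $\Yo(c)\to W$ classifying $w$. A map $\varphi: W\to W'$ induces $W\star\Yo\to W'\star\Yo$ through $\lambda$, and one checks that under the identification above this induced map is $\varphi$ itself. The check is just Yoneda naturality once more, applied to $\varphi\circ(\text{classifying map of } w)$, which classifies $\varphi_c(w)$.

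The only delicate point is bookkeeping. We must verify that the two-variable naturality of the Yoneda isomorphism gives both an isomorphism of presheaves in $c$ and naturality in $X$ simultaneously, with no size issue. The footnote on enlarging the universe covers the latter. Everything else is formal.
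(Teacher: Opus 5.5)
Your proposal is correct and is essentially the paper's argument: the Yoneda isomorphism $\Set^{\cC^{\op}}(\Yo(-),X)\cong X$, transported through $\Set^{\cC^{\op}}(W,-)$, shows that $W$ satisfies the defining universal property of $W\star\Yo$. Your explicit check of naturality in $W$ via the universal weighted cocone spells out what the paper only asserts, namely that the isomorphism is natural in both $W$ and $X$.
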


\begin{proof}
	The Yoneda lemma implies the existence of the following isomorphism natural in $W$ and $X$:
	\[\begin{tikzcd}
		{\Set^{\cC^{\op}}(W,\Set^{\cC^{\op}}(\Yo,X))} && {\Set^{\cC^{\op}}(W,X)}
		\arrow["\cong", from=1-1, to=1-3]
	\end{tikzcd}\cdpunct{.}\]
	This implies that $W$ satisfies the universal property of $W \star \Yo$ as desired.
\end{proof}

The second unitality is symmetric and uses the representable functors $\Yo(c)= \cC(-,c) : \cC^{\op} \rightarrow \Set$, taken once as a weight functor and once a functor whose weighted colimit is being computed. This is again directly verified using the Yoneda isomorphism and the cartesian closure of $\Set$.

\begin{lemma}[Left unit]
	For every functor $F: \cC \rightarrow \cD$, there exists an isomorphism $\Yo(c) \star F \cong F(c)$, which is natural in both $c$ and $F$.
\end{lemma}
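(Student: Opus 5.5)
We verify directly that $F(c)$, together with a suitable weighted cocone, satisfies the defining universal property of $\Yo(c) \star F$ from \cref{defn:wcolim}. That is, for every $d \in \cD$ we exhibit an isomorphism
\[\cD(F(c),d) \cong \Set^{\cC^{\op}}\bigl(\Yo(c), \cD(F(-),d)\bigr)\]
natural in $d$. The key observation is that the weighted-colimit hom-set on the right is a set of natural transformations out of a representable. The Yoneda isomorphism from the notation section therefore applies directly. It is applied to the presheaf $X = \cD(F(-),d) : \cC^{\op} \rightarrow \Set$, which gives $\Set^{\cC^{\op}}(\Yo(c), \cD(F(-),d)) \cong \cD(F(c),d)$ via $\alpha \mapsto \alpha_c(1_c)$. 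Its inverse sends $g : F(c) \rightarrow d$ to the transformation whose component at $c'$ is $(f : c' \rightarrow c) \mapsto g \circ F(f)$.

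The steps are as follows.

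\begin{enumerate}
\item Apply the Yoneda isomorphism with $X = \cD(F(-),d)$ to obtain the bijection above for each $d$.

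\item Check naturality in $d$. A morphism $h : d \rightarrow d'$ acts on the right side by postcomposition, so the induced map of presheaves is $h_* : \cD(F(-),d) \Rightarrow \cD(F(-),d')$. Naturality of the Yoneda isomorphism in its presheaf argument $X$ then shows that the bijection commutes with $h \circ -$ on the left. This exhibits $F(c)$ as $\Yo(c) \star F$. The universal weighted cocone is the image of $1_{F(c)}$, namely $\lambda_{c'}(f) = F(f)$.

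\item Establish naturality in $c$. For $u : c \rightarrow c'$, we have $\Yo(u) : \Yo(c) \Rightarrow \Yo(c')$. The map $\Yo(c) \star F \rightarrow \Yo(c') \star F$ is induced by functoriality of $- \star F$ in the weight, and under the identification it must be $F(u)$. To see this, precompose the universal cocone of $F(c')$ with $\Yo(u)$. This gives the cocone $f \mapsto F(u \circ f) = F(u) \circ F(f)$, which corresponds under Yoneda to $F(u)$. The same fact follows from naturality of the Yoneda isomorphism in its representable argument.

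\item Establish naturality in $F$. A transformation $\theta : F \Rightarrow G$ induces $\Yo(c) \star F \rightarrow \Yo(c) \star G$ by pushing the cocone along $\theta$. The resulting cocone is $f \mapsto \theta_c \circ F(f) = G(f) \circ \theta_c$, which corresponds to $\theta_c$.
\end{enumerate}

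The main obstacle is bookkeeping rather than substance. One must identify precisely the functorial action of $- \star -$ in each variable, as defined via the Yoneda lemma in the preceding discussion, and confirm that it agrees with $F(u)$ and $\theta_c$. The cleanest route is to compare the universal cocones, since by the remark after \cref{defn:wcolim} a morphism out of a weighted colimit is determined by the cocone it induces. If a size issue arises because $\cD(F(-),d)$ is not small-valued, the footnote on enlarging the universe covers this, and the isomorphism itself is unaffected.
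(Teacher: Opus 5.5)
Your proposal is correct and is essentially the paper's argument: the paper gives no separate proof beyond noting that the claim is verified directly from the Yoneda isomorphism, which is exactly what you do by applying Yoneda to the presheaf $\cD(F(-),d)$ and then checking naturality through the universal cocones. One small slip: in step 4 the pushed cocone is $f \mapsto \theta_c \circ F(f) = G(f)\circ\theta_{c'}$ for $f : c' \rightarrow c$, not $G(f)\circ\theta_c$ (which does not typecheck), but your conclusion that this cocone corresponds to $\theta_c$ is unaffected.
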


\begin{lemma}[Right unit]
	For every functor $W : \cC \rightarrow \Set$, there exists an isomorphism $W(c)  \cong W \star \Yo(c) $, which is natural in both $c$ and $W$.
\end{lemma}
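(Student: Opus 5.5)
The plan is to verify the defining universal property of \cref{defn:wcolim} directly, with $W(c)$ as the candidate universal object. First I fix the indexing. Here the diagram is $\Yo(c)=\cC(-,c) : \cC^{\op}\rightarrow \Set$, so the indexing category is $\cC^{\op}$. The weight is therefore a functor $(\cC^{\op})^{\op}=\cC\rightarrow\Set$, which is exactly $W$. The target category is $\Set$. So I must produce a bijection
\[\Set(W(c),S)\;\cong\;\Set^{\cC}\big(W,\ \Set(\cC(-,c),S)\big),\]
natural in $S\in\Set$. Here $x\mapsto \Set(\cC(x,c),S)$ is covariant in $x$: a morphism $f:x\rightarrow y$ acts by precomposition with $\cC(f,c):\cC(y,c)\rightarrow\cC(x,c)$.

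Next I use the cartesian closure of $\Set$ to curry. A transformation $\alpha : W\Rightarrow \Set(\cC(-,c),S)$ is the same thing as a family of maps $\varphi_x : W(x)\times\cC(x,c)\rightarrow S$. Naturality of $\alpha$ in $x$ becomes the condition
\[\varphi_y(W(f)w,\,h)=\varphi_x(w,\,h\circ f)\qquad\text{for all } f:x\rightarrow y,\ w\in W(x),\ h:y\rightarrow c.\]
The map in one direction sends $\alpha$ to $\psi:=\varphi_c(-,1_c):W(c)\rightarrow S$. The map in the other direction sends $\psi$ to the family $\varphi_x(w,g):=\psi(W(g)w)$.

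The condition above with $y=c$ and $h=1_c$ gives $\varphi_x(w,g)=\varphi_c(W(g)w,1_c)$, so the two assignments are mutually inverse. It remains to check that $\varphi_x(w,g):=\psi(W(g)w)$ really satisfies the naturality condition. This follows from functoriality of $W$:
\[\psi(W(h)W(f)w)=\psi(W(h\circ f)w).\]
Naturality of the bijection in $S$ is immediate, since postcomposing with a map $S\rightarrow S'$ commutes with evaluating at $(-,1_c)$. This establishes $W(c)\cong W\star\Yo(c)$. The universal weighted cocone is the one corresponding to $1_{W(c)}$, namely $\lambda_x(w)(g)=W(g)w$.

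Finally comes naturality in $c$ and in $W$. For $W$, I will note that for a transformation $\beta:W\Rightarrow W'$ the induced map on weighted colimits is characterized by Yoneda, via precomposition with $\beta$ on the right-hand side. Under the bijection this corresponds to precomposition with $\beta_c$. For $u:c\rightarrow c'$, the functoriality of $\star$ in the diagram argument is induced by postcomposition $\cC(-,u)$. Tracing through the formula $\psi\mapsto \psi\circ W(-)$ shows that this corresponds to $W(u):W(c)\rightarrow W(c')$.

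I expect the main obstacle to be bookkeeping rather than substance. Two points need care: getting the variances right (the weight lives on $\cC$ while the diagram lives on $\cC^{\op}$), and making explicit which canonical maps define the functoriality of $-\star-$ in each argument, so that the naturality squares can be checked elementwise. The core bijection is just the co-Yoneda argument above.
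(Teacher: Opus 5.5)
Your proposal is correct and takes essentially the paper's route. The paper only remarks that the lemma is verified directly from the cartesian closure of $\Set$ and the Yoneda isomorphism, and your currying to families $\varphi_x : W(x)\times\cC(x,c)\rightarrow S$ followed by evaluation at $(-,1_c)$ is exactly that argument written out, including the naturality checks in $W$ and $c$ that the paper leaves implicit.
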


We now briefly discuss the dual theory of weighted limits. The dual definition and properties of weighted limits are analogous to that of weighted colimits.

\begin{definition}[Weighted limit]
	Let $W : \cC \rightarrow \Set$ and $F :\cC \rightarrow \cD$ be two arbitrary functors. The weighted limit $\{W,F\}$, if it exists, is defined by the following natural isomorphism in $d \in \cD$:
	\[\cD(d,\{W,F\}) \cong \Set^{\cC}(W,\cD(d,F(-)))\cdpunct{.}\]
\end{definition}

\begin{corollary}
	When all the weighted limits exist, they assemble into a $2$-ary functor $\{-,-\} : (\Set^{\cC})^{\op} \times \cD^{\cC} \rightarrow \cD$, which is cocontinuous in the weights argument. That is, the canonical comparison map $\{\Colim\limits_{i} W_i, F\} \rightarrow \Lim\limits_{i} \{W_i,F\}$ is an isomorphism.
\end{corollary}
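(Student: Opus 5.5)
The plan is to deduce this corollary from the earlier lemma on weighted colimits by passing to the opposite category $\cD^{\op}$. A functor $F : \cC \rightarrow \cD$ is the same as a functor $F^{\op} : \cC^{\op} \rightarrow \cD^{\op}$. A weight $W : \cC \rightarrow \Set$ is the same as a functor $(\cC^{\op})^{\op} \rightarrow \Set$, which is exactly the shape of a weight for a weighted colimit of a $\cC^{\op}$-indexed diagram. Since $\cD(d,F(c)) = \cD^{\op}(F^{\op}(c),d)$, the defining isomorphism of $\{W,F\}$ reads
\[\cD^{\op}(\{W,F\},d) \cong \Set^{\cC}(W,\cD^{\op}(F^{\op}(-),d)) \cdpunct{,}\]
naturally in $d \in \cD^{\op}$. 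This is precisely \cref{defn:wcolim} for the weighted colimit $W \star F^{\op}$ computed in $\cD^{\op}$ over the indexing category $\cC^{\op}$. So I would first record the identification $\{W,F\} = W \star F^{\op}$ (in $\cD^{\op}$), including the universal cocone. This shows that all weighted limits in $\cD$ exist exactly when all the corresponding weighted colimits in $\cD^{\op}$ exist.

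Next I would apply the earlier lemma to $\cC^{\op}$ and $\cD^{\op}$. It assembles the weighted colimits into a functor $\Set^{\cC} \times (\cD^{\op})^{\cC^{\op}} \rightarrow \cD^{\op}$ that is cocontinuous in the weight. Taking opposites, and using $((\cD^{\op})^{\cC^{\op}})^{\op} \cong \cD^{\cC}$, gives a functor $(\Set^{\cC})^{\op} \times \cD^{\cC} \rightarrow \cD$, which is $\{-,-\}$. Cocontinuity in $\cD^{\op}$ says the comparison $\Colim_i (W_i \star F^{\op}) \rightarrow (\Colim_i W_i) \star F^{\op}$ is invertible in $\cD^{\op}$. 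A colimit in $\cD^{\op}$ is a limit in $\cD$, so reversing arrows turns this into the invertibility of $\{\Colim_i W_i, F\} \rightarrow \Lim_i \{W_i,F\}$. In other words, $\{-,F\}$ sends colimits of weights to limits, which is what ``cocontinuous in the weights argument'' means for the contravariant variable.

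For a self-contained check, I would also verify this directly through representables:
\[\cD(d,\{\Colim_i W_i,F\}) \cong \Set^{\cC}(\Colim_i W_i, \cD(d,F-)) \cong \Lim_i \Set^{\cC}(W_i,\cD(d,F-)) \cong \Lim_i \cD(d,\{W_i,F\}) \cong \cD(d,\Lim_i\{W_i,F\}) \cdpunct{,}\]
naturally in $d$, and then conclude by the Yoneda lemma. The second isomorphism holds because the hom-functor $\Set^{\cC}(-,G)$ sends colimits to limits, since colimits in $\Set^{\cC}$ are computed pointwise. The last isomorphism is the definition of a limit in $\cD$; it requires $\Lim_i \{W_i,F\}$ to exist, or else it shows that $\{\Colim_i W_i, F\}$ itself serves as that limit.

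The main obstacle is bookkeeping rather than mathematics. One point is checking that the opposite-category identifications of the functor categories and of the naturality in $F$ match the stated variance. The other is confirming that the abstract isomorphism above is the canonical comparison map, that is, the map induced by the legs $\{W_i,F\} \leftarrow \{\Colim W_i,F\}$ obtained from functoriality in the weight. This follows by tracing the identity through the Yoneda argument, just as in the colimit case.
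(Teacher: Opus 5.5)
Your proposal is correct and is essentially the paper's argument: the paper gives no separate proof and obtains this corollary as the dual of the cocontinuity lemma for weighted colimits, and it later records exactly your identification of $\{W,F\}$ with the weighted colimit $W \star F^{\op}$ in $\cD^{\op}$. Your direct Yoneda computation is simply the ``follows from the defining natural isomorphism'' justification spelled out. One small correction: $\Set^{\cC}(-,G)$ turns colimits into limits by the universal property of colimits, which holds in any category, so it does not depend on colimits being computed pointwise; pointwise computation only guarantees that $\Colim_i W_i$ exists.
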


However, weighted limits can be dualized in another evident way by declaring that the weighted limit of a functor $F$ is equivalent to defining the weighted colimit of the opposite functor $F^{\op}$. This is directly checked by manipulating the defining natural isomorphism of weighted limits and colimits.

\begin{lemma}
	A weighted limit of a functor $F : \cC \rightarrow \cD$ with respect to a weight $W : \cC \rightarrow \Set$ is equivalently defined as the weighted colimit of $F^{\op} : \cC^{\op} \rightarrow \cD^{\op}$ with respect to the same weight $W : \cC \rightarrow \Set$. Let $- \star - : \Set^{\cC} \times (\cD^{\cC})^{\op} \rightarrow \cD^{\op}$ and $\{-,-\}:(\Set^{\cC})^{\op} \times \cD^\cC \rightarrow \cD$ be the two defining functors. Then, there exists a natural isomorphism:
	\[\left(- \star -\right)^{\op} \cong \{-,-\}: (\Set^{\cC})^{\op} \times \cD^\cC \rightarrow \cD \cdpunct{.} \]
\end{lemma}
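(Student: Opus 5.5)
The plan is to manipulate the two defining natural isomorphisms directly, moving through opposite categories. Fix a weight $W : \cC \rightarrow \Set$ and a functor $F : \cC \rightarrow \cD$. Regard $W$ as a weight on $(\cC^{\op})^{\op} = \cC$, so that the weighted colimit $W \star F^{\op}$ of $F^{\op} : \cC^{\op} \rightarrow \cD^{\op}$ makes sense. By \cref{defn:wcolim}, this is an object of $\cD^{\op}$ equipped with an isomorphism
\[\cD^{\op}(W \star F^{\op}, d) \cong \Set^{\cC}(W, \cD^{\op}(F^{\op}(-), d)),\]
natural in $d \in \cD^{\op}$. Rewriting the hom-sets, the left side is $\cD(d, W \star F^{\op})$, and the functor $\cD^{\op}(F^{\op}(-),d) : \cC \rightarrow \Set$ is literally $\cD(d, F(-))$. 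So the display becomes
\[\cD(d, W\star F^{\op}) \cong \Set^{\cC}(W, \cD(d,F(-))),\]
which is exactly the defining isomorphism of $\{W,F\}$. Naturality in $d \in \cD^{\op}$ is the same thing as naturality in $d \in \cD$ for these contravariant hom-functors. Reading the argument backwards gives the converse. Hence one weighted (co)limit exists if and only if the other does, and the underlying objects agree, together with their universal (co)cones.

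For the functorial statement, I would proceed as follows. Assume all such weighted limits exist. The earlier lemma assembles them into a bifunctor via the Yoneda lemma: the action on a pair $(\alpha : W' \Rightarrow W, \; \beta : F \Rightarrow G)$ is the unique map induced by precomposition with $\alpha$ and postcomposition with $\beta$ on the right-hand sides of the defining isomorphisms.

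The functor $- \star -$ on $\Set^{\cC} \times (\cD^{\cC})^{\op}$, with values in $\cD^{\op}$, is built the same way. We use the identification $(\cD^{\op})^{\cC^{\op}} \cong (\cD^{\cC})^{\op}$, sending $F \mapsto F^{\op}$. Taking $(-)^{\op}$ of this functor yields a functor $(\Set^{\cC})^{\op} \times \cD^{\cC} \rightarrow \cD$. On objects it agrees with $\{-,-\}$ by the first paragraph, and these object-level identifications are natural in $d$. A morphism $(\alpha,\beta)$ acts on both sides by the same operation on $\Set^{\cC}(W,\cD(d,F(-)))$. So, by the Yoneda lemma (uniqueness of representing maps), the componentwise isomorphisms commute with the actions on morphisms. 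This gives the desired natural isomorphism $(- \star -)^{\op} \cong \{-,-\}$.

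The only delicate point is bookkeeping of variance. One must check that the identification $\cD^{\op}(F^{\op}(c), d) = \cD(d, F(c))$ is natural in $c \in \cC$ with the correct direction. One must also check that a transformation $\beta : F \Rightarrow G$ in $\cD^{\cC}$ corresponds to $\beta^{\op} : G^{\op} \Rightarrow F^{\op}$, which accounts for the $(\cD^{\cC})^{\op}$ factor. I expect this to be the main, though routine, obstacle. Everything else is a direct unwinding of definitions.
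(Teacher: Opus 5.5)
Your proposal is correct and follows the paper's route. The paper only says the lemma is checked by manipulating the defining natural isomorphisms. You do exactly that: you rewrite $\cD^{\op}(W\star F^{\op},d)$ as $\cD(d,W\star F^{\op})$ and $\cD^{\op}(F^{\op}(-),d)$ as $\cD(d,F(-))$, then use a Yoneda argument for naturality in $(W,F)$, making explicit the variance bookkeeping the paper leaves implicit.
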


One of the most important example of weighted limits include the set of natural transformation between two functors $F,G : \cC \rightarrow \Set$. In particular, there is a natural isomorphism $\Set(X,\Set^{\cC}(F,G)) \cong \Set^{\cC}(F,\Set(X,G-))$ (natural in all $F,G,X$) that exhibits $\Set^{\cC}(F,G)$ as the weighted limit of $G$ with respect to $F$. This follows from cartesian closure of $\Set$ applied pointwise at $c \in \cC$.

The only substantial difference of weighted limits from weighted colimits is in the unitality property of $\Yo$. In this case, $\Yo$ behaves as a unit only in the following sense.

\begin{lemma}[left unit]
	For every $F: \cC \rightarrow \cD$, there exists an isomorphism $\{\cC(c,-),F\} \cong F(c)$, which is natural in both $F$ and $c \in \cC$. 
\end{lemma}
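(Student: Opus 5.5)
The plan is to unwind the definition of the weighted limit and apply the Yoneda lemma to the covariant representable $\cC(c,-)$, exactly as in the proof that $W \star \Yo \cong W$, but now in the dual form. By definition, $\{\cC(c,-),F\}$ is an object equipped with an isomorphism natural in $d \in \cD$:
\[\cD(d,\{\cC(c,-),F\}) \cong \Set^{\cC}(\cC(c,-),\cD(d,F(-)))\cdpunct{.}\]
The right-hand side is a set of natural transformations out of a corepresentable functor into $\cD(d,F(-)) : \cC \rightarrow \Set$. The covariant Yoneda lemma (the dual of the isomorphism recalled in the notation section) gives
\[\Set^{\cC}(\cC(c,-),\cD(d,F(-))) \cong \cD(d,F(c)),\qquad \alpha \mapsto \alpha_c(1_c)\cdpunct{.}\]
This bijection is natural in the functor $\cD(d,F(-))$ and in $c$.

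The key steps, in order, are as follows.

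\begin{enumerate}
	\item Show that the composite isomorphism $\cD(d,F(c)) \cong \Set^{\cC}(\cC(c,-),\cD(d,F(-)))$ is natural in $d$. Precomposition with $g : d' \rightarrow d$ induces a natural transformation $\cD(d,F(-)) \Rightarrow \cD(d',F(-))$, and Yoneda is natural in the target functor, so this holds.
	\item Conclude that $F(c)$, together with the weighted cone corresponding to $1_{F(c)}$, satisfies the universal property defining $\{\cC(c,-),F\}$. Concretely, this cone is $\lambda_{c'}(f) = F(f)$ for $f : c \rightarrow c'$. Hence the weighted limit exists and is canonically isomorphic to $F(c)$.
	\item Check naturality in $F$ and in $c$. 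For a transformation $\theta : F \Rightarrow G$, or a morphism $u : c \rightarrow c'$ (which induces $\cC(c',-) \Rightarrow \cC(c,-)$), the functorial action of $\{-,-\}$ is determined via Yoneda by the universal cones. It then suffices to check that the cones $f \mapsto F(f)$ are carried to one another. That is the equation $\theta_{c'} \circ F(f) = G(f) \circ \theta_c$, i.e.\ naturality of $\theta$, together with the evident identity $F(f u) = F(f)F(u)$ for the variable $c$.
\end{enumerate}

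Alternatively, the statement is the formal dual of the Left unit lemma for weighted colimits. One applies that lemma to $F^{\op} : \cC^{\op} \rightarrow \cD^{\op}$ with weight $\Yo_{\cC^{\op}}(c) = \cC(c,-)$, and then uses the preceding lemma identifying $(-\star-)^{\op}$ with $\{-,-\}$. I would mention this as a remark but give the direct argument above.

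The only mildly delicate point is step 3. The functoriality of $\{-,-\}$ is defined only implicitly through the universal property, so naturality must be verified by comparing universal cones rather than by a direct formula. Everything else is a routine application of Yoneda.
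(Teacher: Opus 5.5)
Your proof is correct and takes the same route as the paper, whose proof simply invokes the Yoneda lemma for $\Set^{\cC}(\cC(c,-),\cD(d,F(-)))$, viewing a set of natural transformations as a weighted limit. Your explicit checks of naturality in $d$, $F$, and $c$ via the universal cone $\lambda_{c'}(f)=F(f)$ fill in details the paper leaves implicit.
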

\begin{proof}
	This is a restatement of the Yoneda lemma with the observation that the set of natural transformation can be expressed as a weighted limit.
\end{proof}

Finally, the following examples illustrates how weighted limits and colimits generalize the ordinary ones. In essence, the weights dictate the \enquote{shape} of the cone point. Thus, we retrieve the usual definition of limits or colimits when we take the \enquote{shape} to be an actual single point.

\begin{example}[Ordinary limits]
	Let $\Delta *: \cC \rightarrow \Set$ be the constant functor at the terminal object $* \in \Set$. For every functor $F : \cC \rightarrow \cD$, there exists a natural isomorphism $\{\Delta *, F\} \cong \Lim F$, whenever the desired limits exist.
\end{example}
\begin{proof}
	By definition of ordinary limits using the category of cones, we have the natural isomorphism $\cD(d,\lim F) \cong \cD^{\cC}(\Delta d, F(-))$. Now, we observe that $\Set^{\cC}(\Delta *, \cD(d,F(-))) \cong \cD^{\cC}(\Delta d,F(-))$ by applying cartesian closure of $\Set$ pointwise. Hence, we are done.
\end{proof}

\begin{example}[Ordinary colimits]
	Let $\Delta *: \cC^{\op} \rightarrow \Set$ be the constant functor at the terminal object $* \in \Set$. For every functor $F : \cC \rightarrow \cD$, there exists a natural isomorphism ${\Delta*} \star F\cong \Colim F $, whenever the desired colimits exist.
\end{example}

\begin{remark}
	On the surface, it may seem that weighted limits strictly generalize ordinary limits. However, it turns out that this is not true in $\Set$. There is another characterization of weighted limits using the Grothendieck construction on weights, which shows that weighted limits can be expressed completely using ordinary limits. We provide a complete description in \cref{lemma:wlimislim}.
\end{remark}

\subsection{Kan Extensions}


Extension and lifting problems are ubiquitous in mathematics. In categorical context, for a given functor $F : \cC \rightarrow \cD$, we are primarily interested in the extension of the domain or the lift of the codomain. We only require the \emph{extension problems}. Let $G : \cC \rightarrow \cK$ be any other functor. Any diagram of the form
\[\begin{tikzcd}
	&& \cC \\
	\\
	\cK &&&& \cD
	\arrow["G"', from=1-3, to=3-1]
	\arrow[""{name=0, anchor=center, inner sep=0}, "F", from=1-3, to=3-5]
	\arrow[""{name=1, anchor=center, inner sep=0}, "{ \exists H}"', dashed, from=3-1, to=3-5]
	\arrow["{\exists \gamma}", shift right=5, shorten <=7pt, shorten >=7pt, Rightarrow, dashed, from=0, to=1]
\end{tikzcd} \cdpunct[6pt]{,}\]
exhibits $H : \cK \rightarrow \cD$ as a left extension of $F : \cC \rightarrow \cD$ using the $2$-cell $\gamma : F \Rightarrow HG$ and satisfies or solves the \emph{left extension problem} of $F$ along $G$. The dual \emph{right extension problems} are given by reversing the direction of $2$-cell or by applying the $(-)^{\op}$ involution. As with any categorical problem, we prefer to have a universal solution for the left extension problem. In this case, we want a universal pair $(H,\gamma)$ such that every other pair factors through it. This motivates the following definition of a left Kan extension.

\begin{definition}\label{defn:lkanext}
	The left Kan extension $\Lan_{G} F$ of $F$ along $G$ is given by a universal pair $(\Lan_{G} F, \eta)$. This universal pair satisfies the property that for any other pair $(H,\gamma)$ satisfying the left extension problem of $F$ along $G$, we have the diagram
	\[\begin{tikzcd}
		&& \cC &&&&&& \cC \\
		&&&&& {= } \\
		\cK &&&& \cD && \cK &&&& \cD
		\arrow["G"', from=1-3, to=3-1]
		\arrow[""{name=0, anchor=center, inner sep=0}, "F", from=1-3, to=3-5]
		\arrow["G"', from=1-9, to=3-7]
		\arrow[""{name=1, anchor=center, inner sep=0}, "F", from=1-9, to=3-11]
		\arrow[""{name=2, anchor=center, inner sep=0}, "H"', from=3-1, to=3-5]
		\arrow[""{name=3, anchor=center, inner sep=0}, "{\Lan_{G} H}", from=3-7, to=3-11]
		\arrow[""{name=4, anchor=center, inner sep=0}, "H"', curve={height=30pt}, from=3-7, to=3-11]
		\arrow["\gamma", shift right=5, shorten <=7pt, shorten >=7pt, Rightarrow, from=0, to=2]
		\arrow["\eta"', shift right=5, shorten <=7pt, shorten >=7pt, Rightarrow, from=1, to=3]
		\arrow["{\exists! \tilde \gamma}"', shorten <=4pt, shorten >=4pt, Rightarrow, dashed, from=3, to=4]
	\end{tikzcd} \cdpunct[2\baselineskip]{,} \]
exhibiting the factorization $\gamma = \tilde\gamma G \circ \eta$.
Whenever the universal pair $(\Lan_{G} F,\eta)$ exists as above, we say that it satisfies or solves the \emph{left Kan extension problem} of $F$ along $G$.
\end{definition} 

\begin{remark}
	One can also contrast this definition with the \emph{worst solution} to the left extension problem, provided that $\cD$ has a terminal object. Indeed, simply define the worst extension by sending all of $\cK$ to the terminal object of $\cD$, whenever it exists.
\end{remark}

As with any definition involving universal objects, we can reinterpret this definition of a Kan extension through natural isomorphisms.

\begin{lemma}
	\begin{enumerate}[leftmargin=*,labelsep=.5em,itemsep=0.5em,topsep=0em, parsep=0.25em]
		\item[]
		\item The left Kan extension of $F$ along $G$ is equivalently defined as a left extension pair $(\Lan_{G} F, \eta)$ that satisfies the following isomorphism for each $H$:
		\[\begin{tikzcd}
			& {\cD^{\cC}(\Lan_{G}{F} \circ G,H\circ G)} \\
			\\
			{\cD^{\cK}(\Lan_{G} F,H) } && {\cD^{\cC}(F,H \circ G)}
			\arrow["{\eta^*}", from=1-2, to=3-3]
			\arrow["{\cD^G}", from=3-1, to=1-2]
			\arrow["\cong"', from=3-1, to=3-3]
		\end{tikzcd} \cdpunct[6pt]{.}\] 
		\item The left Kan extension of $F$ along $G$ is equivalently defined by an object $\Lan_{G} F$ such that there exists an isomorphism $\cD^{\cK}(\Lan_{G} F, H) \cong \cD^{\cC}(F,H \circ G)$, which is natural in $H$.
	\end{enumerate}
	Moreover, $\Lan_{G} F$ is functorial in both $F$ and $G$, provided that the desirable left Kan extensions exist. 
\end{lemma}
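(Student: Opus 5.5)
The approach is to reduce both parts to the Yoneda lemma, applied in the functor category $\cD^{\cK}$ and in $\cD^{\cC}$.

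\textbf{Part (1).} First, given any left extension pair $(L,\eta)$ with $\eta : F \Rightarrow LG$, I will consider the composite map
\[\eta^* \circ \cD^G : \cD^{\cK}(L,H) \to \cD^{\cC}(F,H\circ G), \qquad \alpha \mapsto \alpha G \circ \eta .\]
Second, I will unwind \cref{defn:lkanext}. A pair $(H,\gamma)$ is exactly an element $\gamma \in \cD^{\cC}(F,H\circ G)$. The universal property says that for each such $\gamma$ there is a unique $\tilde\gamma$ with $\gamma = \tilde\gamma G \circ \eta$. This is precisely the statement that the displayed map is surjective (existence) and injective (uniqueness). So universality of $(L,\eta)$ is equivalent to this map being a bijection for every $H$. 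No further work is needed for (1).

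\textbf{Part (2).} One direction is immediate: the map in (1) is natural in $H$. Indeed, for $\beta : H \Rightarrow H'$ we have $(\beta\alpha)G\circ\eta = \beta G \circ(\alpha G \circ \eta)$, by functoriality of whiskering.

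For the converse, suppose we are given some isomorphism $\Phi_H : \cD^{\cK}(L,H) \cong \cD^{\cC}(F,H\circ G)$, natural in $H$. I will set $\eta \vcentcolon= \Phi_L(1_L) : F \Rightarrow LG$. The Yoneda lemma, applied to the functor $\cD^{\cC}(F,-\circ G) : \cD^{\cK} \to \Set$, says that $\Phi$ is determined by this element. Concretely, naturality applied to $\alpha : L \Rightarrow H$ gives
\[\Phi_H(\alpha) = \Phi_H(\alpha\circ 1_L) = (\alpha G)\circ \Phi_L(1_L) = \alpha G \circ \eta .\]
Hence $\Phi = \eta^*\circ\cD^G$, and part (1) then shows that $(L,\eta)$ is a left Kan extension. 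The main point to check carefully is this naturality computation, namely that postcomposition in $\cD^{\cC}(F,-\circ G)$ really is whiskering by $G$ followed by composition. Size issues are harmless, since only hom-sets between fixed functors appear.

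\textbf{Functoriality.} This again comes from Yoneda. For $\phi : F \Rightarrow F'$, the transformation
\[\cD^{\cC}(F',-\circ G) \xrightarrow{\phi^*} \cD^{\cC}(F,-\circ G),\]
transported along the isomorphisms of (2), becomes a natural transformation $\cD^{\cK}(\Lan_G F',-) \Rightarrow \cD^{\cK}(\Lan_G F,-)$. By full faithfulness of the (co)Yoneda embedding, this is induced by a unique transformation $\Lan_G\phi : \Lan_G F \Rightarrow \Lan_G F'$. Uniqueness gives preservation of identities and composites.

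The same argument works for $G$. For $\psi : G' \Rightarrow G$, or for precomposition along $K : \cC' \to \cC$, I will use the maps $\cD^{\cC}(F,-\circ G)\to\cD^{\cC}(F,-\circ G')$ induced by $\psi$ in the same way. This yields functoriality in $G$ in the appropriate (2-categorical) variance.
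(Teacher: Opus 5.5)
Your proposal is correct and follows the paper's proof essentially step for step. Bijectivity of $\eta^*\circ\cD^G$ comes from existence and uniqueness of $\tilde\gamma$, $\eta$ is recovered as the image of $1_{\Lan_G F}$ under the given natural isomorphism, and functoriality comes from Yoneda applied to characterization (2); you additionally make the naturality computation $\Phi_H(\alpha)=\alpha G\circ\eta$ explicit, which the paper leaves implicit. One small slip: a transformation $\psi : G' \Rightarrow G$ induces $\cD^{\cC}(F,-\circ G')\to\cD^{\cC}(F,-\circ G)$ by postcomposition with $-\psi$, not the reverse, and hence a map $\Lan_G F \Rightarrow \Lan_{G'} F$ — this is exactly the contravariance in $G$ that the paper asserts.
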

\begin{proof}
	\begin{enumerate}[leftmargin=*,labelsep=.5em,itemsep=0.5em,topsep=0em, parsep=0.25em]
		\item []
		\item The existence of the unique $\tilde\gamma$ corresponding to each $\gamma$ provides the desired inverse map. Moreover, both the maps $\cD^{G}$ and $\eta_*$ are already natural in $H$. 
		\item The existence of any isomorphism $\cD^{\cK}(\Lan_{G} F, H) \cong \cD^{\cC}(F,H\circ G)$ natural in $H$, provides the desired $\eta$ by considering the image of $1_{\Lan_{G} F}$ under this isomorphism.
	\end{enumerate}
	The construction $\Lan_{G}F$ is covariantly functorial in $F$ and contravariantly functorial in $G$. This follows using the Yoneda lemma on the second characterization of the left Kan extension involving natural isomorphism in $H$.
\end{proof}

\begin{remark}
	Analogous definitions exist for right Kan extension by appropriately $(-)^{\op}$ dualizing.
\end{remark}

Unfortunately, these definitions of Kan extension are too abstract for most computational purposes. Thus, we provide other similar definitions which are more computable. These new definitions are mainly motivated by the idea that universal constructions such as limits/colimits can be defined representably in terms of limits/colimits in $\Set$. Hence, we ought to desire the same for Kan extensions.

\begin{definition}\label{defn:rkpoint}
	A right extension pair $(T, \eta)$ is called a pointwise\footnote{The usual definition of a pointwise Kan extension also assumes that it already is a Kan extension. However, this assumption is redundant and obscures the explicit analogy with the representable definition of ordinary limits and colimits.} right Kan extension if for all corepresentable functors given as $\cD(d,-) : \cD \rightarrow \Set$, the pair $(\cD(d,-) \circ T , \cD(d,-) \eta)$ is a right Kan extension pair.
\end{definition}

\begin{definition}\label{defn:lkpoint}
	A left extension pair $(T, \eta)$ is called a pointwise left Kan extension if for all representable functor given as $\cD(-,d) : \cD \rightarrow \Set^{\op}$, the pair $(\cD(-,d) \circ T , \cD(-,d) \eta)$ is a left Kan extension pair.
\end{definition}

The following two lemmas, adapted from the exposition in \cite{kellyect}, justify our claims about the computability and clarify the significance of the term \emph{pointwise}. Although these lemmas are formulated there for a general base of enrichment, the same holds in the case of $\Set$. Nevertheless, we provide explicit proofs for $\Set$ since the techniques used in the cited work lie far outside the scope of this paper. Furthermore, specializing their general base $\cV$ to $\Set$ is not always straightforward.

\begin{lemma}
	For every pointwise right Kan extension, if it exists, there is an isomorphism $T (k) \cong \{\cK(k,G-), F\}$, for each $k \in \cK$ naturally. In particular, the given weighted limits exist.
\end{lemma}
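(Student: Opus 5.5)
The plan is to unwind the pointwise hypothesis for a right extension pair $(T,\eta)$, where $F:\cC\to\cD$, $G:\cC\to\cK$, $T:\cK\to\cD$ and $\eta: TG\Rightarrow F$. Fix $d\in\cD$. By \cref{defn:rkpoint}, the pair $(\cD(d,T-),\cD(d,-)\eta)$ is a right Kan extension of $\cD(d,F-):\cC\to\Set$ along $G$. Dualizing the second characterization of Kan extensions (the natural-isomorphism one), this means that for every $H:\cK\to\Set$ there is a bijection
\[\Set^{\cK}(H,\cD(d,T-)) \cong \Set^{\cC}(H\circ G,\cD(d,F-)),\]
natural in $H$, and given by whiskering with $G$ followed by postcomposition with $\cD(d,-)\eta$.

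Next I specialize to the corepresentable $H=\cK(k,-)$. On the left-hand side, the Yoneda lemma (the isomorphism $\alpha\mapsto\alpha_k(1_k)$ recalled in the notation section, in its covariant form) gives $\Set^{\cK}(\cK(k,-),\cD(d,T-))\cong\cD(d,T(k))$. The right-hand side is exactly $\Set^{\cC}(\cK(k,G-),\cD(d,F-))$. Combining the two yields
\[\cD(d,T(k))\cong \Set^{\cC}(\cK(k,G-),\cD(d,F-)).\]

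The step requiring the most care is naturality in $d$. This is what the definition of the weighted limit $\{\cK(k,G-),F\}$ demands, and it is what makes $T(k)$ that weighted limit rather than merely an object with the right hom-sets. To check it, I would write the composite explicitly: a morphism $f:d\to T(k)$ is sent to the transformation whose component at $c\in\cC$ takes $u:k\to G(c)$ to $\eta_c\circ T(u)\circ f$. This formula is visibly natural in $d$ under precomposition, since precomposing with $g:d'\to d$ just replaces $f$ by $f\circ g$. It is also natural in $k$ under precomposing $u$ with a map $k'\to k$, because $T$ is a functor. Naturality in $d$ exhibits $T(k)$, together with this isomorphism, as $\{\cK(k,G-),F\}$, so in particular that weighted limit exists.

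Finally, naturality in $k$ gives the isomorphism $T(k)\cong\{\cK(k,G-),F\}$ naturally in $k$ via Yoneda, where the right side becomes functorial in $k$ through contravariance in the weight. The one point to verify is that the explicit formula above really agrees with the abstract Kan-extension bijection. This follows by tracing $1_k$ through the Yoneda isomorphism.
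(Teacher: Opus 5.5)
Your proposal is correct and follows the paper's proof. You apply the right Kan extension property of $(\cD(d,T-),\cD(d,-)\eta)$ at $H=\cK(k,-)$, use Yoneda to get $\cD(d,T(k))\cong\Set^{\cC}(\cK(k,G-),\cD(d,F-))$, and recognize this as the defining isomorphism of $\{\cK(k,G-),F\}$. Your explicit formula $f\mapsto(u\mapsto\eta_c\circ T(u)\circ f)$ justifies the naturality in $d$ and $k$ that the paper only asserts. Note that this formula comes from the first, $\eta$-explicit characterization of Kan extensions, not from the bare natural-isomorphism one.
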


\begin{proof}
	We are given that the pair $(\cD(d,-) \circ T , \cD(d,-) \eta)$ is a right Kan extension for each $\cD(d,-)$. The following diagram depicts this pictorially:
	\[\begin{tikzcd}
		&& \cC \\
		\\
		\cK &&&& \cD && \Set
		\arrow["G"', from=1-3, to=3-1]
		\arrow[""{name=0, anchor=center, inner sep=0}, "F", from=1-3, to=3-5]
		\arrow[""{name=1, anchor=center, inner sep=0}, "T"', from=3-1, to=3-5]
		\arrow["{\cD(d,-)}", from=3-5, to=3-7]
		\arrow["\eta"', shift left=5, shorten <=7pt, shorten >=7pt, Rightarrow, from=1, to=0]
	\end{tikzcd}\cdpunct[8pt]{.}\]
	In particular, the pair satisfies the following universal property natural in $H : \cK \rightarrow \Set$:
	\[\begin{tikzcd}
		& {\Set^{\cC}(H\circ G,\cD(d,-)\circ T \circ G)} \\
		\\
		{\Set^{\cK}(H,\cD(d,-)\circ T) } && {\Set^{\cC}(H \circ G,\cD(d,-)\circ F)}
		\arrow["{(\cD(d,-)\eta)_*}", from=1-2, to=3-3]
		\arrow["{\Set^G}", from=3-1, to=1-2]
		\arrow["\cong"', from=3-1, to=3-3]
	\end{tikzcd} \cdpunct[8pt]{.}\]
	The above isomorphism is also natural in $d \in \cD$. Thereafter, by setting $H=\cK(k,-)$ and applying the Yoneda lemma, we obtain the following natural isomorphism in $d \in \cD$ and $k \in \cK$:
	\[\cD(d,T(k)) \cong \Set^{\cC}(\cK(k,G-),\cD(d,F-))\cdpunct{.}\]
	However, this is precisely the defining natural isomorphism of a weighted limit. In particular, we obtain the desired isomorphism $T(k)\cong \{\cK(k,G-), F\}$ natural in $k \in \cK$ by definition of a weighted limit.
\end{proof}

The more important part is that it is enough to allow all the desired weighted limits to exist.

\begin{lemma}[cf.~{\cite[Prop~4.46 and Thm.~4.6]{kellyect}}]
	If all the weighted limits $\{K(k,G-),F\}$ exist, they assemble into a functor $\Ran_{G} F(k) := \{K(k,G-),F\}$, which satisfies the universal property of being a right Kan extension of $F$ along $G$. Moreover, any right Kan extension defined this way must also be a pointwise right Kan extension.
\end{lemma}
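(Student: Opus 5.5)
The plan is to build the functor from the assumed weighted limits, write down the candidate counit, and then verify the universal property of a right Kan extension pointwise, using the representable characterisation of weighted limits, the Yoneda lemma, and the end formula for $\Set^{\cK}(H,-)$. The pointwise claim will then follow by the same computation applied after composing with $\cD(d,-)$.

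\emph{Step 1: functoriality.} By the corollary on $\{-,-\}$, whenever the required weighted limits exist they assemble functorially in the weight, contravariantly. The assignment $k \mapsto \cK(k,G-)$ is a functor $\cK^{\op} \to \Set^{\cC}$, since it is the Yoneda embedding of $\cK^{\op}$ followed by restriction along $G$. Composing these gives a functor $T := \Ran_G F : \cK \to \cD$ with $T(k) = \{\cK(k,G-),F\}$. Concretely, a map $u : k \to k'$ induces $u^* : \cK(k',G-) \Rightarrow \cK(k,G-)$. Via the defining isomorphism $\cD(d,T(k)) \cong \Set^{\cC}(\cK(k,G-),\cD(d,F-))$, natural in $d$, precomposition with $u^*$ yields $T(u)$ by Yoneda.

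\emph{Step 2: the counit.} For $c \in \cC$, take the image of $1_{T(Gc)}$ under the defining isomorphism. This is a natural transformation $\lambda : \cK(Gc,G-) \Rightarrow \cD(T(Gc),F-)$. Evaluating it at $c$ on $1_{Gc}$ gives $\eta_c : T(G c) \to F(c)$. Naturality of $\eta$ in $c$ comes from naturality of $\lambda$ together with the definition of $T$ on morphisms $G(f)$.

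\emph{Step 3: universal property.} For $H : \cK \to \cD$ we need $\cD^{\cK}(H,T) \cong \cD^{\cC}(HG,F)$, natural in $H$, and given by $\alpha \mapsto \eta \circ \alpha G$. Write $\cD^{\cK}(H,T)$ as the end $\int_k \cD(Hk,Tk)$. Applying the defining isomorphism inside the end gives
\[\int_k \Set^{\cC}(\cK(k,G-),\cD(Hk,F-)) \cong \int_k\int_c \Set(\cK(k,Gc),\cD(Hk,Fc)).\]
Exchanging the ends, the inner end over $k$ is $\Set^{\cK^{\op}}(\cK(-,Gc),\cD(H-,Fc)) \cong \cD(HGc,Fc)$ by the Yoneda lemma. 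The outer end over $c$ is then $\cD^{\cC}(HG,F)$.

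To stay elementary I would replace the end manipulations by a direct argument. Given $\beta : HG \Rightarrow F$, define $\alpha_k$ as the map corresponding to the family $\cK(k,Gc) \ni v \mapsto \beta_c \circ H(v)$. Then check that $\alpha \mapsto \eta\circ\alpha G$ and $\beta\mapsto\alpha$ are mutually inverse. \textbf{The main obstacle} is this bookkeeping, specifically checking that the composite of the above isomorphisms is exactly $\alpha \mapsto \eta \circ \alpha G$ rather than merely some isomorphism; this comes down to unwinding $\lambda$ at the identity.

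\emph{Step 4: pointwise.} Compose with $\cD(d,-)$. The defining isomorphism gives
\[\cD(d,T(k)) \cong \{\cK(k,G-),\cD(d,F-)\},\]
where the right-hand side is the weighted limit in $\Set$ described earlier as a set of natural transformations. So $\cD(d,-)\circ T$ is again given by the same weighted-limit formula, now in $\Set$, and its counit is $\cD(d,-)\eta$. Steps 1–3 applied with $\cD$ replaced by $\Set$ and $F$ replaced by $\cD(d,F-)$ then show that $(\cD(d,-)\circ T, \cD(d,-)\eta)$ is a right Kan extension for every $d$. That is exactly the pointwise condition of the earlier definition.
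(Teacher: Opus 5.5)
Your proposal is correct and follows the paper's route. The universal property comes from the same chain of end isomorphisms $\int_k\cD(Hk,Tk)\cong\cdots\cong\int_c\cD(HGc,Fc)$. Pointwiseness comes from the fact that $\cD(d,-)$ sends $\{\cK(k,G-),F\}$ to the $\Set$-valued weighted limit $\{\cK(k,G-),\cD(d,F-)\}$ with counit $\cD(d,-)\eta$; the paper cites preservation of weighted limits by representables for this. Your explicit construction of $T$ on morphisms, and your check that the image of $1_T$ under the chain is the counit $\eta$, make precise details the paper leaves implicit.
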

\begin{proof}
	We show that there exists an isomorphism $\cD^{\cK}(H,\Ran_{G} F) \cong \cD^{\cC}(H\circ G,F)$ which is natural in $H$. Note that an element in $\cD^{\cK}(H,\Ran_{G} F)$ is simply a collection of maps $Hk \rightarrow \Ran_{G} F(k) \in \cD$ which is natural in $k \in \cK$. We can exhibit this fact using the concept of an \emph{end}. That is, we write $\cD^{\cK}(H,\Ran_{G} F) := \int_{k \in \cK}\cD(Hk, \Ran_{G} F(k))$\footnote{We haven't formally introduced ends, but for our purpose, they are nothing more than a book keeping tool for the naturality of a family of maps. For interested readers, see \cite{foscoend}.}. Now, we have the following chain of natural isomorphisms in $H$:
	\[\begin{tikzcd}
		{\cD^{\cK}(H,\Ran_{G} F)} & {\int_{k \in \cK}\cD(Hk,\Ran_{G} F(k))} \\
		& {\int_{k \in \cK} \Set^{\cC}(\cK(k,G-),\cD(Hk,F-))} \\
		& {\int_{k \in \cK} \int_{c \in \cC}\Set(\cK(k,Gc),\cD(Hk,Fc))} \\
		& {\int_{c \in \cC} \Set^{\cK}(\cK(-,Gc),\cD(H-,Fc))} \\
		{\cD^{\cC}(H \circ G,F)} & {\int_{c \in \cC}\cD(HGc,Fc)}
		\arrow[no head, from=1-1, to=1-2]
		\arrow[no head, from=1-2, to=2-2]
		\arrow[no head, from=2-2, to=3-2]
		\arrow[no head, from=3-2, to=4-2]
		\arrow[no head, from=4-2, to=5-2]
		\arrow[no head, from=5-2, to=5-1]
	\end{tikzcd} \cdpunct[8pt]{.}\]
	Hence, this demonstrates that $\Ran_{G} F(k) := \{\cK(k,G-),F\}$ satisfies the universal property of right Kan extension of $F$ along G. We are only left to show that this is a pointwise right Kan extension. 
	
	For each $k \in \cK$, let $\lambda^k$ denote the universal weighted cone corresponding to the weighted limit $\{\cK(k,G-),F\}$. Since weighted limits are preserved by representables (see \cite[Prop.~6.6.11]{borceux2}), it follows that the pair $(\cD(d, \{\cK(k,G-),F\}), \cD(d,-)_* \lambda^k)$ is the weighted limit of $\cD(d,F-)$ with weight $\cK(k,G-)$. Thus, if $(\{\cK(k,G-),F\} , \eta)$ is the original right Kan extension pair, this implies that the pair $(\cD(d, \{\cK(k,G-),F\}) , \cD(d,-)\eta)$ is also a right Kan extension pair as desired: To verify this, take $H= \Ran_{G} K$ and evaluate the defining chains of natural isomorphisms for both the original and the new weighted limits at the identity map $1_{\Ran_{G}K}$.
\end{proof}

\begin{corollary}
	A pointwise right Kan extension $(\Ran_{G} F, \eta)$ exists iff for all $k \in \cK $ the weighted limits $\{\cK(k,G-),F\}$ exist. In this case, $\Ran_{G} F$ is a right Kan extension and is computed at each $k \in \cK$ using the following weighted limit: 
	\[\Ran_{G} F (k) \cong \{\cK(k,G-), F\}\cdpunct{.}\]
\end{corollary}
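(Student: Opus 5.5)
The corollary has two directions, and both come from the two preceding lemmas; no new construction is needed.

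For ($\Rightarrow$), suppose $(T,\eta)$ is a pointwise right Kan extension of $F$ along $G$. The first of the two lemmas applies directly. It gives an isomorphism $T(k)\cong\{\cK(k,G-),F\}$, natural in $k\in\cK$, and in particular it asserts that each of these weighted limits exists. So the existence half is immediate.

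For ($\Leftarrow$), assume every $\{\cK(k,G-),F\}$ exists. The second lemma shows that these objects assemble into a functor $\Ran_G F(k):=\{\cK(k,G-),F\}$. It also shows that this functor is a right Kan extension of $F$ along $G$, via the chain of end isomorphisms $\cD^{\cK}(H,\Ran_GF)\cong\cD^{\cC}(H\circ G,F)$, natural in $H$. Finally, it shows the extension is pointwise, because representables $\cD(d,-)$ preserve weighted limits. So a pointwise right Kan extension exists.

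It remains to show that in either case the pointwise extension is a genuine right Kan extension with the stated formula.

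1. Start from an arbitrary pointwise right Kan extension $(T,\eta)$.
2. By the ($\Rightarrow$) direction, $T(k)\cong\{\cK(k,G-),F\}=\Ran_GF(k)$ naturally in $k$, so $T\cong\Ran_GF$ as functors $\cK\to\cD$.
3. Since $\Ran_GF$ is a right Kan extension, transporting its universal property along this natural isomorphism makes $T$ a right Kan extension too. This uses the natural-isomorphism characterization of Kan extensions, the dual of the earlier lemma on left Kan extensions.

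The only point needing care is that the transported $2$-cell agrees with $\eta$. Otherwise $(T,\eta)$ would merely be isomorphic to a Kan extension as a functor, not as a pair. To check this, I evaluate the natural isomorphism $\cD(d,T(k))\cong\Set^{\cC}(\cK(k,G-),\cD(d,F-))$ from the first lemma at $d=T(k)$ and the identity, and then take $k=Gc$ with the element $1_{Gc}$. Tracing through the Yoneda isomorphism used in that lemma, this recovers $\eta_c$. Hence the universal weighted cones correspond exactly to the components of $\eta$, and the identification is an isomorphism of extension pairs.

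With that checked, $\Ran_GF(k)\cong\{\cK(k,G-),F\}$ as claimed.
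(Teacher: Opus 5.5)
Your proposal is correct and follows essentially the same route as the paper. The paper likewise combines the two preceding lemmas: the first gives $T(k)\cong\{\cK(k,G-),F\}$ naturally in $k$, and the second shows that the weighted-limit formula is a pointwise right Kan extension, so $T$ inherits the universal property. Your extra check that the transported $2$-cell agrees with $\eta$, via $\lambda^{Gc}_c(1_{Gc})=\eta_c$, is correct and makes explicit a point the paper leaves implicit.
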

\begin{proof}
	We have shown above that $\Ran_{G}F(k) \cong \{\cK(k,G-),F\}$ naturally in $k \in \cK$. However, the latter also satisfies the universal property of being a right Kan extension and we are done.
\end{proof}

\begin{remark}
	By the previous corollary, it is justified to call any pointwise right Kan extension $T$ simply as $\Ran_{G} F$, whenever it exists. However, the converse of this corollary isn't always true. That is, not every right Kan extension is a pointwise right Kan extension \cite[p.~65]{kellyect}. 
\end{remark}

In general, dual version of all the characterizations of pointwise right kan extension also hold for left Kan extension by $(-)^{\op}$ involution.

\begin{lemma}
	A pointwise left Kan extension $(\Lan_{G} F, \eta)$ exists iff for all $k \in \cK $ the weighted colimits $\cK(G-,k) \star F$ exist. In this case, $\Lan_{G} F$ is a left Kan extension and is computed at each $k \in \cK$ using the following weighted colimit:
	\[\Lan_{G} F (k) \cong \cK(G-,k) \star F \cdpunct{.}\]
\end{lemma}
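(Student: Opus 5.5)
The plan is to derive this statement from the preceding corollary for pointwise right Kan extensions by $(-)^{\op}$-duality, rather than re-running the end calculus. The relevant dualities are already available. By the lemma identifying weighted limits in $\cD$ with weighted colimits in $\cD^{\op}$, a weighted colimit $W \star F$ with $W : \cC^{\op} \to \Set$ and $F : \cC \to \cD$ is the same as a weighted limit $\{W, F^{\op}\}$ in $\cD^{\op}$. Likewise, a left extension pair $(H,\gamma)$ of $F$ along $G$ is the same as a right extension pair $(H^{\op},\gamma^{\op})$ of $F^{\op}$ along $G^{\op}$. The universal property is reversed correctly because $\cD^{\cK}(\Lan_G F, H) \cong (\cD^{\op})^{\cK^{\op}}(H^{\op}, (\Lan_G F)^{\op})$.

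The key steps are as follows.
\begin{enumerate}
\item Check that $(T,\eta)$ is a pointwise left Kan extension of $F$ along $G$ (\cref{defn:lkpoint}) if and only if $(T^{\op},\eta^{\op})$ is a pointwise right Kan extension of $F^{\op}$ along $G^{\op}$ (\cref{defn:rkpoint}). This holds because the representable $\cD(-,d) : \cD \to \Set^{\op}$ is, up to the evident identification, the corepresentable $\cD^{\op}(d,-)$ on $\cD^{\op}$. Hence the condition that $(\cD(-,d)\circ T, \cD(-,d)\eta)$ is a left Kan pair into $\Set^{\op}$ translates to the condition that $(\cD^{\op}(d,-)\circ T^{\op}, \ldots)$ is a right Kan pair into $\Set$.
\item Apply the right Kan corollary to $F^{\op} : \cC^{\op} \to \cD^{\op}$ and $G^{\op} : \cC^{\op} \to \cK^{\op}$. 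It says that a pointwise right Kan extension exists if and only if all the weighted limits $\{\cK^{\op}(k, G^{\op}-), F^{\op}\}$ exist in $\cD^{\op}$, and that its value at $k$ is that weighted limit.
\item Rewrite this weight. We have $\cK^{\op}(k,G^{\op}-) = \cK(G-,k) : \cC^{\op} \to \Set$. By the op-duality lemma, the weighted limit $\{\cK(G-,k), F^{\op}\}$ in $\cD^{\op}$ is exactly the weighted colimit $\cK(G-,k) \star F$ in $\cD$.
\item Conclude that $\Lan_G F(k) \cong \cK(G-,k)\star F$, naturally in $k$, and that this is a genuine left Kan extension, since its opposite is a right Kan extension.
\end{enumerate}

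The main obstacle is step 1: the pointwise definitions use functors into $\Set^{\op}$ versus $\Set$, so the bookkeeping of which category is opposed must be done carefully. In particular, we must verify that a left Kan pair into $\Set^{\op}$ corresponds to a right Kan pair into $\Set$ after taking the opposite of the whole diagram, using $(\Set^{\op})^{\cK} \cong (\Set^{\cK^{\op}})^{\op}$.

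If the duality becomes too awkward, the fallback is to repeat the proofs directly. For existence, chain the end isomorphisms $\cD^{\cK}(\Lan_G F, H) \cong \int_k \Set^{\cC^{\op}}(\cK(G-,k), \cD(F-,Hk)) \cong \int_c \cD(Fc, HGc)$, using the Yoneda lemma in $k$. For the converse, take $H = \cD(T-,d)$ composed appropriately, set it to a representable, and apply the Yoneda lemma to recover the weighted-colimit isomorphism $\cD(T(k),d) \cong \Set^{\cC^{\op}}(\cK(G-,k),\cD(F-,d))$.
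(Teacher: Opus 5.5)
Your proposal is correct and matches the paper's approach. The paper gives no separate proof; it only remarks that the left-handed statement follows from the pointwise right Kan extension results by the $(-)^{\op}$ involution. Your steps 1--3 make that duality explicit: you identify $\cD(-,d):\cD\to\Set^{\op}$ with $\cD^{\op}(d,-)$, $\cK^{\op}(k,G^{\op}-)$ with $\cK(G-,k)$, and the weighted limit $\{W,F^{\op}\}$ in $\cD^{\op}$ with $W\star F$ in $\cD$, and each of these checks is right. The fallback is unnecessary, and its converse sketch misplaces the test functor: you should set the test functor to $H^{\op}=\cK(-,k)$, not $H=\cD(T-,d)$. The main duality argument stands on its own.
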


Finally, we have the following lemma showing that weighted limits can be expressed completely using certain ordinary limits and further justify the claim that pointwise Kan extension are more computable.

\begin{lemma}[cf.~{\cite[Lemma.~6.6.1]{borceux2}}] \label{lemma:wlimislim}
	Let $\pi : \el W \rightarrow \cC $ denote the Grothendieck construction on the functor $W : \cC \rightarrow \Set$. For every $F : \cC \rightarrow \cD$, there exists an isomorphism $\{W,F\} \cong \Lim\limits_{\el W} F\pi$. Furthermore, this isomorphism is natural\footnote{The cited text doesn't claim or demonstrate the naturality. We include the verification here for completeness.} in both $F$ and $W$. 
\end{lemma}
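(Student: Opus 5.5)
The plan is to compare the two defining universal properties representably: I will produce, for each $d \in \cD$, a bijection
\[\Set^{\cC}(W,\cD(d,F-)) \;\cong\; \mathrm{Cone}\bigl(d, F\pi\bigr) \;=\; \cD^{\el W}(\Delta d, F\pi),\]
natural in $d$. Since $\cD(d,\{W,F\})$ is by definition isomorphic to the left side, and $\cD(d,\Lim_{\el W} F\pi)$ is by definition isomorphic to the right side, the Yoneda lemma will then give $\{W,F\}\cong \Lim_{\el W}F\pi$. The argument also shows that one object exists exactly when the other does.

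To build the bijection, I start with a natural transformation $\alpha : W \Rightarrow \cD(d,F-)$. It consists of functions $\alpha_c : W(c) \to \cD(d,Fc)$, so it assigns to each object $(x,c)$ of $\el W$ a morphism $\alpha_c(x) : d \to Fc = F\pi(x,c)$. Naturality of $\alpha$ at $f : c \to c'$ reads $Ff \circ \alpha_c(x) = \alpha_{c'}(Wf(x))$. Here $Wf(x)$ is exactly the codomain element of the morphism $f : (x,c) \to (Wf(x),c')$ in $\el W$, by Definition~\ref{defn:grconst}. So this equation says precisely that the family $\{\alpha_c(x)\}$ is a cone over $F\pi$ with vertex $d$.

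Conversely, a cone $\{\mu_{(x,c)}\}$ gives back $\alpha_c(x) := \mu_{(x,c)}$. Every morphism of $\el W$ is of the form $f : (x,c)\to(Wf(x),c')$, so the cone condition recovers naturality of $\alpha$. The two assignments are inverse to each other on the nose. Naturality in $d$ is immediate, because precomposing with $g : d' \to d$ acts componentwise in the same way on both sides.

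For the naturality in $F$ and $W$, I will proceed as follows. For $\theta : F \Rightarrow F'$, postcomposition by $\theta_c$ commutes with the bijection at each component, so the induced maps on $\{W,F\}$ and on the limits agree, again by Yoneda. For $\beta : W' \Rightarrow W$, Lemma~\ref{lemma:grfunct} gives a functor $\el\beta : \el W' \to \el W$ over $\cC$. This yields the canonical comparison $\Lim_{\el W}F\pi \to \Lim_{\el W'}F\pi'$, obtained by restricting cones along $\el\beta$. On the other side, precomposing with $\beta$ induces $\{W,F\} \to \{W',F\}$. I need to check that the bijection carries precomposition by $\beta$ to restriction along $\el\beta$, which amounts to the identity $(\alpha\circ\beta)_c(x') = \alpha_c(\beta_c x')$.

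I expect this naturality in $W$ to be the main point needing care. It requires keeping the contravariance straight and identifying the map between limits induced by a change of indexing category. Once that is written out, it is a one-line check.
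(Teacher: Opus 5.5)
Your proposal is correct and is essentially the paper's argument. The paper first shows that $W$ is the left Kan extension of $\Delta * : \el W \to \Set$ along $\pi$, with $\tilde\gamma_c(x) = \gamma_{c,x}(*)$; this is exactly your inverse assignment $\alpha_c(x) := \mu_{(x,c)}$. It then specializes to $H = \cD(d,F-)$, which after the currying $\Set^{\el W}(\Delta *, \cD(d,F\pi-)) \cong \cD^{\el W}(\Delta d, F\pi)$ is your bijection, and your pointwise check $(\alpha\circ\beta)_c(x') = \alpha_c(\beta_c x')$ is the content of the paper's relation $\eta_2\,\el\alpha = \alpha\pi_1 \circ \eta_1$ used for naturality in $W$.
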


\begin{proof}
	The essential result is that the left Kan extension of the constant functor at the terminal set $\Delta * :\el W \rightarrow \Set$ along the functor $\pi : \el W \rightarrow \cC$ is the functor $W : \cC \rightarrow \Set$ itself. That is, there is a left Kan extension pair $(W,\eta)$ as demonstrated in the following diagram:
	\[\begin{tikzcd}
		&& {\el W} \\
		\\
		\cC &&&& \Set
		\arrow["\pi"', from=1-3, to=3-1]
		\arrow[""{name=0, anchor=center, inner sep=0}, "{\Delta*}", from=1-3, to=3-5]
		\arrow[""{name=1, anchor=center, inner sep=0}, "W"', from=3-1, to=3-5]
		\arrow["\eta"', shift right=4, between={0.2}{0.8}, Rightarrow, from=0, to=1]
	\end{tikzcd}\cdpunct[8pt]{.}\]
	We define the component of $\eta$ at $(c,* \xrightarrow{x} W(c))$ as the morphism $* \xrightarrow{x} W(c)$. To verify the universal property, let $(H,\gamma)$ be any other left extension pair. Define $\tilde\gamma_{c} : W(c) \rightarrow H(c)$ by assigning each element $x \in W(c)$ to $\gamma_{c,x}(*) \in H(c)$. Thereafter, it is a routine check.
	
	Now, the universal property of $W$ as the left Kan extension provides the following natural isomorphism:
	\[\begin{tikzcd}
		&& {\Set^{\el W}(W \circ \pi, H \circ \pi)} \\
		\\
		{\Set^{\cC}(W, H)} &&&& {\Set^{\el W}(\Delta *, H \circ \pi)}
		\arrow["{\eta^*}", from=1-3, to=3-5]
		\arrow["{\Set^{\pi}}", from=3-1, to=1-3]
		\arrow["\cong"', from=3-1, to=3-5]
	\end{tikzcd}\cdpunct[8pt]{.}\]
	Set $H = \cD(d,F-)$. Then, a representative for the left side is the usual definition of $\{W,F\}$ and a representative for the right side is $\Lim F\pi$. 
	
	To prove the naturality, we first note that the naturality in $F$ follows directly from the naturality in $H$. Hence, we focus only upon the naturality in $W$. Let $\alpha : W_1 \Rightarrow W_2$ be arbitrary and recall from \cref{lemma:grfunct} that it induces a map $\el \alpha : \el W_1 \rightarrow \el W_2$ compatible with the projection $\pi : \el W \rightarrow \cC$. The maps $\alpha$ and $\el \alpha$ satisfy the relation $\eta_2 \el \alpha = \alpha\pi_1 \circ \eta_1$ as exhibited by the following diagram:
	\[\begin{tikzcd}
		&& {\el W_1} &&&&&& {\el W_1} \\
		\\
		&& {\el W_2} &&& {=} \\
		\\
		\cC &&&& \Set && \cC &&&& \Set
		\arrow["{\el \alpha}"{description}, from=1-3, to=3-3]
		\arrow["{\pi_1}"{description}, from=1-3, to=5-1]
		\arrow["{\Delta*}"{description}, from=1-3, to=5-5]
		\arrow["{\pi_1}"', from=1-9, to=5-7]
		\arrow[""{name=0, anchor=center, inner sep=0}, "{\Delta*}", from=1-9, to=5-11]
		\arrow["{\pi_2}"{description}, from=3-3, to=5-1]
		\arrow[""{name=1, anchor=center, inner sep=0}, "{\Delta*}"{description}, from=3-3, to=5-5]
		\arrow[""{name=2, anchor=center, inner sep=0}, "{W_2}"{description}, from=5-1, to=5-5]
		\arrow[""{name=3, anchor=center, inner sep=0}, "{W_1}", from=5-7, to=5-11]
		\arrow[""{name=4, anchor=center, inner sep=0}, "{W_2}"{description}, curve={height=30pt}, from=5-7, to=5-11]
		\arrow["{\eta_1}", shift right=4, between={0.2}{0.8}, Rightarrow, from=0, to=3]
		\arrow["{\eta_2}"', shift right=5, between={0.2}{0.8}, Rightarrow, from=1, to=2]
		\arrow["\alpha", between={0.2}{0.8}, Rightarrow, from=3, to=4]
	\end{tikzcd}\cdpunct[28pt]{.}\]
	Then, we use this relation to assert that the following diagram commutes:
	\[\begin{tikzcd}
		{\Set^{\cC}(W_2,H)} && {\Set^{\el W_2}(W_2\circ \pi_2, H \circ \pi_2)} && {\Set^{\el W_2}(\Delta *, H \circ \pi_2)} \\
		\\
		&& {\Set^{\el W_1}(W_2\circ \pi_1, H \circ \pi_1)} \\
		\\
		{\Set^{\cC}(W_1,H)} && {\Set^{\el W_1}(W_1\circ \pi_1, H \circ \pi_1)} && {\Set^{\el W_1}(\Delta *, H \circ \pi_1)}
		\arrow["{\Set^{\pi_2}}", from=1-1, to=1-3]
		\arrow["{\alpha^*}"', from=1-1, to=5-1]
		\arrow["{\eta_2^*}", from=1-3, to=1-5]
		\arrow["{\Set^{\el \alpha}}", from=1-3, to=3-3]
		\arrow["{\Set^{\el \alpha}}", from=1-5, to=5-5]
		\arrow["{(\alpha\pi_1)^*}", from=3-3, to=5-3]
		\arrow["{\Set^{\pi_1}}"', from=5-1, to=5-3]
		\arrow["{\eta_1^*}"', from=5-3, to=5-5]
	\end{tikzcd}\cdpunct[8pt]{.}\]
	This ensures that the natural isomorphism defining the left Kan extension pair $(W,\eta)$ is natural in the choice of $W$. Hence, the result follows.
\end{proof}

The dual version of the above lemma also holds.

\begin{corollary}
	Let $\pi : \el W \rightarrow \cC $ denote the Grothendieck construction on the functor $W : \cC^{\op} \rightarrow \Set$. For every $F : \cC \rightarrow \cD$, there exists an isomorphism $ W \star F \cong \Colim\limits_{\el W} F\pi$, which is natural $W, F$. 
\end{corollary}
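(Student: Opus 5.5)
The plan is to dualize \cref{lemma:wlimislim} rather than repeat its argument. For $W : \cC^{\op} \rightarrow \Set$ and $F : \cC \rightarrow \cD$, consider $F^{\op} : \cC^{\op} \rightarrow \cD^{\op}$ and treat $W$ as a covariant weight on $\cC^{\op}$. The earlier lemma on the duality of weighted (co)limits identifies $W \star F$ in $\cD$ with the weighted limit $\{W, F^{\op}\}$ in $\cD^{\op}$. This comes directly from the defining isomorphisms:
\[\cD(W \star F, d) \cong \Set^{\cC^{\op}}(W, \cD(F-,d)) = \Set^{\cC^{\op}}(W, \cD^{\op}(d, F^{\op}-)).\]
The right-hand side is exactly the defining representable for $\{W,F^{\op}\}$, and the identification is natural in $W$ and $F$.

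Next I apply \cref{lemma:wlimislim} to the covariant functor $W : \cC^{\op} \rightarrow \Set$ and the diagram $F^{\op}$. This gives
\[\{W, F^{\op}\} \cong \Lim_{\el' W} F^{\op}\pi',\]
where $\pi' : \el' W \rightarrow \cC^{\op}$ is the covariant category of elements of $W$ regarded as a functor on $\cC^{\op}$. The isomorphism is natural in $W$ and $F^{\op}$. A limit in $\cD^{\op}$ of a diagram indexed by $\el' W$ is a colimit in $\cD$ of the opposite diagram indexed by $(\el' W)^{\op}$.

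The only bookkeeping step, and the one place I expect care to be needed, is to compare \cref{defn:grconst}. Objects of both $\el' W$ and the contravariant $\el W$ are pairs $(c,x)$ with $x \in W(c)$. A morphism $(c,x) \rightarrow (d,y)$ in $\el' W$ is a map $c \rightarrow d$ in $\cC^{\op}$, that is, $f : d \rightarrow c$ in $\cC$, with $W(f)(x) = y$. In the contravariant $\el W$ this is exactly a morphism $(d,y) \rightarrow (c,x)$. Hence $(\el' W)^{\op} = \el W$, and under this identification $(\pi')^{\op} = \pi$. The diagram $(F^{\op}\pi')^{\op}$ is therefore $F\pi$, and we obtain $W \star F \cong \Colim_{\el W} F\pi$.

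For naturality, the identification $(\el' W)^{\op} = \el W$ is compatible with the action $\el\alpha$ of \cref{lemma:grfunct} on morphisms $\alpha : W_1 \Rightarrow W_2$, since both are given by $(c,x) \mapsto (c,\alpha_c(x))$. Each isomorphism in the chain is natural in $W$ and $F$, so the composite is natural in both, as required.
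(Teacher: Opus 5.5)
Your proposal is correct and follows essentially the same route as the paper, whose proof is the one line ``use $\{W,F\}\cong W\star F^{\op}$ together with the duality of limits and colimits,'' applied to \cref{lemma:wlimislim}. Your explicit check that the covariant category of elements of $W\colon \cC^{\op}\to\Set$ is the opposite of the contravariant $\el W$, with projection the opposite of $\pi$ and compatible with $\el\alpha$, spells out the bookkeeping the paper leaves implicit.
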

\begin{proof}
	Use $\{W,F\}\cong W \star F^{\op}$ together with the duality of limits and colimits.
\end{proof}

The main consequence of these results is that ordinary completeness implies weighted completeness.
\begin{corollary}
	\begin{enumerate}[leftmargin=*,labelsep=.5em,itemsep=0.5em,topsep=0em, parsep=0.25em]
		\item[]
		\item If the category $\cD$ is complete then for all weights $X \in \Set^{\cC}$ and all functors $F \in \cD^{\cC}$, the weighted limits $\{W,F\}$ exist.
		\item If the category $\cD$ is cocomplete then for all weights $X \in \Set^{\cC^{\op}}$ and all functors $F \in \cD^{\cC}$, the weighted colimits $W\star F$ exist.
	\end{enumerate}
\end{corollary}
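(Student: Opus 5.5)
The plan is to reduce both parts to the preceding results on representing weighted (co)limits by ordinary ones. Part~(2) will follow from part~(1) by duality, so the main work is part~(1).

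For part~(1), fix a weight $W \in \Set^{\cC}$ and a functor $F \in \cD^{\cC}$. (The statement writes $X$ for the weight, but it is the same object as the $W$ appearing in $\{W,F\}$.)

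First, form the category of elements $\pi : \el W \rightarrow \cC$ from \cref{defn:grconst}. This is a small category whenever $\cC$ is small, since its objects are pairs $(c,x)$ with $c \in \cC$ and $x \in W(c)$, and its morphisms are a subset of $\mor \cC$.

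Since $\cD$ is complete, the ordinary limit $\Lim_{\el W} F\pi$ exists.

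Next, apply \cref{lemma:wlimislim}. Its proof builds the isomorphism $\Set^{\cC}(W,\cD(d,F-)) \cong \Set^{\el W}(\Delta *, \cD(d,F\pi-))$, natural in $d$, from the left Kan extension property of $(W,\eta)$ along $\pi$. It does not assume that $\{W,F\}$ exists beforehand. The right-hand side is naturally isomorphic to $\cD(d, \Lim F\pi)$ by the definition of limits via cones, together with cartesian closure as in the ordinary-limits example. Composing these isomorphisms shows that $\Lim_{\el W} F\pi$ satisfies the defining natural isomorphism of $\{W,F\}$, so the weighted limit exists.

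For part~(2), let $W \in \Set^{\cC^{\op}}$ and $F \in \cD^{\cC}$, with $\cD$ cocomplete. Then $\cD^{\op}$ is complete. By the duality lemma, the weighted colimit $W \star F$ in $\cD$ is the same as the weighted limit $\{W, F^{\op}\}$ of $F^{\op} : \cC^{\op} \rightarrow \cD^{\op}$ with weight $W$. Part~(1), applied to $\cC^{\op}$ and $\cD^{\op}$, gives its existence. Alternatively, one can directly take $\Colim_{\el W} F\pi$ as in the corollary following \cref{lemma:wlimislim}.

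The only real obstacle is making sure \cref{lemma:wlimislim} is used as an existence statement, not merely as an isomorphism between two objects already assumed to exist. I will handle this by checking that the chain of isomorphisms above is natural in $d$, so that the representing object $\Lim F\pi$ genuinely witnesses the universal property of $\{W,F\}$. Once that naturality is in place, the rest of the argument is bookkeeping.
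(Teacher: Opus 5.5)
Your proposal is correct and follows the paper's route. The paper gives no separate proof: it presents this corollary as the immediate consequence of \cref{lemma:wlimislim} and its dual. There, the isomorphism $\Set^{\cC}(W,\cD(d,F-))\cong\Set^{\el W}(\Delta *,\cD(d,F\pi-))$, natural in $d$, shows that $\Lim_{\el W}F\pi$ represents the weighted limit, and part~(2) follows by duality as you describe.

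One small correction: the morphisms of $\el W$ are not literally a subset of $\mor\cC$, since one arrow of $\cC$ can underlie several arrows of $\el W$. However, each hom-set of $\el W$ is a subset of a hom-set of $\cC$, so $\el W$ is still small.
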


\subsection{Nerve and Realization}

Let $\cC$ be any small category, $\cD$ be any locally small category, and $F: \cC \rightarrow \cD$ be any functor between them. Using the Yoneda embedding $\Yo$ $: \cC \rightarrow \Set^{\cC^{\op}}$, we can consider two canonical left Kan extension problems. The first of them is depicted by the following diagram:
\[\begin{tikzcd}
	&& \cC \\
	\\
	{\Set^{\cC^{\op}}} &&&& \cD
	\arrow[""{name=0, anchor=center, inner sep=0}, "\Yo"', from=1-3, to=3-1]
	\arrow["F", from=1-3, to=3-5]
	\arrow[""{name=1, anchor=center, inner sep=0}, "N", dashed, from=3-5, to=3-1]
	\arrow[shift left=5, shorten <=8pt, Rightarrow, dashed, from=0, to=1]
\end{tikzcd}\cdpunct[8pt]{.}\]
Here, $N$ is the left Kan extension of $\Yo$ along the given functor $F: \cC \rightarrow \cD$. We call $N$ as the \emph{nerve functor}. This $N$ exists pointwise using the following formula for $N(d) \in \Set^{\cC^{\op}}$:
\[N(d)(-) \cong \cD(F,d) \star \Yo \cong \cD(F(-),d)\cdpunct{.}\]
$N(d)$ defines a functor $\cC^{\op} \rightarrow \Set$ using precomposition and $N$ itself defines the functor $N: \cD \rightarrow \Set^{\cC^{\op}}$ using postcomposition. Moreover, $N$ preserves all limits that exists in $\cD$. This is true because limits in $\Set^{\cC^{\op}}$ are computed pointwise.

\begin{lemma}
	The left Kan extension $N$ exists, is pointwise, preserves any limits that exist, and is given by the formula $N(d)(-) \cong \cD(F(-),d) : \cC^{\op} \rightarrow \Set$.
\end{lemma}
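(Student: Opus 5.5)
The plan is to verify the claimed formula for $N$ directly and then read off existence, pointwiseness and limit preservation from results already in place. First I will set $N(d) := \cD(F(-),d) : \cC^{\op} \rightarrow \Set$. This is a functor in $c$ by precomposition with $F$, and it is functorial in $d$ by postcomposition, so we get $N : \cD \rightarrow \Set^{\cC^{\op}}$; local smallness of $\cD$ ensures each $N(d)$ lands in $\Set$. Next I will show that this formula agrees with the weighted colimit prescribed by the pointwise criterion. The lemma characterising pointwise left Kan extensions says that a pointwise left Kan extension of $\Yo$ along $F$ exists iff, for every $d \in \cD$, the weighted colimit $\cD(F-,d) \star \Yo$ exists in $\Set^{\cC^{\op}}$, and that in that case it is computed by this weighted colimit.

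The key step is the right-unit property of $\Yo$ proved earlier: for every $W : \cC^{\op} \rightarrow \Set$ we have $W \star \Yo \cong W$, naturally in $W$. Applying it with $W = \cD(F(-),d)$ shows that the required weighted colimit exists and equals $\cD(F(-),d)$. Since that isomorphism is natural in $W$, and $d \mapsto \cD(F(-),d)$ is a functor $\cD \rightarrow \Set^{\cC^{\op}}$, the isomorphisms are natural in $d$. The pointwise-criterion lemma then gives that $N$ is a pointwise left Kan extension, hence a left Kan extension, with the stated formula. I would also record the unit $\eta_c : \Yo(c) \Rightarrow N(F(c)) = \cD(F-,Fc)$, given by the action of $F$ on morphisms. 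This comes from evaluating the universal weighted cocone at identities.

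For limit preservation, I will argue pointwise. Suppose $d = \Lim_i d_i$ exists in $\cD$. Representables $\cD(x,-)$ preserve limits, so for each $c$ the canonical map $\cD(F(c),\Lim_i d_i) \rightarrow \Lim_i \cD(F(c),d_i)$ is a bijection. Limits in $\Set^{\cC^{\op}}$ are computed pointwise, so $N(\Lim_i d_i) \cong \Lim_i N(d_i)$ via the canonical comparison map.

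The main obstacle I expect is bookkeeping rather than substance. The statement calls $N$ a left Kan extension of $\Yo$ "along $F$", yet the diagram has $N : \cD \rightarrow \Set^{\cC^{\op}}$ with $N \circ F$ receiving a map from $\Yo$. That shape is really a left lifting (equivalently a pointwise extension in the formal $\cD^{\op}$ sense), so I must make sure the variance in the cited pointwise-criterion lemma matches. If it does not match literally, I would bypass that lemma. Instead I would verify the universal property directly: for any $H$ together with a 2-cell $\Yo \Rightarrow H F$, the induced comparison is obtained from the Yoneda lemma applied to the components $\Yo(c) \rightarrow H(F(c))$. Uniqueness then follows from full faithfulness of $\Yo$.
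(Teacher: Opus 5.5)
Your proposal is correct and is essentially the paper's argument: the pointwise formula $N(d)\cong \cD(F-,d)\star\Yo$ collapses to $\cD(F-,d)$ by the right-unit isomorphism $W\star\Yo\cong W$, and limit preservation comes from limits in $\Set^{\cC^{\op}}$ being computed pointwise (you also make explicit that representables preserve limits, which the paper leaves implicit). Your variance worry is unfounded and your fallback is not needed: $N:\cD\to\Set^{\cC^{\op}}$ with a 2-cell $\Yo\Rightarrow NF$ is literally a left extension of $\Yo$ along $F$ in the paper's definition of a left Kan extension (take $G=F$ and $\cK=\cD$), so the pointwise criterion applies verbatim.
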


\begin{corollary}
	If the functor $F$ is fully faithful, then $N(F(c)) \cong \Yo(c)$.
\end{corollary}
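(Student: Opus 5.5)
The corollary should follow directly from the formula in the preceding lemma, $N(d)(-) \cong \cD(F(-),d)$, applied at $d = F(c)$. That gives
\[N(F(c))(-) \cong \cD(F(-),F(c)) \cdpunct{,}\]
naturally in the variable $- \in \cC^{\op}$. The plan is to compare this presheaf with $\Yo(c) = \cC(-,c)$ using the action of $F$ on morphisms.

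The comparison map will be $\phi : \cC(-,c) \Rightarrow \cD(F(-),F(c))$, with components $\phi_{c'}(f) = F(f)$ for $f : c' \rightarrow c$. Naturality in $c'$ is immediate from functoriality of $F$. For $g : c'' \rightarrow c'$ we have $F(f \circ g) = F(f)\circ F(g)$, so precomposition by $g$ in $\cC(-,c)$ corresponds to precomposition by $F(g)$ in $\cD(F(-),F(c))$. Full faithfulness of $F$ says exactly that each component $\phi_{c'}$ is a bijection. Since a natural transformation between presheaves with bijective components is an isomorphism in $\Set^{\cC^{\op}}$ (inverses are computed pointwise), $\phi$ is a natural isomorphism. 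Composing it with the isomorphism from the lemma gives $N(F(c)) \cong \Yo(c)$.

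For completeness I will also record that the isomorphism is natural in $c$: for $h : c \rightarrow \tilde c$, postcomposition by $h$ corresponds to postcomposition by $F(h)$. I expect no real obstacle. The only point needing care is to use the explicit formula for $N$ from the lemma, and not some abstract characterization of the left Kan extension, so that the identification is literally with the hom-presheaf $\cD(F(-),F(c))$.
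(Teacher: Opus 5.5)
Your proposal is correct and follows the paper's intended argument. The paper states this corollary without proof as an immediate consequence of the formula $N(d)(-) \cong \cD(F(-),d)$; your steps of specializing to $d = F(c)$ and using that $f \mapsto F(f)$ gives a natural bijection $\cC(-,c) \cong \cD(F(-),F(c))$ (including your naturality and pointwise-inverse checks) are exactly the routine details it leaves implicit.
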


The second left Kan extension problem is depicted by the following diagram:
\[\begin{tikzcd}
	&& \cC \\
	\\
	{\Set^{\cC^{\op}}} &&&& \cD
	\arrow["\Yo"', from=1-3, to=3-1]
	\arrow[""{name=0, anchor=center, inner sep=0}, "F", from=1-3, to=3-5]
	\arrow[""{name=1, anchor=center, inner sep=0}, "h"', dashed, from=3-1, to=3-5]
	\arrow[shift right=5, shorten <=7pt, Rightarrow, dashed, from=0, to=1]
\end{tikzcd}\cdpunct[8pt]{.}\]
In this problem, the existence of the left Kan extension $h$ is not guaranteed as $\cD$ is not cocomplete. However, when the necessary colimits exist, it is defined pointwise as follows:
\[h(X) \cong \Set^{\cC^{\op}}(\Yo,X) \star F \cong  X \star F \cdpunct{.}\]
We call this $h$ as the \emph{realization functor}. More importantly, when $h$ exists as above, it is necessarily the left adjoint to the nerve functor $N$. This follows from the following defining natural isomorphisms of weighted colimits:
\[\Set^{\cC^{\op}}(X,N(d)) \cong \Set^{\cC^{\op}}(X,\cD(F,d)) \cong \cD(X \star F,d) \cong \cD(h(X),d)\cdpunct{.}\]
Conversely, these natural isomorphisms implies that whenever $N$ has a left adjoint $h$, it must satisfy the universal property of being a weighted colimit. In particular, $h$ is defined by the same weighted colimits and satisfy being a pointwise left Kan extension. The following lemma encapsulate the key idea. 

\begin{lemma}
	A left adjoint $h \dashv N$ exists iff $h$ exists as a pointwise left Kan extension of $F$ along $\Yo$. In this situation, $h$ is necessarily given by the formula $h(X) \cong X \star F$. 
\end{lemma}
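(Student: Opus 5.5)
The argument splits into the two directions of the equivalence. In both, the key observation is that for fixed $X \in \Set^{\cC^{\op}}$ and $d \in \cD$ we have the chain
\[\Set^{\cC^{\op}}(X,N(d)) \cong \Set^{\cC^{\op}}(X,\cD(F(-),d)) \cong \cD(X \star F, d),\]
in which the first isomorphism is the formula for $N$ from the preceding lemma, and the second is the defining isomorphism of \cref{defn:wcolim}.

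\textbf{Sufficiency.} Suppose $h$ exists as a pointwise left Kan extension of $F$ along $\Yo$. By the lemma characterizing pointwise left Kan extensions through weighted colimits, the colimits $\Set^{\cC^{\op}}(\Yo(-),X) \star F$ exist, and $h(X)$ is isomorphic to them naturally in $X$. The Yoneda isomorphism $\Set^{\cC^{\op}}(\Yo(-),X) \cong X$ is natural, and weighted colimits are invariant under isomorphism of weights, since the defining universal property only involves $\Set^{\cC^{\op}}(W,-)$. Hence $h(X) \cong X \star F$. The displayed chain then yields $\Set^{\cC^{\op}}(X,N(d)) \cong \cD(h(X),d)$. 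It is natural in $d$ by the naturality built into both isomorphisms. It is natural in $X$ because $h$ acts on morphisms through the functoriality of $-\star F$ in the weight, which the Yoneda lemma extracts from the same defining isomorphisms. This gives $h \dashv N$.

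\textbf{Necessity.} Suppose $N$ has a left adjoint $h$, so $\cD(h(X),d) \cong \Set^{\cC^{\op}}(X,N(d))$ naturally in $X$ and $d$. Fixing $X$ and composing with the chain, we get $\cD(h(X),d) \cong \Set^{\cC^{\op}}(X,\cD(F(-),d))$ naturally in $d$. This is exactly the universal property of $X \star F$, so this weighted colimit exists and $h(X) \cong X \star F$. Rewriting the weight as $\Set^{\cC^{\op}}(\Yo(-),X)$ via Yoneda, every weighted colimit $\Set^{\cC^{\op}}(\Yo(-),X)\star F$ exists. By the same pointwise lemma, a pointwise left Kan extension $\Lan_{\Yo}F$ therefore exists and is given by these colimits. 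Its values agree with $h$, and the functorial actions agree because both are induced from the same universal properties, naturally in $X$. So $h$ itself is the pointwise left Kan extension.

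The only delicate point is this naturality bookkeeping: matching the functor structure of $h$, which comes from the adjunction, with that of $X\mapsto X\star F$, which comes from the Yoneda extraction. I expect to settle it with a single appeal to the Yoneda lemma: a family of representing objects with isomorphisms natural in both variables determines a unique functor structure. With that, the formula $h(X)\cong X\star F$ follows in either direction.
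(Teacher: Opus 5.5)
Your proposal is correct and is essentially the paper's own argument. The paper likewise rewrites the pointwise formula $\Set^{\cC^{\op}}(\Yo(-),X)\star F$ as $X\star F$ via Yoneda, reads off $h\dashv N$ from the chain $\Set^{\cC^{\op}}(X,N(d))\cong\Set^{\cC^{\op}}(X,\cD(F(-),d))\cong\cD(X\star F,d)$, and runs the same chain backwards for the converse; your extra care about naturality in $X$ only makes explicit what the paper leaves implicit.
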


Finally, we note that the Yoneda embedding $\Yo$ is fully faithful. As a result, precomposing the left Kan extension with $\Yo$ should yield a natural isomorphism. Indeed, the isomorphism $h(\Yo(c)) \cong F(c)$ holds naturally in $c \in \cC$, whenever $h$ exists. However, it holds even without assuming the existence of $h$, as we have already shown that $\Yo(c) \star F \cong F(c)$ is true for any $F$.

	\section{Categories and simplicial sets}
	\subsection{\tpdf{Homotopy category $h$ and Nerve $N$}}\label{sec:handN}

Let $\bDelta$ be the full subcategory of $\Cat$ spanned by all $[n]$ with $n\geq 0$. We label objects of $\bDelta$ using $n$ instead of $[n]$. Now, we specialize the nerve and realization theory in the context of the inclusion $\bDelta \hookrightarrow \Cat$ to obtain the following two left Kan extension problems:
\[\begin{tikzcd}
	&& \bDelta &&&&&& \bDelta \\
	\\
	\sSet &&&& \Cat && \sSet &&&& \Cat
	\arrow[""{name=0, anchor=center, inner sep=0}, "{\Yo : n \mapsto \Delta^n}"', from=1-3, to=3-1]
	\arrow["{[] : n \mapsto[ n]}", hook, from=1-3, to=3-5]
	\arrow["{\Yo : n \mapsto \Delta^n}"', from=1-9, to=3-7]
	\arrow[""{name=1, anchor=center, inner sep=0}, "{[] : n \mapsto [n]}", hook, from=1-9, to=3-11]
	\arrow[""{name=2, anchor=center, inner sep=0}, "N", dashed, from=3-5, to=3-1]
	\arrow[""{name=3, anchor=center, inner sep=0}, "h"', dashed, from=3-7, to=3-11]
	\arrow[shift left=5, shorten <=8pt, Rightarrow, dashed, from=0, to=2]
	\arrow[shift right=2, shorten <=8pt, shorten >=8pt, Rightarrow, dashed, from=1, to=3]
\end{tikzcd}\cdpunct[8pt]{.}\]

$N$ exists as any general nerve does and is computed pointwise as $N(\cC)_{-} \cong \Cat([-],\cC)$. The face maps and degeneracy maps given by precomposition, and functoriality in $\cC \in \Cat$ given by postcomposition. 

For the realization $h$, we have no guarantee of its existence, unless we assume that the category $\Cat$ is cocomplete. Instead, we have the following corollaries which follow from specializing the general nerve and realization results.

\begin{corollary}
	A left adjoint $h \dashv N$ exists iff $h$ exists as a pointwise left Kan extension of $\bDelta \hookrightarrow \Cat$ along $\Yo$. In this situation, $h$ is given by the formula $h(X) \cong X \star []$.
\end{corollary}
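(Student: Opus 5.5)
This is a direct specialization of the last lemma of the Nerve and Realization subsection, applied to $\cC = \bDelta$, $\cD = \Cat$, and $F = [\,] : \bDelta \hookrightarrow \Cat$. First I check that the hypotheses hold: $\bDelta$ is small and $\Cat$ is locally small. The Yoneda embedding is then $\Yo : \bDelta \to \sSet$, $n \mapsto \Delta^n$. By the lemma on the nerve, the nerve is $N(\cC)_n \cong \Cat([n],\cC)$, and this is the $N$ of the statement.

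For the forward direction, suppose $h \dashv N$. For each $X \in \sSet$ and $\cC \in \Cat$ I compose the chain
\[\Cat(h(X),\cC) \cong \sSet(X, N(\cC)) = \Set^{\bDelta^{\op}}(X, \Cat([-],\cC)),\]
which is natural in $\cC$. This is exactly the defining isomorphism of \cref{defn:wcolim}, so $X \star [\,]$ exists and $h(X) \cong X \star [\,]$. Now take $X = \sSet(\Yo(-),k)$, where $k \in \sSet$; by the Yoneda lemma this is $k$ itself. So the weighted colimits $\sSet(\Yo-,k)\star[\,]$ exist for every $k$. By the lemma characterizing pointwise left Kan extensions through weighted colimits, $h$ (with $\eta$ obtained from the units) is then a pointwise left Kan extension of $[\,]$ along $\Yo$.

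For the converse, suppose $h$ is a pointwise left Kan extension. The same lemma gives $h(X) \cong \sSet(\Yo-,X)\star[\,] \cong X \star [\,]$, naturally in $X$. Reading the chain above backwards gives $\Cat(h(X),\cC) \cong \sSet(X,N(\cC))$, natural in $\cC$. It remains to check naturality in $X$, and this follows from the functoriality of $-\star-$ in the weight, which comes from Yoneda. Hence $h \dashv N$.

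There are two delicate points. The first is the identification $\sSet(\Yo-,X) \cong X$, which must be natural so that the weighted colimit formula for $\Lan$ really becomes $X\star[\,]$. The second is confirming that naturality in both variables holds without ever assuming that $\Cat$ is cocomplete. No circularity arises, because everything used is the general nerve/realization theory, which only requires the specific weighted colimits to exist.
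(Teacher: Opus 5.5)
Your proposal is correct and follows the paper's route. The paper also obtains the corollary by specializing the general nerve--realization lemma to $[\,]\colon\bDelta\hookrightarrow\Cat$, and it justifies that lemma with exactly your chain $\Cat(h(X),\cC)\cong\sSet(X,N\cC)\cong\Set^{\bDelta^{\op}}(X,\Cat([-],\cC))$, which identifies $h(X)$ with $X\star[\,]$, together with the weighted-colimit characterization of pointwise left Kan extensions and the Yoneda identification $\sSet(\Yo-,X)\cong X$; your remarks on naturality in $X$ and on non-circularity make explicit what the paper leaves implicit.
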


\begin{corollary} \label{cor:Deltannat}
	For all $n \in \bDelta$, the weighted colimits $\Delta^n \star []$ exist and satisfy the isomorphism $\Delta^n \star [] \cong [n]$, natural in $n \in \bDelta$. 
\end{corollary}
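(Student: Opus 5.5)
This is a direct specialization of the Left unit lemma from the preliminaries. That lemma says that for every functor $F : \cC \rightarrow \cD$ there is an isomorphism $\Yo(c) \star F \cong F(c)$, natural in both $c$ and $F$. I would take $\cC = \bDelta$, $\cD = \Cat$ and $F = [] : \bDelta \hookrightarrow \Cat$. Since $\Yo(n) = \Delta^n$, this gives $\Delta^n \star [] \cong [n]$ naturally in $n$. The only things to make explicit are, first, that existence of this weighted colimit is part of the conclusion and is not assumed, and second, that no cocompleteness of $\Cat$ is used.

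To make existence explicit, I would verify the defining isomorphism of \cref{defn:wcolim} directly, with $[n]$ as the universal object. For each $\cD \in \Cat$ I need
\[\Cat([n],\cD) \cong \sSet(\Delta^n, \Cat([-],\cD)),\]
natural in $\cD$. The right-hand side is $\sSet(\Yo(n), N(\cD))$. By the Yoneda isomorphism recalled in the notation section, this is $N(\cD)_n = \Cat([n],\cD)$ via $\alpha \mapsto \alpha_n(1_{[n]})$. That bijection is natural in the presheaf argument, and $N(\cD)$ depends functorially on $\cD$ by postcomposition, so the composite is natural in $\cD$. Hence $[n]$, together with the universal weighted cocone corresponding to $1_{[n]}$ (the Yoneda element), satisfies the universal property. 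So $\Delta^n \star []$ exists and is isomorphic to $[n]$.

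For naturality in $n$, take $f : m \rightarrow n$ in $\bDelta$. It induces $\Delta^f : \Delta^m \rightarrow \Delta^n$, and hence, by the universal property, a functor $\Delta^m \star [] \rightarrow \Delta^n \star []$: the one corresponding to precomposing the universal cocone of $\Delta^n$ with $\Delta^f$. I would check that, under the identifications above, this functor is $[f] : [m] \rightarrow [n]$. This follows from naturality of the Yoneda isomorphism in its representable argument: the element $1_{[n]}$ is sent to $\Cat([f],[n])(1_{[n]}) = [f] \in N([n])_m$. Functoriality of the assignment $n \mapsto \Delta^n \star []$ then follows from the uniqueness clause of the universal property.

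The only point needing care is this naturality bookkeeping, namely that the induced maps between the universal objects are exactly the inclusion's action on morphisms. It reduces to naturality of the Yoneda isomorphism in both variables.
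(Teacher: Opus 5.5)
Your proposal is correct and follows the paper's route: the paper obtains this corollary by specializing the left unit isomorphism $\Yo(c) \star F \cong F(c)$, natural in $c$, to $F = [] : \bDelta \hookrightarrow \Cat$. Your explicit Yoneda verification of existence and of naturality in $n$ simply unpacks that lemma, and correctly notes that no cocompleteness of $\Cat$ is used.
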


The previous corollary together with the definition of weighted colimits immediately yield the isomorphism $\Cat([n],\cC) \cong \sSet(\Delta^n, N\cC)$, which is natural in both $n \in \bDelta$ and $\cC \in \Cat$. In essence, this suggests that $N$ is fully faithful. To begin with, we have the following important definitions.

\begin{definition}
	Define the $n$th spine $I_n$ to be the subsimplicial set of $\Delta^n$ spanned by all the non-degenerate $1$-simplices of the form $\{k,k+1\}$ for all $0\leq k \leq n-1$. 
\end{definition}

This simplicial set $I_n$ is also expressible as a gluing of $n$ copies of $\Delta^1$ along the edge points. That is, the following diagram is a colimiting cocone expressing $I_n$ as a colimit:
\[\begin{tikzcd}[column sep=small]
	0 && 1 && 2 && {n-1} && n \\
	& {\{0 \rightarrow 1\}} && {\{1 \rightarrow 2\}} && \cdots && {\{n-1 \rightarrow n\}} \\
	\\
	\\
	&&&& {I_n}
	\arrow[from=1-1, to=2-2]
	\arrow[from=1-3, to=2-2]
	\arrow[from=1-3, to=2-4]
	\arrow[from=1-5, to=2-4]
	\arrow[from=1-5, to=2-6]
	\arrow[from=1-7, to=2-6]
	\arrow[from=1-7, to=2-8]
	\arrow[from=1-9, to=2-8]
	\arrow[shorten <=10pt, from=2-2, to=5-5]
	\arrow[from=2-4, to=5-5]
	\arrow[from=2-6, to=5-5]
	\arrow[from=2-8, to=5-5]
\end{tikzcd}\cdpunct[8pt]{.}\]
Moreover, this diagram excluding the cone point also has a canonical map into $\Delta^n$. By the universal property of colimits, this provide a canonical inclusion map $i_n : I_n \hookrightarrow \Delta^n$.

\begin{definition} \label{defn:spextp}
	A simplicial set $X \in \sSet$ is said to have the $n$th spine extension property ($\iep_n$) if the canonical restriction map $\sSet(\Delta^n,X) \rightarrow \sSet(I_n,X)$ induced by $i_n: I_n \hookrightarrow \Delta^n$ is an isomorphism. In other words, the following extension diagram holds for each map $I_n \rightarrow X$:
	\[\begin{tikzcd}
		{I_n} && X \\
		\\
		{\Delta^n}
		\arrow[from=1-1, to=1-3]
		\arrow[hook', from=1-1, to=3-1]
		\arrow["{\exists!}"', dashed, from=3-1, to=1-3]
	\end{tikzcd}\cdpunct[8pt]{.}\]
	If this property holds for all $n \geq 2$, we call it the \emph{spine extension property} $\iep$.
\end{definition}

\begin{remark}
	The property $\iep$ is trivially true for $n=0$ and $n=1$ because $I_{n}=\Delta^{n}$ in those cases. 
\end{remark}

The $\iep$ property plays an important role by characterizing the essential image of the nerve functor.

\begin{lemma} \label{lemma:spinemaps}
	Consider two maps $f,g : X \rightarrow Y$ in $\sSet$. If $f_1$ is a bijection then the map $f$ induces bijections $f_*|_{I_n}: \sSet(I_n,X) \xrightarrow{\cong} \sSet(I_n,Y)$ for all $n\geq 0$. If, moreover, $Y$ satisfies $\iep$ then the map $f$ is uniquely determined by its action on $1$-simplices, i.e., $f=g \iff f_1=g_1$.
\end{lemma}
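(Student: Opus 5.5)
The plan is to reduce everything to the colimit presentation of the spine $I_n$ as $n$ copies of $\Delta^1$ glued along vertices, combined with the Yoneda isomorphism $\sSet(\Delta^k,X)\cong X_k$.

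\textbf{Step 1: $f_0$ is a bijection.} First I check that $f_1$ bijective forces $f_0$ bijective. The degeneracy $s_0 : X_0 \rightarrow X_1$ has the retraction $d_0$, so it is injective, and $f$ commutes with $s_0$ and $d_0$.
\begin{itemize}
	\item Injectivity: if $f_0(x)=f_0(y)$, then $f_1(s_0x)=s_0f_0(x)=s_0 f_0(y)=f_1(s_0y)$. Hence $s_0x=s_0y$, and so $x=y$.
	\item Surjectivity: given $y\in Y_0$, write $s_0y=f_1(e)$. Then $f_0(d_0e)=d_0 s_0 y=y$.
\end{itemize}

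\textbf{Step 2: spine maps are bijective.} Apply $\sSet(-,X)$ to the colimiting cocone exhibiting $I_n$. Since hom-functors send colimits to limits, and by Yoneda, this gives a natural isomorphism
\[\sSet(I_n,X)\cong X_1\times_{X_0}X_1\times_{X_0}\cdots\times_{X_0}X_1 ,\]
an iterated fibre product over the face maps $d_0,d_1$. The isomorphism is natural in $X$, so $f_*|_{I_n}$ corresponds to the map induced on fibre products by $(f_1,f_0)$. A morphism of such limit diagrams that is componentwise bijective induces a bijection on limits. This handles $n\geq 1$; for $n=0$ we have $I_0=\Delta^0$ and $f_0$ is bijective by Step 1.

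\textbf{Step 3: determination by $1$-simplices.} Assume $Y$ satisfies $\iep$ and $f_1=g_1$; the converse direction is trivial. I do not expect to need the bijectivity hypothesis here. Take any $x\in X_n$, viewed as $x:\Delta^n\rightarrow X$. Then $f\circ x$ and $g\circ x$ are maps $\Delta^n\rightarrow Y$, and I compare their restrictions along $i_n$:
\begin{itemize}
	\item By the fibre product description of Step 2, the restrictions $f\circ x\circ i_n$ and $g\circ x\circ i_n$ are determined by the images under $f_1$, respectively $g_1$, of the edges $x|_{\{k,k+1\}}$.
	\item Since $f_1=g_1$, these restrictions coincide.
	\item The uniqueness half of $\iep_n$ for $Y$ then gives $f\circ x=g\circ x$, that is, $f_n(x)=g_n(x)$.
\end{itemize}
For $n=0,1$, $\iep$ holds trivially, and degree $0$ is also covered by the faces of $1$-simplices. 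Hence $f=g$.

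The only step needing real care is Step 2: making the identification of $\sSet(I_n,-)$ with an iterated fibre product explicit and natural in $X$. This rests on $I_n$ being the stated colimit and on hom-functors turning colimits into limits. The rest is formal.
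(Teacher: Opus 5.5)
Your proof is correct and follows essentially the same route as the paper. Both arguments identify $\sSet(I_n,-)$ with the limit coming from the colimit presentation of $I_n$, deduce that $f_*|_{I_n}$ is bijective from componentwise bijectivity, and then use the uniqueness half of $\iep$ for $Y$; the paper phrases this last step as $f_*|_{\Delta^n} = {i_n^*}^{-1}\circ f_*|_{I_n}\circ i_n^*$. Your Step~1 (bijectivity of $f_0$ via $d_0 s_0 = \mathrm{id}$) supplies a detail the paper's appeal to the ``universal property of limits'' leaves implicit, and you are right that the second claim does not need $f_1$ to be bijective.
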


\begin{proof}
	The diagram
	\[\begin{tikzcd}
		{\sSet(\Delta^n,X)} && {\sSet(\Delta^n,Y)} \\
		\\
		{\sSet(I_n,X)} && {\sSet(I_n,Y)}
		\arrow["{f_n= f_{*}|_{\Delta^n}}", from=1-1, to=1-3]
		\arrow["{i_n^*}"', from=1-1, to=3-1]
		\arrow["{i_n^*}", from=1-3, to=3-3]
		\arrow["{f_{*}|_{I_n}}", from=3-1, to=3-3]
	\end{tikzcd}\]
	commutes for all $n\geq 0$ as the order of postcomposition by $f$ or precomposition by $i_n$ does not matter. Because $I_n$ is a colimit of $n$ copies of $\Delta^1$, the map $f_*|_{I_n} : \sSet(I_n,X) \rightarrow \sSet(I_n,Y)$ for all $n\geq 0$ is uniquely determined by $f_1$. In other words, $f_*|_{I_n} = g_*|_{I_n}$ for all $n\geq 0$ iff $f_1 = g_1$. If furthermore, $f_1$ is a bijection then the map $f_*|_{I_n}$ is an isomorphism for all $n\geq 0$ by virtue of universal property of limits
	
	Now, the additional hypothesis that $Y$ satisfies $\iep$ provides the relation $f_*|_{\Delta^n} ={i_{n}^{*}}^{-1} \circ f_*|_{I_n} \circ i_{n}^{*}$ for all $n\geq 0$. In particular, we obtain the desired result that $f=g$ iff $f_1 = g_1$. 
\end{proof}

\begin{corollary}
	If both $X$ and $Y$ satisfy $\iep$ then the map $f$ is an isomorphism iff $f_1$ is a bijection.
\end{corollary}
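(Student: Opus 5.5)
The statement to prove: if both $X$ and $Y$ satisfy $\iep$, then $f : X \to Y$ is an isomorphism iff $f_1$ is a bijection.

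One direction is immediate. If $f$ is an isomorphism in $\sSet$, then each component $f_n$ is a bijection, because isomorphisms in a presheaf category are exactly the pointwise bijections. In particular $f_1$ is a bijection.

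For the converse, assume $f_1$ is a bijection. It suffices to show that every component $f_n : X_n \to Y_n$ is a bijection; then the pointwise inverses automatically assemble into a simplicial map, since inverses of components of a natural transformation are again natural. Via Yoneda, $X_n \cong \sSet(\Delta^n,X)$, and $f_n$ corresponds to $f_*|_{\Delta^n}$. We then use the commutative square from the proof of \cref{lemma:spinemaps}:
\[f_*|_{I_n}\circ i_n^* = i_n^*\circ f_*|_{\Delta^n}.\]
The first part of \cref{lemma:spinemaps}, which uses only that $f_1$ is a bijection, gives that $f_*|_{I_n} : \sSet(I_n,X)\to\sSet(I_n,Y)$ is a bijection for all $n\geq 0$. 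Since both $X$ and $Y$ satisfy $\iep$ (and trivially so for $n=0,1$), both vertical maps $i_n^*$ are bijections. Hence
\[f_*|_{\Delta^n} = (i_n^*)^{-1}\circ f_*|_{I_n}\circ i_n^*\]
is a composite of bijections, and therefore a bijection. This holds for every $n$, so $f$ is an isomorphism.

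The only point requiring care is the bijectivity of $f_*|_{I_n}$. If one does not want to rely on the lemma's brief justification, argue directly from the colimit presentation of $I_n$. A map $I_n\to X$ is the same as a tuple $(x_1,\dots,x_n)$ of $1$-simplices with $d_0 x_k = d_1 x_{k+1}$. The map $f$ sends such a tuple to $(f_1x_1,\dots,f_1x_n)$. Injectivity is then clear from injectivity of $f_1$.

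For surjectivity, take a composable tuple $(y_k)$ in $Y$ and set $x_k=f_1^{-1}(y_k)$. One must check that the $x_k$ still satisfy the matching conditions, i.e. $d_0x_k=d_1x_{k+1}$ in $X_0$. We have $f_0(d_0x_k)=d_0y_k=d_1y_{k+1}=f_0(d_1x_{k+1})$, so it suffices that $f_0$ is injective. This follows from $f_1$ being a bijection: the degeneracy $s_0$ is injective (it has $d_0$ as a retraction), and $f_1 s_0 = s_0 f_0$. So $f_0$ injective is the only extra ingredient, and it is the step I would expect to be the main (if mild) obstacle.
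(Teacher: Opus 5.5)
Your proof is correct and is the argument the paper leaves implicit. The paper gives no separate proof, since the corollary follows from the square in the proof of \cref{lemma:spinemaps} by writing $f_*|_{\Delta^n}=(i_n^*)^{-1}\circ f_*|_{I_n}\circ i_n^*$ as a composite of bijections, and your direct check usefully makes explicit the injectivity of $f_0$ that the paper's ``universal property of limits'' step uses without comment. Your tuple description only covers $n\geq 1$; for $n=0$ one has $f_*|_{I_0}=f_0$, which is also surjective: for $y\in Y_0$ take $x=f_1^{-1}(s_0y)$, and then $f_0(d_0x)=d_0f_1(x)=d_0s_0y=y$.
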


\begin{proposition}\label{lemma:spine-N}
	For every $\cC \in \Cat$, $N\cC$ satisfies the spine extension property. Moreover, if $X \in \sSet$ satisfies the spine extension property then $X \cong N\cC$ for some $\cC \in \Cat$.
\end{proposition}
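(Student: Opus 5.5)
The proposition has two parts, and I would treat them separately.

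\emph{First part: $N\cC$ has $\iep$.} By \cref{cor:Deltannat} and the nerve formula, $\sSet(\Delta^n,N\cC)\cong\Cat([n],\cC)$. Next I would compute $\sSet(I_n,N\cC)$. Since $I_n$ is the colimit of $n$ copies of $\Delta^1$ glued along $\Delta^0$'s, a map $I_n\to N\cC$ is a compatible family of $1$-simplices of $N\cC$, i.e.\ a composable string $c_0\xrightarrow{f_1}c_1\to\cdots\xrightarrow{f_n}c_n$ of morphisms in $\cC$. The restriction $i_n^*$ then corresponds to sending a functor $[n]\to\cC$ to its values on the generating arrows $k\to k+1$. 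This map is a bijection because a functor $[n]\to\cC$ is determined by, and freely determined by, such a string. Concretely, one sets $(i\le j)\mapsto f_j\circ\cdots\circ f_{i+1}$; functoriality then follows from associativity and identities in $\cC$. All of this is elementary and uses only the definition of $\Cat([n],\cC)$.

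\emph{Second part: $\iep$ implies $X\cong N\cC$.} Given $X$ with $\iep$, I would build $\cC$ directly from $X$:
\begin{itemize}
\item objects are $X_0$;
\item morphisms are $X_1$, with source $d_1$ and target $d_0$;
\item identities are $s_0(x)$.
\end{itemize}
For composition, take $f,g$ composable. They define a map $I_2\to X$, which by $\iep_2$ extends uniquely to $\sigma\colon\Delta^2\to X$; set $g\circ f:=d_1\sigma$. The unit laws follow from uniqueness: the degenerate $2$-simplices $s_0f$ and $s_1f$ have spines $(\mathrm{id},f)$ and $(f,\mathrm{id})$, so $d_1$ of them is $f$.

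\emph{Associativity is the main obstacle.} Take composable $f,g,h$. Using $\iep_3$, extend the spine $(f,g,h)$ to a $3$-simplex $\tau$. Its faces $d_3\tau$, $d_0\tau$, $d_2\tau$, $d_1\tau$ are $2$-simplices, and by $\iep_2$-uniqueness each is the filler of the corresponding spine. One then reads off
\[ d_1d_2\tau=d_1d_1\tau. \]
The left side is the $(0,3)$-edge computed as $h\circ(g\circ f)$, via the filler of spine $(g\circ f,h)$. The right side is $(h\circ g)\circ f$, via spine $(f,h\circ g)$. Here I need the simplicial identities to check that the spines of $d_2\tau$ and $d_1\tau$ are indeed $(g\circ f,h)$ and $(f,h\circ g)$. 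That verification is the fiddly index bookkeeping.

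\emph{Comparing $X$ with $N\cC$.} Finally, define $\phi\colon X\to N\cC$ by sending an $n$-simplex $x$ to the functor $[n]\to\cC$ determined (as in the first part) by the spine of $x$, i.e.\ its edges $\{k,k+1\}$. Two things must be checked:
\begin{itemize}
\item \emph{$\phi$ is simplicial.} Faces and degeneracies of $x$ correspond to precomposition with maps $[m]\to[n]$. For each edge $\{i,j\}$ of $x$, the edge agrees with the composite of the spine edges between $i$ and $j$. This follows by induction, using that $2$-faces are $\iep_2$ fillers.
\item \emph{$\phi$ is bijective.} By construction $\phi_1$ is a bijection.
\end{itemize}
Since $X$ and $N\cC$ both satisfy $\iep$, the corollary after \cref{lemma:spinemaps} then gives that $\phi$ is an isomorphism.

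Alternatively, and perhaps more cleanly, one could avoid proving simpliciality by hand. Each level is $X_n\cong\sSet(I_n,X)\cong\sSet(I_n,N\cC)\cong(N\cC)_n$, and naturality in $n$ reduces to the same edge-compatibility check. That check is the real content in either approach.
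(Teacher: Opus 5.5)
Your proposal is correct, and its overall architecture matches the paper's. The first part is the same observation: the paper phrases it as $I_n\star[\,]\cong\Delta^n\star[\,]$ via cocontinuity in the weight, which is exactly your statement that a functor $[n]\to\cC$ is freely determined by its string of generating arrows. In the second part, the category $\cC$, composition via $\iep_2$ and associativity via $\iep_3$ are also the same. Your comparison map (send $x$ to the functor determined by its spine) is literally the paper's $f_n=(i_n^*)^{-1}\circ f_*|_{I_n}\circ i_n^*$, i.e.\ your "alternative" route.

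Where you genuinely differ is the check that this map is simplicial.
\begin{itemize}
\item The paper treats the generators $d_i,s_i$ one at a time. It does $n=2$ directly. For $n\ge 3$ it introduces auxiliary subcomplexes $I_{n,i}\subseteq\Delta^n$ (the spine with the $2$-simplex on $\{i-1,i,i+1\}$ filled) and a pullback square, reducing inner faces to the $n=2$ case.
\item You instead prove a single edge lemma: every edge $x_{ij}$ of an $n$-simplex is the $\cC$-composite of the spine edges from $i$ to $j$. This goes by induction on $j-i$, using that the $2$-face on $\{i,j-1,j\}$ is the $\iep_2$-filler of $(x_{i,j-1},x_{j-1,j})$.
\item Compatibility with an arbitrary $\theta\colon[m]\to[n]$ is then immediate. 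The spine of $\theta^*x$ consists of the edges $x_{\theta(k)\theta(k+1)}$, and an $m$-simplex of $N\cC$ is determined by its values on the arrows $k\to k+1$.
\end{itemize}
Your route handles all simplicial operators uniformly and avoids the $I_{n,i}$ bookkeeping, and it makes explicit the real content: faces compose edges. The paper's route stays entirely within the restriction-map formalism of $\iep$. When you write yours up, include the case $\theta(k)=\theta(k+1)$ explicitly. There the edge is $s_0$ of a vertex, i.e.\ the identity, which is the empty composite.

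One small slip in your associativity paragraph: you have the two faces swapped. $d_2\tau$ lives on $\{0,1,3\}$ with spine $(f,h\circ g)$, so $d_1d_2\tau=(h\circ g)\circ f$. $d_1\tau$ lives on $\{0,2,3\}$ with spine $(g\circ f,h)$, so $d_1d_1\tau=h\circ(g\circ f)$. This is harmless, since only the identity $d_1d_2=d_1d_1$ matters.
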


\begin{proof}
	We prove the first part by showing that $N\cC$ has the spine extension property. For $n=0$ and $n=1$, this is trivially true because $I_n \cong \Delta^n$. Therefore, we only consider $n\geq 2$.
	
	We show the existence of the unique spine extension by showing the weighted colimit $I_n \star []$ exists and that the canonical map $I_n \star [] \rightarrow \Delta^n \star []$ is an isomorphism. Because weighted colimits are cocontinuous in weights, we equivalently need to show that the following diagram is a colimiting cocone in $\Cat$:
	\[\begin{tikzcd}[column sep=scriptsize]
		{\{0\}} &&& {\{1\}} && \cdots &&& {\{n\}} \\
		&& {\{0 \mapsto 1\}} && {\{1 \mapsto 2\}} && {\{n-1 \rightarrow n\}} \\
		\\
		\\
		\\
		\\
		&&&& {[n]}
		\arrow[from=1-1, to=2-3]
		\arrow[from=1-1, to=7-5]
		\arrow[from=1-4, to=2-3]
		\arrow[from=1-4, to=2-5]
		\arrow[from=1-4, to=7-5]
		\arrow[from=1-6, to=2-5]
		\arrow[from=1-6, to=2-7]
		\arrow[from=1-6, to=7-5]
		\arrow[from=1-9, to=2-7]
		\arrow[from=1-9, to=7-5]
		\arrow[from=2-3, to=7-5]
		\arrow[from=2-5, to=7-5]
		\arrow[from=2-7, to=7-5]
	\end{tikzcd}\cdpunct[8pt]{.}\]
	Thus, it is sufficient to show that specifying a morphism $[n] \rightarrow \cC$  is equivalent to specifying its action on all the objects $\{k\}$ and all the morphisms $\{k \rightarrow k+1\}$. This is true by definition of $[n]$.
	
	In the other direction, assume that $X \in \sSet$ satisfy the spine extension property. Define a category $\cC$ such that $\obj(\cC)=X_0$ and $\mor(\cC)=X_1$ with source, target, and identities derived from simplicial identities; and composition defined by the $\iep_2$ property. The only non-trivial check is that the composition is associative but this follows directly by first using $\iep_3$ to fill the $3$-simplex and then using $\iep_2$ for the uniqueness of composites.
	
	The construction of $\cC$ guarantees that $X_0 \cong N\cC_0$ and $X_1 \cong N\cC_1$. However, we do not yet have a \emph{simplicial}\footnote{In standard sources such as \cite{luriehtt2009}, the existence of a simplicial map is claimed to be self-evident from the construction of $\cC$. This is not entirely immediate to us, whether one uses spine extensions, as we do, or one uses horn extensions, as in the source. Thus, we provide a complete verification here using spine extensions, which we found to be simpler.} map $f: X \rightarrow N\cC$ that produces these bijections. We define the map on arbitrary $n$-simplices using the following composite:
	\[\begin{tikzcd}
		{\sSet(\Delta^n,X)} &&& {\sSet(\Delta^n,N\cC)} \\
		\\
		{\sSet(I_n,X)} &&& {\sSet(I_n,N\cC)}
		\arrow["{f_n :={i_n^*}^{-1} \circ f_{*}|_{I_n}\circ i_n^*}", from=1-1, to=1-4]
		\arrow["{i_n^* (\cong)}"', from=1-1, to=3-1]
		\arrow["{f_{*}|_{I_n}}"', from=3-1, to=3-4]
		\arrow["{{i_n^*}^{-1} (\cong)}"', from=3-4, to=1-4]
	\end{tikzcd}\cdpunct[8pt]{.}\]
	Here, the $n=0$ and $n=1$ case is defined by virtue of the bijections $X_0 \cong N\cC_0$ and $X_1 \cong N\cC_1$. The simplicial identities between $f_0$ and $f_1$ holds by our choice of category structure on $\cC$. And the map $f_*|_{I_n}$ is defined using $f_1$ and universal property of limits.
	
	First, we consider the compatibility between $f_{1}$, $f_{2}$ and $d_{i},s_{i}$ (i.e., $n=2$ case): 
	\[\begin{tikzcd}
		& {\sSet(\Delta^1,N\cC)} && {\sSet(\Delta^{2},N\cC)} \\
		\\
		{\sSet(\Delta^1,X)} && {\sSet(\Delta^{2},X)} & {\sSet(I_{2},N\cC)} \\
		\\
		&& {\sSet(I_{2},X)}
		\arrow["{s_i}"{description}, shift left=2, from=1-2, to=1-4]
		\arrow["{d_i}"{description}, shift left=2, from=1-4, to=1-2]
		\arrow["{i_{2}^*(\cong)}"{description}, from=1-4, to=3-4]
		\arrow["{f_1}"{description}, from=3-1, to=1-2]
		\arrow["{s_i}"{description}, shift left=2, from=3-1, to=3-3]
		\arrow["{f_2}"{description}, from=3-3, to=1-4]
		\arrow["{d_i}"{description}, shift left=2, from=3-3, to=3-1]
		\arrow["{i_{2}^*(\cong)}"{description}, from=3-3, to=5-3]
		\arrow["{f_*|_{I_2}}"{description}, from=5-3, to=3-4]
	\end{tikzcd}\cdpunct[8pt]{.}\]
	
	Because $i_2^*$ is an isomorphism, we can instead check the compatibilities between $f_1 = f_*|_{I_1}$ and $f_*|_{I_2}$. The desired result now follows because identity morphisms in $\cC$ are precisely the degenerate $1$-simplices of $X$ and because composition in $\cC$ is defined precisely using $\iep_2$ of $X$.
	
	
	We now focus on the $n\geq 3$ case. Define a simplicial set $I_{n,i}$ for all $0\leq i \leq n$ by setting $I_{n,0}=I_{n,n}=I_n$ and setting $I_{n,i}$ for $0 < i < n$ to be the following simplicial subset of $\Delta^n$:
	\[\begin{tikzcd}
		&&&& i \\
		0 & 1 & \cdots & {i-1} && {i+1} & \cdots & n
		\arrow[from=1-5, to=2-6]
		\arrow[from=2-1, to=2-2]
		\arrow[from=2-2, to=2-3]
		\arrow[from=2-3, to=2-4]
		\arrow[from=2-4, to=1-5]
		\arrow[from=2-4, to=2-6]
		\arrow[from=2-6, to=2-7]
		\arrow[from=2-7, to=2-8]
	\end{tikzcd}\cdpunct[6pt]{.}\]
	Here, the sphere formed by $\{i-1,i,i+1\}$ is filled with $\Delta^2$. In particular, for each $0 < i < n$, the simplicial set $I_{n,i}$ provides a canonical map $d^i : I_{n-1} \hookrightarrow I_{n,i}$ and the following auxiliary diagrams:
	\[\begin{tikzcd}
		{\Delta^{n-1}} && {\Delta^n} &&&& {\sSet(\Delta^n,X)} && \\
		\\
		{I_{n-1}} && {I_{n,i}} && {I_n} && {\sSet(I_{n,i},X)} && {\sSet(I_{n},X)} \\
		\\
		&& {\Delta^2} && {I_2} && {\sSet(\Delta^2,X)} && {\sSet(I_{2},X)}
		\arrow["{d^i}", hook, from=1-1, to=1-3]
		\arrow["{(2-\mathrm{o}-3)\cong}"', from=1-7, to=3-7]
		\arrow["{i_n^*(\cong)}", from=1-7, to=3-9]
		\arrow["{i_{n-1}}", hook', from=3-1, to=1-1]
		\arrow["{d^i}", hook, from=3-1, to=3-3]
		\arrow[hook, from=3-3, to=1-3]
		\arrow["\ulcorner"{anchor=center, pos=0.125}, draw=none, from=3-3, to=5-5]
		\arrow["{i_n}"', hook, from=3-5, to=1-3]
		\arrow[hook, from=3-5, to=3-3]
		\arrow["{(\mathrm{pb})\cong}", from=3-7, to=3-9]
		\arrow[from=3-7, to=5-7]
		\arrow["\ulcorner"{anchor=center, pos=0.125}, draw=none, from=3-7, to=5-9]
		\arrow[from=3-9, to=5-9]
		\arrow[hook', from=5-3, to=3-3]
		\arrow[hook, from=5-5, to=3-5]
		\arrow["{i_2}", hook', from=5-5, to=5-3]
		\arrow["{i_2^*(\cong)}", from=5-7, to=5-9]
	\end{tikzcd}\cdpunct[8pt]{.}\]
	
	
	Analogous to $d^i : I_{n-1} \hookrightarrow I_{n,i}$ as above, let $s^i : I_{n} \hookrightarrow I_{n-1}$ be the restriction of the map $s^i : \Delta^{n} \hookrightarrow \Delta^{n-1}$ along $i_n$ and $i_{n-1}$. We then have the following diagrams expressing the compatibility relations:
	
	\[\begin{tikzcd}
		& {\sSet(I_{n-1},N\cC)} && {\sSet(I_n,N\cC)} \\
		\\
		{\sSet(I_{n-1},X)} && {\sSet(I_{n},X)} \\
		\\
		& {\sSet(I_{n-1},N\cC)} & {\sSet(I_{n,i},N\cC)} & {\sSet(I_n,N\cC)} \\
		\\
		{\sSet(I_{n-1},X)} & {\sSet(I_{n,i},X)} & {\sSet(I_{n},X)}
		\arrow["{s_i}", from=1-2, to=1-4]
		\arrow["{f_*|_{I_{n-1}}}", from=3-1, to=1-2]
		\arrow["{s_i}"', from=3-1, to=3-3]
		\arrow["{f_*|_{I_n}}"', from=3-3, to=1-4]
		\arrow["{d_i}"', from=5-3, to=5-2]
		\arrow["\cong", from=5-3, to=5-4]
		\arrow["{f_*|_{I_{n-1}}}", from=7-1, to=5-2]
		\arrow["{(\mathrm{pb})\exists!}", dashed, from=7-2, to=5-3]
		\arrow["{d_i}"', from=7-2, to=7-1]
		\arrow["\cong", from=7-2, to=7-3]
		\arrow["{f_*|_{I_{n}}}"', from=7-3, to=5-4]
	\end{tikzcd}\cdpunct[8pt]{.}\]

	The desired compatibility relations then follows exactly for the same reasons as the $n=2$ case. Finally, we can transfer these compatibility relations to $f_{n-1} ,f_{n}$ using the spine extension isomorphisms because everything commutes. This finishes the proof.
\end{proof}

This lemma yields two important immediate corollaries.

\begin{corollary}
	The nerve functor $N$ is fully faithful.
\end{corollary}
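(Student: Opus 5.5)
We need to show that for $\cC,\cD \in \Cat$ the map $N : \Cat(\cC,\cD) \rightarrow \sSet(N\cC,N\cD)$, $F \mapsto F_*$, is a bijection. The plan is to deduce both faithfulness and fullness from \cref{lemma:spinemaps} and \cref{lemma:spine-N}, using that a simplicial map between nerves is determined by its action in degrees $0$ and $1$.

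\emph{Faithfulness.} Let $F,G : \cC \rightarrow \cD$ with $NF = NG$. Under the natural isomorphisms $N\cC_0 \cong \obj\cC$ and $N\cC_1 \cong \mor\cC$ (given by $\Cat([n],\cC)$ for $n=0,1$), the components $(NF)_0$ and $(NF)_1$ are exactly the actions of $F$ on objects and on morphisms. Hence $(NF)_1 = (NG)_1$ gives $F = G$ on morphisms, and $(NF)_0 = (NG)_0$ gives $F = G$ on objects, so $F = G$.

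\emph{Fullness.} Let $\phi : N\cC \rightarrow N\cD$ be a simplicial map. Define $F$ on objects by $\phi_0$ and on morphisms by $\phi_1$. Then:
\begin{enumerate}
	\item Sources and targets are preserved because $\phi$ commutes with the face maps $d_0,d_1 : N\cC_1 \rightarrow N\cC_0$.
	\item Identities are preserved because $\phi$ commutes with the degeneracy $s_0 : N\cC_0 \rightarrow N\cC_1$.
	\item For composition, take a composable pair $(g,f)$ in $\cC$. By the first part of \cref{lemma:spine-N}, it corresponds to a unique $2$-simplex $\sigma : [2] \rightarrow \cC$, namely the one with $d_2\sigma = f$, $d_0\sigma = g$ and $d_1\sigma = g\circ f$. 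Then $\phi_2(\sigma)$ is a $2$-simplex of $N\cD$, that is, a functor $[2] \rightarrow \cD$. Since $\phi$ commutes with the $d_i$, its spine is $(Ff,Fg)$ and its long edge is $F(g\circ f)$. Because $\phi_2(\sigma)$ is a functor, its long edge is also $Fg\circ Ff$, so $F(g\circ f) = Fg \circ Ff$.
\end{enumerate}
Thus $F$ is a functor. Now $NF$ and $\phi$ agree on $1$-simplices, and $N\cD$ satisfies $\iep$ by \cref{lemma:spine-N}. The second part of \cref{lemma:spinemaps} then gives $NF = \phi$.

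The main obstacle is bookkeeping rather than any real difficulty. One must check carefully that the identification $N\cC_1 \cong \mor \cC$ is compatible with $d_0,d_1,s_0$, and that the face maps of a $2$-simplex of a nerve pick out $f$, $g$ and $g\circ f$ in the order stated. This follows from the formula $N(\cC)_n \cong \Cat([n],\cC)$ with faces and degeneracies acting by precomposition with the coface and codegeneracy maps of $\bDelta$.
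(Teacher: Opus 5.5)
Your proposal is correct and follows essentially the same route as the paper: build $F$ from $\phi_0,\phi_1$, then use the spine extension property of $N\cD$ (the uniqueness part of the lemma on maps determined by their $1$-simplices) to conclude $NF=\phi$ uniquely. You make explicit the functoriality check, namely composition via the image of a $2$-simplex, which the paper compresses into ``the simplicial identities''. You also rightly observe that only the target $N\cD$, not $N\cC$, needs the spine extension property, and that $\phi_1$ need not be a bijection for that uniqueness argument.
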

\begin{proof}
	Every simplicial map $f : N\cC \rightarrow N\cD$ yields a functor $F : \cC \rightarrow \cD$ using $f_0, f_1$, and the simplicial identities. Since both $N\cC$ and $N\cD$ satisfy the $\iep$ property this ensures that $NF = f$ uniquely.
\end{proof}

\begin{corollary}
	The weighted colimit $N\cC \star []$ exists for any $\cC \in \Cat$ and satisfy $N\cC \star [] \cong \cC$ naturally in $\cC$. In particular, if some $X \in \sSet$ satisfies $X \cong N \cC$ then $X \star [] \cong \cC$.
\end{corollary}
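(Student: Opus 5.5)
By \cref{defn:wcolim}, it suffices to exhibit, for each $\cC \in \Cat$, an isomorphism
\[\Cat(\cC,\cD) \cong \sSet(N\cC, \Cat([-],\cD)) = \sSet(N\cC,N\cD)\]
natural in $\cD \in \Cat$. The candidate map is the action of the nerve functor on morphisms, $F \mapsto NF$. Here $\Cat([-],\cD) = N\cD$ holds by the pointwise formula for the nerve, and $N$ preserves composition. Hence precomposing $F$ with any $G : \cD \to \cD'$ gives $N(G\circ F) = NG \circ NF$, so the map is natural in $\cD$. Bijectivity is exactly the preceding corollary that $N$ is fully faithful. This proves that $N\cC$ itself is a universal object $N\cC \star []$, that is, $N\cC \star [] \cong \cC$.

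For concreteness, the universal weighted cocone $\lambda \in \sSet(N\cC, N\cC)$ is the image of $1_\cC$, namely $1_{N\cC}$. Its component at $n$ sends an $n$-simplex $\sigma : [n] \to \cC$ to $\sigma$ itself, regarded as a morphism $[n] \to \cC$ in $\Cat$.

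Next I would check naturality in $\cC$. For $G : \cC \to \cC'$, the induced map $N\cC \star [] \to N\cC' \star []$ is characterized by the Yoneda lemma as the unique functor whose pullback of $\lambda_{\cC'}$ equals $\lambda_{\cC'} \circ NG$. Under the identification above this functor is $G$, since $N$ is faithful and $NG$ is the corresponding simplicial map. So the isomorphisms $N\cC \star [] \cong \cC$ assemble into a natural isomorphism.

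For the final clause, suppose $X \cong N\cC$ via $\phi$. The right-hand side $\sSet(X, \Cat([-],\cD))$ of the defining isomorphism is functorial, hence invariant under isomorphism, in the weight $X$. Composing with $\phi^*$ therefore gives $\sSet(X,N\cD) \cong \sSet(N\cC,N\cD) \cong \Cat(\cC,\cD)$ naturally in $\cD$. This exhibits $\cC$ as $X \star []$.

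The only subtle point is avoiding circularity. The argument uses only full faithfulness of $N$, which was established directly via \cref{lemma:spine-N} and \cref{lemma:spinemaps}, and no cocompleteness of $\Cat$. I expect no real obstacle beyond carefully matching the weighted-colimit data: the cocone must be identified with $1_{N\cC}$ so that naturality in both variables is transparent.
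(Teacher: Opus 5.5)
Your proposal is correct and follows the paper's own argument: the paper also exhibits $\cC$ as $N\cC \star []$ through the bijection $\Cat(\cC,\cD) \cong \sSet(N\cC, N\cD)$, natural in $\cD$, supplied by full faithfulness of $N$; your naturality check in $\cC$ and the transfer along $X \cong N\cC$ just spell out what the paper leaves implicit. Two small slips to fix: the universal object is $\cC$, not $N\cC$, and the functor $u$ induced by $G$ is characterized by $Nu \circ \lambda_{\cC} = \lambda_{\cC'} \circ NG$ (you push forward $\lambda_{\cC}$, you do not pull back $\lambda_{\cC'}$), which still gives $u = G$ by faithfulness.
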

\begin{proof}
	The defining natural isomorphism $\Cat(\cC,\cD) \cong \sSet(N\cC,\Cat([],\cD))$ of the fully faithful functor $N$
	implies that $\cC$ satisfies the universal property of the weighted colimit $N\cC \star []$.
\end{proof}

Because $N$ is fully faithful, the existence of the left adjoint $h$ also implies that $N$ is a \textit{reflective embedding}. As a result, we have an equivalence between the existence of the weighted colimits of the form $X \star []$ and the cocompleteness of $\Cat$ \cite[Prop.~4.5.15]{reihlcat}. Furthermore, the existence of the reflective embedding automatically ensures completeness. 

\begin{proposition}\label{prop:wlim}
	The category $\Cat$ is cocomplete iff for every $X \in \sSet$, the weighted colimit $X \star []$ exists. Furthermore, under these equivalent conditions, $\Cat$ is complete.
\end{proposition}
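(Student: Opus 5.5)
The plan is to prove the two implications separately, and then derive completeness from the reflective embedding. Throughout, $N$ is fully faithful by the corollary following \cref{lemma:spine-N}.

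\emph{Forward direction.} Assume $\Cat$ is cocomplete. By the corollary stating that cocompleteness of $\cD$ implies existence of all weighted colimits, the weighted colimit $X \star []$ exists for every weight $X \in \sSet$. Concretely, by the dual of \cref{lemma:wlimislim} it is computed as the ordinary colimit $\Colim_{\el X} [\pi(-)]$ over the category of simplices. This step is immediate.

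\emph{Reverse direction.} Suppose every $X \star []$ exists. By the corollary characterizing $h$, these assemble into a functor $h : \sSet \to \Cat$ with $h \dashv N$ and $h(X) \cong X \star []$. Since $N$ is fully faithful, the counit $hN\cC \to \cC$ is an isomorphism; this matches the corollary $N\cC \star [] \cong \cC$. So $\Cat$ is a reflective subcategory of the cocomplete category $\sSet$.

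Given a small diagram $D : \cJ \to \Cat$, I would define $\Colim D := h(\Colim_{\sSet} ND)$ and check the universal property through the chain
\[\Cat(h(\Colim ND),\cC) \cong \sSet(\Colim ND, N\cC) \cong \Lim_j \sSet(ND_j,N\cC) \cong \Lim_j \Cat(D_j,\cC),\]
which is natural in $\cC$. The last isomorphism uses full faithfulness of $N$. This is exactly the argument of \cite[Prop.~4.5.15]{reihlcat}, and I would cite it.

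\emph{Completeness.} For a diagram $D$, form $L := \Lim_{\sSet} ND$, computed pointwise. The main step is to show that $L$ lies in the essential image of $N$. I would do this via \cref{lemma:spine-N}: the spine extension property says that $\sSet(\Delta^n,-) \to \sSet(I_n,-)$ is a bijection. Both functors are representable, hence preserve limits, and a limit of bijections is a bijection. So $L$ satisfies $\iep$, and therefore $L \cong N\cE$ for some $\cE \in \Cat$.

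Full faithfulness then gives
\[\Cat(\cC,\cE) \cong \sSet(N\cC,L) \cong \Lim_j \Cat(\cC,D_j),\]
so $\cE$ is the limit of $D$. Alternatively, the standard argument is that the counit isomorphism makes $\Lim D \cong hL$. The argument for completeness does not even need $h$.

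I expect the main obstacle to be bookkeeping rather than mathematics. One point needs care: the cited reflective-subcategory result assumes only that a left adjoint exists, not that $\Cat$ is cocomplete. This is what avoids the circularity the paper warns about.
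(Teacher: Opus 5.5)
Your proof is correct. For the equivalence itself you follow the paper. The forward direction is the corollary that a cocomplete category has all weighted colimits. The converse assembles the $X \star []$ into $h \dashv N$ and applies \cite[Prop.~4.5.15]{reihlcat} to the fully faithful $N$; your displayed chain of isomorphisms is just the proof of that result for colimits.

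You genuinely diverge on completeness. The paper gets it from the same citation, i.e.\ from closure of a reflective full subcategory under limits. That argument runs through the unit of $h \dashv N$, so it is only available under the hypothesis.

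You instead note that $\iep$ says the natural map $\sSet(\Delta^n,-) \Rightarrow \sSet(I_n,-)$ between two limit-preserving functors is bijective. Hence $\Lim_{\sSet} ND$ again has $\iep$, and by the converse half of \cref{lemma:spine-N} it is a nerve $N\mathcal{E}$. Full faithfulness then identifies $\mathcal{E}$ as the limit. This gives more than the proposition asks: $\Cat$ is complete unconditionally, $N$ creates limits, and the limit is explicit, with objects $\Lim_j \obj D_j$ and morphisms $\Lim_j \mor D_j$.

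The cost is that you depend on the essential-image characterization ($\iep$ implies being a nerve), the more laborious half of \cref{lemma:spine-N}. The paper's route to this proposition needs only full faithfulness of $N$, which comes from the first half together with \cref{lemma:spinemaps}.
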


\subsection{\tpdf{Nerve and $2$-coskeleton}}

Our goal in this section is to discuss two canonical constructions to extract the $n$-dimensional information out of a simplicial set and describe how the nerve of a simplicial set is inherently $2$-dimensional. Afterwards, we explore the relevance of this $2$-dimensionality for the existence of $h$.

\begin{lemma}
	Consider the restriction functor $j_n : \bDelta_{\leq n} \hookrightarrow \bDelta$ of the simplex category given by the full subcategory on all ordinals $k \leq n$. There is the following triple of adjunction induced by the global left and right Kan extension:
	\[\begin{tikzcd}
		{\sSet^{\leq n}} &&& \sSet
		\arrow[""{name=0, anchor=center, inner sep=0}, "{\Sk_n}", curve={height=-18pt}, from=1-1, to=1-4]
		\arrow[""{name=1, anchor=center, inner sep=0}, "{\Cosk_n}"', curve={height=18pt}, from=1-1, to=1-4]
		\arrow[""{name=2, anchor=center, inner sep=0}, "{(-)_{\leq n}}"{description}, from=1-4, to=1-1]
		\arrow["\dashv"{anchor=center, rotate=-90}, draw=none, from=0, to=2]
		\arrow["\dashv"{anchor=center, rotate=-90}, draw=none, from=2, to=1]
	\end{tikzcd}\cdpunct{.}\]
\end{lemma}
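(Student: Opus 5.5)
The plan is to obtain $\Sk_n$ and $\Cosk_n$ as the global left and right Kan extensions along $j_n^{\op} : \bDelta_{\leq n}^{\op} \hookrightarrow \bDelta^{\op}$. Here $\sSet^{\leq n} := \Set^{\bDelta_{\leq n}^{\op}}$, and the restriction $(-)_{\leq n}$ is precomposition with $j_n^{\op}$. Since $\Set$ is complete and cocomplete, the corollary that ordinary (co)completeness implies weighted (co)completeness guarantees that all the weighted colimits $\bDelta^{\op}(j_n^{\op}-,m)\star Y$ and weighted limits $\{\bDelta^{\op}(m,j_n^{\op}-),Y\}$ exist for every $Y \in \sSet^{\leq n}$ and every $m$. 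Concretely,
\[\Sk_n Y(m) \cong \bDelta(m,j_n-) \star Y, \qquad \Cosk_n Y(m) \cong \{\bDelta(j_n-,m), Y\}.\]
By the lemma characterising pointwise left Kan extensions, these assemble into a pointwise left Kan extension $\Lan_{j_n^{\op}} Y$, and dually into $\Ran_{j_n^{\op}} Y$. Using the colimit formula from the corollary after \cref{lemma:wlimislim}, they can even be written as ordinary colimits and limits over categories of elements, computed in $\Set$. This matters because we must not rely on anything about $\Cat$.

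Next we make these constructions functorial in $Y$. The Kan extensions satisfy the natural isomorphisms
\[\sSet(\Lan_{j_n^{\op}} Y, X) \cong \sSet^{\leq n}(Y, X\circ j_n^{\op}), \qquad \sSet(X, \Ran_{j_n^{\op}} Y) \cong \sSet^{\leq n}(X\circ j_n^{\op}, Y).\]
These come from the second characterisation of (left and right) Kan extensions, and they are natural in $X$. By the functoriality statement in that lemma, they are also natural in $Y$. Explicitly, a map $Y \to Y'$ induces a map of weighted colimits and limits via the $2$-ary functors $-\star-$ and $\{-,-\}$, which are functorial in the diagram argument. Setting $\Sk_n := \Lan_{j_n^{\op}}$ and $\Cosk_n := \Ran_{j_n^{\op}}$ on objects and morphisms, the two bijections then exhibit precisely the adjunctions $\Sk_n \dashv (-)_{\leq n}$ and $(-)_{\leq n} \dashv \Cosk_n$.

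The main obstacle is bookkeeping rather than substance. One must check that naturality in both variables really holds, so that one gets adjunctions and not merely objectwise bijections. The cleanest route is the standard fact that a family of representing objects, chosen for each $Y$ with isomorphisms natural in $X$, extends uniquely to a functor making the isomorphisms natural in $Y$. This is just Yoneda, and the paper already uses the same argument when assembling $-\star-$ into a $2$-ary functor. One can also add that, since $j_n$ is fully faithful, the unit $Y \to (\Sk_n Y)_{\leq n}$ and the counit $(\Cosk_n Y)_{\leq n} \to Y$ are isomorphisms. This follows from the unit lemmas $\Yo(c)\star F\cong F(c)$ and $\{\cC(c,-),F\}\cong F(c)$, because $\bDelta(k,j_n-)$ is representable for $k\leq n$. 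The lemma does not require this, but it will be useful later.
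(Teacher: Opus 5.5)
Your proposal is correct and follows the paper's proof. Both construct $\Sk_n$ and $\Cosk_n$ as global Kan extensions along $j_n^{\op}$, computed pointwise by the weighted (co)limits $\bDelta(m,j_n-)\star Y$ and $\{\bDelta(j_n-,m),Y\}$. You go further in two places: you spell out the naturality in $Y$ needed to get genuine adjunctions, and you correctly attribute existence to the (co)completeness of $\Set$, where the paper's proof says $\sSet$.
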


\begin{proof}
	The category $\sSet$ is both complete and cocomplete. Therefore, the left and the right Kan extension exist for all $X_{\leq n} \in \sSet^{\leq n}$ along $(j_n)^{\op}$. In particular, $\Sk_n (X_{\leq n})(n') := \bDelta(n',j_n(-)) \star X(-)$ and $\Cosk_n (X_{\leq n})(n') := \{\bDelta(j_n(-),n'),X(-)\}$, where $n' \in \bDelta$.
\end{proof}

Moreover, the functor $j_n$ is fully faithful. In particular, this implies that the unit $\eta^{\Sk} : 1 \Rightarrow (-)_{\leq n} \circ \Sk_n$ and the counit $\epsilon^{\Cosk} : (-)_{\leq n} \circ \Cosk_n \Rightarrow 1$ are both natural isomorphisms. Equivalently, both $\Sk_n$ and $\Cosk_n$ are fully faithful functors. We also define two other composite functors $\sk_n = \Sk_n \circ (-)_{\leq n}$ and $\cosk_n =  \Cosk_n \circ (-)_{\leq n}$, which further fit into the adjunction 
\[\begin{tikzcd}
	\sSet && \sSet
	\arrow[""{name=0, anchor=center, inner sep=0}, "{\sk_n}", shift left=2, from=1-1, to=1-3]
	\arrow[""{name=1, anchor=center, inner sep=0}, "{\cosk_n}", shift left=2, from=1-3, to=1-1]
	\arrow["\dashv"{anchor=center, rotate=-90}, draw=none, from=0, to=1]
\end{tikzcd}\cdpunct[0pt]{,}\]
by virtue of composition of adjoints. These composed adjoints are called the \emph{$n$-skeleton} and \emph{$n$-coskeleton} functor respectively. Correspondingly, the functor $(-)_{\leq n}$ is called \emph{$n$-truncation}.

\begin{definition}\label{defn:skcosk}
	A simplicial set $X$ is called $n$-skeletal if the counit map $\Sk_n X_{\leq n}:= \sk_n X \rightarrow X$ of the adjunction $\Sk_n \dashv (-)_{\leq n}$ is an isomorphism. Similarly, $X$ is called $n$-coskeletal if the unit map $X \rightarrow \cosk_n X := \Cosk_n X_{\leq n}$ of the adjunction $(-)_{\leq n} \dashv \Cosk_n$ is an isomorphism.
\end{definition}

The embedding $N$ interacts nicely with the $2$-skeleton and $2$-coskeleton adjunction by virtue of $N$ being $2$-coskeletal\footnote{This is very standard to check by verifying the unique extension property along the maps $\partial \Delta^n \hookrightarrow \Delta^n$ for all $n\geq 3$.}, i.e., $\eta^{\Cosk_2}N$ is a natural isomorphism.  Then, we have the following lemma and corollary which capture the key result.

\begin{lemma}
	The map $\sk_2 X \star [] \rightarrow X \star []$, induced by the counit map $\epsilon^{\sk_2}_X : \sk_2 X \rightarrow X$ of the adjunction $\sk_2 \dashv (-)_{\leq 2}$, is an isomorphism. In particular, the weighted colimit $X \star []$ exists iff the weighted colimit $\sk_2 X \star []$ exists.
\end{lemma}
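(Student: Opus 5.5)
The plan is to compare the two universal properties directly rather than to compute either colimit. By \cref{defn:wcolim}, $X \star []$ is an object representing the functor $\cD \mapsto \sSet(X, \Cat([-],\cD)) = \sSet(X, N\cD)$, and similarly for $\sk_2 X \star []$. So it suffices to show one thing. Precomposition with the counit $\epsilon^{\sk_2}_X : \sk_2 X \rightarrow X$ should induce a bijection
\[\sSet(X, N\cD) \xrightarrow{(\epsilon^{\sk_2}_X)^*} \sSet(\sk_2 X, N\cD),\]
natural in $\cD \in \Cat$. Granting this, the two represented functors $\Cat \rightarrow \Set$ are naturally isomorphic. Hence a representing object for one is a representing object for the other, which gives the ``iff''. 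Moreover, the map $\sk_2 X \star [] \rightarrow X \star []$ induced by $\epsilon^{\sk_2}_X$ corresponds under Yoneda to this natural isomorphism, so it is an isomorphism. Naturality in $\cD$ is immediate, since $(\epsilon^{\sk_2}_X)^*$ is precomposition while functoriality in $\cD$ is postcomposition with $NF$.

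For the bijection I will route both sides through the truncation $(-)_{\leq 2}$, using only the triple adjunction $\Sk_2 \dashv (-)_{\leq 2} \dashv \Cosk_2$, the fact that $\Sk_2$ and $\Cosk_2$ are fully faithful, and the stated fact that $N\cD$ is $2$-coskeletal. Write $Y = N\cD$.
\begin{enumerate}
\item Since the unit $Y \rightarrow \cosk_2 Y = \Cosk_2 Y_{\leq 2}$ is an isomorphism, the adjunction $(-)_{\leq 2} \dashv \Cosk_2$ gives $\sSet(X,Y) \cong \sSet^{\leq 2}(X_{\leq 2}, Y_{\leq 2})$. Unwinding the unit, this bijection is just $f \mapsto f_{\leq 2}$.
\item The adjunction $\Sk_2 \dashv (-)_{\leq 2}$ gives $\sSet(\sk_2 X, Y) = \sSet(\Sk_2 X_{\leq 2}, Y) \cong \sSet^{\leq 2}(X_{\leq 2}, Y_{\leq 2})$. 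This sends $g$ to $g_{\leq 2} \circ \eta^{\Sk}_{X_{\leq 2}}$, where $\eta^{\Sk}$ is an isomorphism because $j_2$ is fully faithful.
\end{enumerate}

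The remaining check is that composing the first bijection with the inverse of the second recovers $f \mapsto f \circ \epsilon^{\sk_2}_X$. Equivalently, I need $(f\circ\epsilon^{\sk_2}_X)_{\leq 2} \circ \eta^{\Sk}_{X_{\leq 2}} = f_{\leq 2}$. This follows from the triangle identity $(\epsilon^{\sk_2}_X)_{\leq 2} \circ \eta^{\Sk}_{X_{\leq 2}} = 1_{X_{\leq 2}}$ of the adjunction $\Sk_2 \dashv (-)_{\leq 2}$, together with functoriality of truncation.

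I expect this bookkeeping to be the main, if modest, obstacle. One has to make sure the abstract isomorphism $\sSet(X,Y) \cong \sSet(\sk_2 X, Y)$, obtained from $\sk_2 \dashv \cosk_2$ and $Y \cong \cosk_2 Y$, is really the one induced by the specific map $\epsilon^{\sk_2}_X$; otherwise the ``induced map is an isomorphism'' part would not follow. Throughout, no colimit in $\Cat$ is used, only colimits and limits in $\sSet$ and $\sSet^{\leq 2}$, so there is no circularity.
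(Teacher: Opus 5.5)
Your proposal is correct and follows essentially the paper's argument. Both prove that precomposition with $\epsilon^{\sk_2}_X$ is a bijection $\sSet(X,N\cD)\rightarrow\sSet(\sk_2 X,N\cD)$ by identifying it with an adjunction bijection composed with the invertible coskeletal unit of $N\cD$, and then transfer this through the defining isomorphism of the weighted colimits; the paper routes this through the composite adjunction $\sk_2\dashv\cosk_2$ with a commuting diagram, while you factor through $\sSet^{\leq 2}(X_{\leq 2},(N\cD)_{\leq 2})$ using the triangle identity of $\Sk_2\dashv(-)_{\leq 2}$, which is only a bookkeeping difference.
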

\begin{proof}
	We have the following diagram for all $X,Y \in \sSet$:
	\[\begin{tikzcd}
		{\sSet(X,Y)} &&&& {\sSet(X,\cosk_2 Y)} \\
		\\
		&&&& {\sSet(\sk_2 X, \sk_2\cosk_2 Y)} \\
		\\
		&&& {\sSet(\sk_2 X,\sk_2Y)} \\
		\\
		&&& {\sSet(\sk_2X,Y)}
		\arrow["{\eta^{\Cosk_2}_{Y}}", from=1-1, to=1-5]
		\arrow["{\sk_2}", from=1-1, to=5-4]
		\arrow["{\epsilon^{\sk_2}_X}"', from=1-1, to=7-4]
		\arrow["{\sk_2}", from=1-5, to=3-5]
		\arrow["{\Sk_2 \cdot\epsilon^{\Cosk_2}\cdot (-)_{\leq 2}}"', from=3-5, to=5-4]
		\arrow["{\epsilon^{\sk_2 \dashv \cosk_2}_Y}", from=3-5, to=7-4]
		\arrow["{\epsilon^{\Sk_2}_{Y}}"{description}, from=5-4, to=7-4]
	\end{tikzcd}\cdpunct[8pt]{.}\]

	Here, for the brevity of notation, we are \emph{suppressing} the postcompose and precompose scripts. 
	
	To show that the given diagram commutes, we note the following: The upper square commutes by applying $\Sk_2$ functor to the triangle inequality $\epsilon^{\cosk_2}(-)_{\leq 2} \circ (-)_{\leq 2} \eta^{\cosk_2} = 1_{(-)_{\leq 2}}$. The bottom right triangle is the definition of induced counit for the composite adjoint. Lastly, the bottom left triangle is the naturality of counit map $\epsilon^{\Sk_2}$.
	
	The map $\epsilon^{\sk_2 \dashv \cosk_2}_Y \circ \sk_2$ is simply the adjunction (hom set) isomorphism. If we take $Y$ to be $2$-coskeletal then $\eta^{\Cosk_2}_Y$ is also an isomorphism. Hence, the resultant map $\epsilon^{\sk_2}_X$ is an isomorphism. That is, $\sSet(X,Y)\xrightarrow{\epsilon^{\sk_2}_X} \sSet(\sk_2 X,Y)$ is an isomorphism as desired.
	
	Finally, by definition of weighted colimits, the induced map $\sk_2 X \star [] \rightarrow X \star []$ is an isomorphism and one of the weighted colimits exist iff the other one does.  
\end{proof}

\begin{corollary}\label{cor:sk2catcom}
	The category $\Cat$ is cocomplete iff for every $X \in \sSet$ the weighted colimit $\sk_2X \star []$ exists. Furthermore, $\Cat$ is complete under these equivalent conditions.
\end{corollary}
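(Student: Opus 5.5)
This corollary follows by combining \cref{prop:wlim} with the preceding lemma. By \cref{prop:wlim}, $\Cat$ is cocomplete iff $X \star []$ exists for every $X \in \sSet$, and in that case $\Cat$ is also complete. So it suffices to show that, as $X$ ranges over all of $\sSet$, the weighted colimits $X \star []$ all exist iff the weighted colimits $\sk_2 X \star []$ all exist.

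Fix $X \in \sSet$. The preceding lemma gives this equivalence pointwise: the counit $\epsilon^{\sk_2}_X : \sk_2 X \rightarrow X$ induces, for every $\cD \in \Cat$, a bijection
\[\sSet(X, N\cD) \xrightarrow{\cong} \sSet(\sk_2 X, N\cD).\]
This bijection is natural in $\cD$, because it is precomposition with a fixed map. The bijection holds because $N\cD$ is $2$-coskeletal. Since $X \star []$ and $\sk_2 X \star []$ are defined as representing objects for the functors
\[\cD \mapsto \sSet(X, \Cat([-],\cD)) = \sSet(X,N\cD), \qquad \cD \mapsto \sSet(\sk_2 X, N\cD),\]
these two functors are naturally isomorphic. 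Hence one is representable iff the other is, and a representing object for one serves for the other. Quantifying over all $X$ gives the first claim, and the completeness statement is then inherited directly from \cref{prop:wlim}.

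The only point needing care is that no circularity enters, and I expect this to be the main (mild) obstacle. The argument uses only that $N\cD$ is $2$-coskeletal for each $\cD$ and that $\sk_2 \dashv \cosk_2$ exists on $\sSet$. Both hold independently of any cocompleteness of $\Cat$, since $\sSet$ is (co)complete and the coskeletality is checked directly on nerves.

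Equivalently, one can phrase the condition as ranging over $2$-skeletal weights: every $2$-skeletal $Y$ satisfies $Y \cong \sk_2 Y$, so the condition "$\sk_2 X \star []$ exists for all $X$" is the same as "$Y \star []$ exists for all $2$-skeletal $Y$". This reformulation is what a subsequent proof would exploit via the skeletal filtration.
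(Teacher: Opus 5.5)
Your proposal is correct and follows the paper's route. The corollary is obtained by combining Proposition~\ref{prop:wlim} with the preceding lemma. That lemma's content is exactly your observation: precomposition with the counit $\epsilon^{\sk_2}_X$ gives a bijection $\sSet(X,N\cD)\cong\sSet(\sk_2 X,N\cD)$, natural in $\cD$, because $N\cD$ is $2$-coskeletal, so $X\star[]$ and $\sk_2X\star[]$ represent the same functor. Quantifying over all $X$ and inheriting completeness from Proposition~\ref{prop:wlim} then finishes the argument, just as you describe.
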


	\section{Existence of the left adjoint}
	\subsection{Weighted colimits and the left adjoint}\label{sec:ladj}

The \cref{cor:sk2catcom} ensures the existence of the left adjoint $h$, whenever the colimits $\sk_2 X \star []$ exist for all $X \in \sSet$. As a consequence, it suffices to work with $2$-skeletal filtration of simplicial sets. Recall that any simplicial set $X$ admits the following $2$-skeletal filtration by non-degenerate simplices \cite[p.~8--9]{jardinesht}: 
\[\begin{tikzcd}[column sep=scriptsize]
	{\Coprod\limits_{X_1^{\nd}}\partial\Delta^1} && {\Coprod\limits_{X^{\nd}_1} \Delta^1} && {\Coprod\limits_{X^{\nd}_2} \partial \Delta^2} && {\Coprod\limits_{X^{\nd}_2} \Delta^2} \\
	\\
	& {\sk_0X} && {\sk_1X} && {\sk_2X} && X
	\arrow[hook, from=1-1, to=1-3]
	\arrow[from=1-1, to=3-2]
	\arrow[from=1-3, to=3-4]
	\arrow[hook, from=1-5, to=1-7]
	\arrow[from=1-5, to=3-4]
	\arrow[from=1-7, to=3-6]
	\arrow[hook, from=3-2, to=3-4]
	\arrow["\ulcorner"{anchor=center, pos=0.125, rotate=180}, shift left=3, draw=none, from=3-4, to=1-3]
	\arrow[hook, from=3-4, to=3-6]
	\arrow["\ulcorner"{anchor=center, pos=0.125, rotate=180}, draw=none, from=3-6, to=1-5]
	\arrow[hook, from=3-6, to=3-8]
\end{tikzcd}\cdpunct[8pt]{.}\]
Our goal is to utilize this filtration to aid the computation of the weighted colimits $X \star []$. Recall that weighted colimits are cocontinuous in the weights argument and notice that the skeletal filtration diagram of $X$ is comprised entirely of colimits and composites of maps. Hence, we obtain the following diagram containing only weighted colimits, coproducts, and pushouts:
\[\begin{tikzcd}[column sep=tiny]
	{\Coprod\limits_{X^{\nd}_1} \partial \Delta^1 \star []} && {\Coprod\limits_{X^{\nd}_1} \Delta^1 \star []} && {\Coprod\limits_{X^{\nd}_2} \partial \Delta^2 \star []} && {\Coprod\limits_{X^{\nd}_2} \Delta^2 \star []} \\
	\\
	\\
	& {\sk_0 X \star []} && {\sk_1 X \star []} && {\sk_2 X \star  []} & {X \star []}
	\arrow[""{name=0, anchor=center, inner sep=0}, from=1-1, to=1-3]
	\arrow[from=1-1, to=4-2]
	\arrow[from=1-3, to=4-4]
	\arrow[from=1-5, to=1-7]
	\arrow[from=1-5, to=4-4]
	\arrow[from=1-7, to=4-6]
	\arrow[from=4-2, to=4-4]
	\arrow[from=4-4, to=4-6]
	\arrow["\ulcorner"{anchor=center, pos=0.125, rotate=180}, draw=none, from=4-6, to=1-5]
	\arrow["\cong", no head, from=4-6, to=4-7]
	\arrow["\ulcorner"{anchor=center, pos=0.125, rotate=180}, draw=none, from=4-4, to=0]
\end{tikzcd}\cdpunct[8pt]{.}\]

\newsavebox\twodots
\sbox\twodots
{
	\begin{tikzcd}[ampersand replacement=\&,column sep=scriptsize, baseline=(current bounding box.center)]
		\bullet \& \bullet
	\end{tikzcd}
}

\newsavebox\anarrow
\sbox\anarrow
{
	\begin{tikzcd}[ampersand replacement=\&,column sep=scriptsize, baseline=(current bounding box.center)]
		\bullet \& \bullet
		\arrow[from=1-1, to=1-2]
	\end{tikzcd}
}

\newsavebox\freetwosimp
\sbox\freetwosimp
{
	\begin{tikzcd}[ampersand replacement=\&,column sep=scriptsize, baseline=(current bounding box.center)]
		\& \bullet \\
		\bullet \&\& \bullet
		\arrow["{g \circ f}", curve={height=-6pt}, from=2-1, to=2-3]
		\arrow["f", from=2-1, to=1-2]
		\arrow["g", from=1-2, to=2-3]
		\arrow["h"', curve={height=6pt}, from=2-1, to=2-3]
	\end{tikzcd}
}

\newsavebox\twosimp
\sbox\twosimp
{
	\begin{tikzcd}[ampersand replacement=\&,column sep=scriptsize, baseline=(current bounding box.center)]
		\& \bullet \\
		\bullet \&\& \bullet
		\arrow[""{name=0, anchor=center, inner sep=0}, "{g \circ f}", curve={height=-6pt}, from=2-1, to=2-3]
		\arrow["f", from=2-1, to=1-2]
		\arrow["g", from=1-2, to=2-3]
		\arrow[""{name=1, anchor=center, inner sep=0}, "h"', curve={height=6pt}, from=2-1, to=2-3]
		\arrow[shorten <=2pt, shorten >=2pt, Rightarrow, no head, from=0, to=1]
	\end{tikzcd}
}
Unfortunately, we haven't demonstrated the existence of all the desired colimits in the above diagram. Thus, to conclude the existence of $X \star []$, we must establish the existence of all the auxiliary colimits and the two pushouts in the above diagram.

We first focus on all the colimits other than the two pushout squares.

\begin{lemma}
	The following auxiliary colimits/weighted colimits exist:
	\begin{enumerate} [labelsep=.5em,itemsep=0.5em,topsep=0em, parsep=0.25em]
		\item The coproduct $\coprod \cC_{i}$ for an indexed family $\{\cC_i\}_{i \in I}$ with $I \in \Set$.
		\item The weighted colimit $\sk_0 X \star []$.
		\item The weighted colimits $\Delta^1 \star []$ and $\Delta^2 \star []$.
		\item The weighted colimit $\partial \Delta^2 \star []$.
	\end{enumerate}
\end{lemma}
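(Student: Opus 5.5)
We treat the four items separately and reduce each to an explicit construction in $\Cat$ whose universal property we check by hand, or to a result already established, namely \cref{cor:Deltannat} and cocontinuity of weighted colimits in the weight.

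For (1), we define $\coprod_i \cC_i$ directly. Its objects are $\coprod_i \obj(\cC_i)$ and its morphisms are $\coprod_i \mor(\cC_i)$, with source, target, identities and composition taken componentwise; composition is only defined inside a single component. A family of functors $F_i:\cC_i\to\cD$ determines a unique functor out of this category, because every object and every morphism lies in exactly one component. This gives $\Cat(\coprod_i\cC_i,\cD)\cong\prod_i\Cat(\cC_i,\cD)$, natural in $\cD$, which is exactly the coproduct property.

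Item (3) is a special case of \cref{cor:Deltannat}: $\Delta^1\star[]\cong[1]$ and $\Delta^2\star[]\cong[2]$.

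For (2), $\sk_0X$ is the $0$-skeletal simplicial set $\coprod_{X_0}\Delta^0$. Since weighted colimits are cocontinuous in the weight whenever the relevant colimits exist, we use (1) and (3) with $\Delta^0\star[]\cong[0]$ to get $\sk_0X\star[]\cong\coprod_{X_0}[0]$. This is the discrete category on $X_0$. One can also check directly that $\Cat(\text{disc}(X_0),\cD)\cong\Set(X_0,\obj\cD)\cong\sSet(\sk_0X,N\cD)$, where the last bijection is the adjunction $\sk_0\dashv\cosk_0$ combined with $N\cD_0=\obj\cD$. The same argument gives $\partial\Delta^1\star[]\cong[0]\sqcup[0]$, which the later pushouts will need.

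Item (4) is the main obstacle, because $\partial\Delta^2$ is a genuine pushout of copies of $\Delta^1$ glued along vertices, and we have not yet shown that $\Cat$ has such pushouts. We therefore write down a candidate and verify the universal property directly. The candidate is the free category $\cF$ on the graph with vertices $0,1,2$ and edges $f:0\to1$, $g:1\to2$, $h:0\to2$. Its nonidentity morphisms are $f$, $g$, $h$ and the composite $g\circ f$, with $h\neq g\circ f$; this is the picture \usebox\freetwosimp. The universal weighted cocone sends the three edges of $\partial\Delta^2$ to $f$, $g$ and $h$.

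To verify $\Cat(\cF,\cD)\cong\sSet(\partial\Delta^2,N\cD)$ naturally in $\cD$, we proceed in three steps. First, a map $\partial\Delta^2\to N\cD$ is the same as three compatible $1$-simplices of $N\cD$, since $\partial\Delta^2$ is the colimit of three copies of $\Delta^1$ glued along vertices and $\sSet$ is cocomplete with pointwise colimits. Second, such data is the same as three objects $x_0,x_1,x_2$ of $\cD$ and morphisms $x_0\to x_1$, $x_1\to x_2$, $x_0\to x_2$ with no relation imposed. Third, this data extends uniquely to a functor $\cF\to\cD$: $g\circ f$ must go to the composite, and since $\cF$ has no further nontrivial composites, functoriality holds automatically. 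The associativity checks here are routine because the longest composable chains in $\cF$ have length two.

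Naturality in $\cD$ is clear, because both sides act by postcomposition. It then follows that $\partial\Delta^2\star[]\cong\cF$.
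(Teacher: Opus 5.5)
Your proposal is correct and matches the paper's proof item by item: (3) from \cref{cor:Deltannat}, (2) via cocontinuity in the weight giving the discrete category on $X_0$, and (4) by exhibiting the free category on the three edges $f,g,h$ (with $h\neq g\circ f$) and checking its universal property directly. The only difference is in (1): you verify the coproduct's universal property by hand, whereas the paper deduces it from $\coprod_i N\cC_i$ having the spine extension property, so that $N$ preserves these coproducts.
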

\begin{proof}
	\begin{enumerate}[labelsep=.5em,itemsep=0.5em,topsep=0em, parsep=0.25em]
		\item []
		\item The coproduct $\coprod \cC_i$ is simply given as the disjoint union of the set of objects and of the set of morphisms. This is because $\coprod N\cC_{i}$ satisfies the spine extension property (see \cref{lemma:spine-N}). In particular, $N$ also preserves these coproducts.
		\item The colimit $\sk_0 X \star []$ is computed using the isomorphism $\Coprod\limits_{X_0} (\Delta^0 \star []) \xrightarrow{\cong} \sk_0 X \star []$. This is just the discrete category on the set $X_0$. Similarly, the colimit $\partial \Delta^1 \star []$ is a just a discrete category with $2$ objects.
		\item The colimits $\Delta^1 \star []$ and $\Delta^2 \star []$ are already shown to exist and be isomorphic to $[1]$ and $[2]$ respectively (refer \cref{cor:Deltannat}).
		\item The colimit $\partial \Delta^2 \star []$ is the only non trivial one. However, $\partial \Delta^2$ can be constructed by gluing three $\Delta^1$ along the endpoints. Thereafter, each of the three $\Delta^1 \star [] \cong [1]$ glues similarly along the objects. This yields the resultant colimit as the following diagram:
		\[\begin{tikzcd}
			&& \bullet \\
			\\
			\bullet &&&& \bullet
			\arrow["g", from=1-3, to=3-5]
			\arrow["f", from=3-1, to=1-3]
			\arrow["gf", curve={height=-12pt}, from=3-1, to=3-5]
			\arrow["h"', curve={height=12pt}, from=3-1, to=3-5]
		\end{tikzcd}\cdpunct[16pt]{.}\]
		Here, the arrows $f,g,h$ are some independent morphisms. One verifies this by directly establishing the desired universal property of the said diagram.
	\end{enumerate}
\end{proof}

\begin{remark}
	The maps between these computed auxiliary colimits are also the desired ones. This follows from the naturality in $n \in \bDelta$ of the isomorphisms $\Delta^n \star [] \cong [n]$.
\end{remark}

In particular, the aforementioned auxiliary colimits exist and provide the following new diagram:
\[\begin{tikzcd}
	{\Coprod\limits_{X^{\nd}_1}\usebox{\twodots}} && {\Coprod\limits_{X^{\nd}_1} \usebox{\anarrow}} \\
	\\
	\\
	{X_0} && {\sk_1 X \star []} & {\Coprod\limits_{X^{\nd}_2} \usebox{\freetwosimp}} \\
	\\
	&& {\sk_2 X \star []} & {\Coprod\limits_{X^{\nd}_2} \usebox{\twosimp}} \\
	\\
	&& {X \star []}
	\arrow[""{name=0, anchor=center, inner sep=0}, from=1-1, to=1-3]
	\arrow[from=1-1, to=4-1]
	\arrow[from=1-3, to=4-3]
	\arrow[from=4-1, to=4-3]
	\arrow[from=4-3, to=6-3]
	\arrow[from=4-4, to=4-3]
	\arrow[shift left=5, from=4-4, to=6-4]
	\arrow["\ulcorner"{anchor=center, pos=0.125, rotate=90}, draw=none, from=6-3, to=4-4]
	\arrow["\cong", from=6-3, to=8-3]
	\arrow[from=6-4, to=6-3]
	\arrow["\ulcorner"{anchor=center, pos=0.125, rotate=180}, draw=none, from=4-3, to=0]
\end{tikzcd}\cdpunct[8pt]{.}\]

We now move on to showing that the two pushouts exist. Intuitively, the first pushout indicates that $\sk_1 X \star []$ is a free category formed by adding to $X_0$ all the non-identity arrows that exists in $X_1^{\nd}$ and freely composing them. Similarly, the second pushout indicates that $\sk_2 X \star []$ is the category derived from the free category $\sk_1 \star []$ by quotienting the relations induced by the non-degenerate $2$-simplices of $X_2^{\nd}$. In particular, the new hom-sets are formed after quotienting by the equivalence relation generated by $f\circ g \sim h$ for each $2$-cell in $X_2^{\nd}$ together with the relation $afb \sim agb$ for any $a,b$ whenever $f \sim g$.

\begin{lemma}\label{lemma:FXcons}
	The pushout of the following diagram exist:
	\[\begin{tikzcd}
		{X_0} && {\Coprod\limits_{X^{\nd}_1}\usebox{\twodots}} \\
		\\
		&& {\Coprod\limits_{X^{\nd}_1} \usebox{\anarrow}}
		\arrow[from=1-3, to=1-1]
		\arrow[shift left=5, from=1-3, to=3-3]
	\end{tikzcd}\cdpunct[26pt]{.}\]
\end{lemma}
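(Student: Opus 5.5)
We construct the pushout explicitly as the free category on the graph whose vertices are $X_0$ and whose edges are the nondegenerate $1$-simplices $X_1^{\nd}$, each $x \in X_1^{\nd}$ running from $d_1 x$ to $d_0 x$. Call this category $\cF X$. Its objects are the elements of $X_0$. A morphism $a \to b$ is a finite composable string $(x_k,\dots,x_1)$ of elements of $X_1^{\nd}$ with $d_1 x_1 = a$, $d_0 x_{i} = d_1 x_{i+1}$, and $d_0 x_k = b$. The empty string at $a$ is the identity $1_a$, and composition is concatenation of strings. Associativity and unitality are immediate because concatenation of finite lists is associative with the empty list as unit.

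Next we write down the cocone. The functor $X_0 \to \cF X$ is the identity on objects. The functor $\coprod_{X_1^{\nd}}[1] \to \cF X$ sends the copy of $[1]$ indexed by $x$ to the one-letter string $(x)$ from $d_1 x$ to $d_0 x$. The two composites out of $\coprod_{X_1^{\nd}}(\bullet\ \bullet)$ agree, because the endpoint map into $X_0$ is $(d_1,d_0)$ on each summand. The coproducts used here exist by the preceding lemma, so the pushout span is a genuine diagram in $\Cat$ and a cocone is simply a pair of functors agreeing on $\coprod_{X_1^{\nd}}(\bullet\ \bullet)$.

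For the universal property, take a category $\cD$ with functors $P : X_0 \to \cD$ (a function on objects) and $Q : \coprod_{X_1^{\nd}}[1] \to \cD$ (a morphism $Q(x) : P(d_1x) \to P(d_0x)$ for each $x$), compatible on the endpoints. Define $\Phi : \cF X \to \cD$ to be $P$ on objects and to send a string $(x_k,\dots,x_1)$ to the composite $Q(x_k)\circ\cdots\circ Q(x_1)$, with the empty string going to an identity. This is well typed precisely because of the compatibility condition. It is a functor because concatenation goes to composition, which uses associativity in $\cD$. It restricts to $P$ and $Q$ along the cocone legs. It is unique because every morphism of $\cF X$ is a composite of identities and one-letter strings, and any functor restricting correctly must send these to $1_{P(a)}$ and $Q(x)$ respectively.

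The only point needing care, rather than a real obstacle, is checking that the proposed cocone is the right one. The maps in the span must be the ones induced by the weighted colimits $\partial\Delta^1 \star [] \to \Delta^1\star []$ and $\coprod\partial\Delta^1\star[] \to \sk_0X\star[]$. By the remark after the auxiliary lemma, naturality of $\Delta^n\star[]\cong[n]$ identifies these with the endpoint inclusions $\{0,1\}\hookrightarrow[1]$ and with the face maps $d_1,d_0$. Degenerate $1$-simplices are excluded from the generators, since in $\sk_1X$ they are already accounted for by the identities. Beyond this, the proof is a straightforward free-category construction and involves no circularity, since it uses only set-level operations.
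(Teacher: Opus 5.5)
Your proposal is correct and follows essentially the same route as the paper: the paper also builds the free category $FX$ on objects $X_0$, with strings of nondegenerate $1$-simplices under concatenation as morphisms, takes the evident cocone, and gets the unique comparison functor by extending the assignment on generators functorially. Your version just makes explicit details the paper leaves implicit, namely the source/target convention $(d_1,d_0)$, the commutativity of the cocone, and the uniqueness argument.
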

\begin{proof}
	We first define a category called $FX$, where $F$ is read as free. On the objects, we define it by $\obj(FX) = X_0$. The hom set $FX(x_1,x_2)$ for any $x_1,x_2 \in X_0$ is defined as the collection of all finite unidirectional sequences of $1$-simplices of the form $x_1 \rightarrow \cdots \rightarrow x_2$, with all $1$-simplices belonging to $X_1^{\nd}$. If $x_1=x_2$, we also allow the empty sequence to exist and act as the identity morphism. Composition maps are now defined using concatenation of sequences which are associative and unital. Thus, $FX$ is a well defined category.
	
	We now show that $FX$ satisfies the universal property of the pushout. First, we define the projection maps: The map $\coprod\limits_{X^{\nd}_1}\usebox{\anarrow} \rightarrow FX$ is defined by picking out every \emph{generating} arrow in the category $FX$. That is, we pick all arrows in $FX$ that correspond to a sequence of the form $x_1 \rightarrow x_2$. These are the \emph{generators} as every other arrow is a unique finite composite of the arrows corresponding to $X_1^{\nd}$ by definition of $FX$. And secondly, the map $X_0 \rightarrow FX$ is simply a bijection of objects.
	
	Now, for all $\cD \in \Cat$ that makes the square commute, the map $X_0 \rightarrow D$ will uniquely define a function $\obj(FX) \rightarrow \obj(\cD)$. And on the morphisms, the map $\coprod\limits_{X^{\nd}_1}\usebox{\anarrow} \rightarrow \cD$ uniquely define a \emph{partial function} $\mor(FX) \rightarrow \mor(\cD)$. However, we can further uniquely extend this partial function to an actual function by forcing functorialty in $\mor(FX)$. This whole data provides us the desired unique functor $FX \rightarrow \cD$ that commutes with the projection maps onto $FX$.
	
	In particular, the existence of this pushout implies that the colimit $\sk_1 X \star []$ exists and that the canonical comparison map $FX \rightarrow \sk_1 X \star []$ is an isomorphism. 
\end{proof}

\begin{lemma}\label{lemma:hXcons}
	The pushout of the following diagram exists:
	\[\begin{tikzcd}
		FX && {\Coprod\limits_{X^{\nd}_2} \usebox\freetwosimp} \\
		\\
		&& {\Coprod\limits_{X^{\nd}_2} \usebox\twosimp}
		\arrow[from=1-3, to=1-1]
		\arrow[shift left=5, from=1-3, to=3-3]
	\end{tikzcd}\cdpunct[20pt]{.}\]
\end{lemma}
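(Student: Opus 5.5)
We construct the pushout explicitly as a quotient of $FX$ by a congruence on its hom-sets, in the same style as \cref{lemma:FXcons}. Each $\sigma \in X_2^{\nd}$ has faces $d_2\sigma = f$, $d_0\sigma = g$, $d_1\sigma = h$. In the glued-in copy of $\partial\Delta^2 \star []$, the three edges are sent to the morphisms of $FX$ given by $f$, $g$, $h$. A degenerate face is sent to the empty sequence, i.e.\ an identity. The composite $g\circ f$ becomes the concatenated sequence. So each $\sigma$ determines a pair of parallel morphisms $(g\circ f, h)$ in $FX(x_0,x_2)$.

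We let $\sim$ be the smallest family of equivalence relations $\sim_{x,y}$ on the hom-sets $FX(x,y)$ that contains all pairs $(g\circ f, h)$ and is closed under whiskering: $u\sim v$ implies $a u b \sim a v b$ for all composable $a,b$. This is the intersection of all such families, and it is nonempty since the family of total relations qualifies. Closure under whiskering together with transitivity makes $\sim$ compatible with composition: from $u\sim u'$ and $v\sim v'$ we get $vu \sim v'u \sim v'u'$. We then define $hX$ with $\obj(hX)=X_0$ and $hX(x,y)=FX(x,y)/\!\sim$, with composition induced from concatenation. This is well defined and associative and unital because $FX$ is, and there is a quotient functor $q:FX\to hX$ that is the identity on objects.

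Next we give the projection $\coprod_{X_2^{\nd}} [2] \to hX$. On each summand $[2]$ we send the objects to $d_2d_2\sigma$, $d_0d_2\sigma$, $d_0d_0\sigma$ and the morphisms $0\to1$, $1\to2$, $0\to 2$ to the classes $[f]$, $[g]$, $[h]$. This is a functor exactly because $[g][f]=[g\circ f]=[h]$ in $hX$. The square then commutes, since both composites $\coprod \partial\Delta^2\star[] \to hX$ agree on the generating arrows $f,g,h$. Here we use that $\partial\Delta^2\star[]$ is freely generated by $f,g,h$, as in the earlier lemma, so two functors out of it agree once they agree on these arrows.

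For the universal property, take a cocone consisting of $P:FX\to\cD$ and $Q:\coprod[2]\to\cD$ that agree on $\coprod \partial\Delta^2\star[]$. Agreement of $P$ and $Q$ on the image of $[2]$ forces $P(g)\circ P(f)=Q(1\to 2)\circ Q(0\to1)=Q(0\to2)=P(h)$, so $P(g\circ f)=P(h)$. The relation $\{(u,v): P(u)=P(v)\}$ is an equivalence relation, closed under whiskering by functoriality of $P$, and contains all generating pairs. By minimality it therefore contains $\sim$, so $P$ factors uniquely through $q$ as some $\bar P : hX\to\cD$, uniqueness holding because $q$ is surjective on objects and morphisms. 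Then $\bar P$ is compatible with $Q$, since $\bar P$ agrees with $Q$ on the generators $[f],[g],[h]$ of each summand $[2]$, and a functor out of $[2]$ is determined by its values on the generators $0\to 1$ and $1\to 2$ by \cref{cor:Deltannat}. Uniqueness of the induced map follows from surjectivity of $q$.

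The main obstacle is bookkeeping for degenerate faces of nondegenerate $2$-simplices, for instance when $f$ is degenerate, so that $f$ is the empty sequence and the relation reads $g\sim h$. We handle this uniformly by sending degenerate edges to identities both in $FX$ and in $\partial\Delta^2\star[]$. This is consistent because $\sk_1X\star[]\cong FX$ sends degenerate $1$-simplices to identities, as in \cref{lemma:FXcons}. As a consequence, $\sk_2X\star[]$ exists and is isomorphic to $hX$.
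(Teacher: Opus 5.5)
Your proposal is correct and follows essentially the same route as the paper. You quotient each hom-set of $FX$ by the smallest whiskering-closed equivalence relation containing $g\circ f\sim h$ for every $\sigma\in X_2^{\nd}$; the paper describes the same congruence through chains that differ by the boundary of a $2$-simplex and finite zig-zags. You then show that any cocone leg $FX\to\mathcal{D}$ identifies $g\circ f$ with $h$, so it respects the congruence and descends uniquely to $hX$.

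One small slip: vertex $0$ of $\sigma$ is $d_1d_2\sigma$, not $d_2d_2\sigma$, since there is no face map $d_2$ on $1$-simplices.
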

	
\begin{proof}
	We first define a category called $hX$, where $h$ is read as homotopy. We set $\obj(hX)=X_0$ and the hom sets $hX(x_1,x_2)$ for all $x_1,x_2 \in X_0$ are derived from $FX(x_1,x_2)$ after quotienting each of these hom set with a respective equivalence relation $\sim_{x_1,x_2}$. This equivalence relation is essentially generated by imposing relations corresponding to the non-degenerate $2$-simplices of $X$ that identifies two composable chains. We now provide a more concrete description of this equivalence relation using the notion of chains on $FX$.
	
	We define a \textit{chain} $\sigma$ from $x_1$ to $x_2$ as a map $ \sigma : [n] \rightarrow FX$ such that $\{0\} \mapsto x_1$ and $\{n\}\mapsto x_2$. Note that the image of a chain can contain identity morphisms of $FX$. Let $\Chain(x_1,x_2)$ denote the set of all possible chains from $x_1$ to $x_2$. There is a surjective map $\pi : \Chain(x_1,x_2) \rightarrow FX(x_1,x_2)$ provided by the restriction along the inclusion $\{0 \mapsto n\} \hookrightarrow [n]$. For the surjectivity of this map, note that it has a canonical section. It is defined by sending a map $f \in FX(x_1,x_2)$ to the unique non-degenerate chain $\sigma_{f,\nd}$ from $x_1$ to $x_2$ such that all of the non-trivial morphism of $[n]$ maps to the unique decomposition of $f \in FX$ through the generators. 
	
	We define a relation between two maps $f_1,f_2 \in FX(x_1,x_2)$ by saying $f_1 \sim f_2$ iff they have some representing chain $\sigma_{f_1}$ and $\sigma_{f_2}$ whose images in $FX$ only differ by the boundary of a $2$-simplex in $X_2^{\nd}$. For example, the following diagram represents two such chains:
	\[\begin{tikzcd}
		&&&&&& \bullet &&& \\
		{x_1} & {x_2} & {=} & {x_1} & \bullet & \bullet && \bullet & \bullet & {x_2}
		\arrow[from=1-7, to=2-8]
		\arrow["{f_1}", shift left=2, from=2-1, to=2-2]
		\arrow["{f_2}"', shift right=2, from=2-1, to=2-2]
		\arrow[from=2-4, to=2-5]
		\arrow[from=2-5, to=2-6]
		\arrow[from=2-6, to=1-7]
		\arrow[from=2-6, to=2-8]
		\arrow[from=2-8, to=2-9]
		\arrow[from=2-9, to=2-10]
	\end{tikzcd}\cdpunct[8pt]{.}\]

	Then, the equivalence relation $\sim_{x_1,x_2}$ on $FX(x_1,x_2)$ is defined as the minimally generated one upon the aforementioned relation. Now we have to show that the composition remain well defined under the quotienting of homsets. In particular, we wish to show that the concatenation operation on $FX$ respects this relation. However, this can be demonstrated pictorially for some $f_1 \sim f_1'$ and $f_2 \sim f_2'$ as follows:
	\[\begin{tikzcd}
		\bullet && \bullet && \bullet \\
		\bullet && \bullet && \bullet
		\arrow["{f_1'}", from=1-1, to=1-3]
		\arrow[""{name=0, anchor=center, inner sep=0}, "{f_2}"', curve={height=12pt}, from=1-3, to=1-5]
		\arrow[""{name=1, anchor=center, inner sep=0}, "{f_2'}", curve={height=-12pt}, from=1-3, to=1-5]
		\arrow[equals, from=2-1, to=1-1]
		\arrow[""{name=2, anchor=center, inner sep=0}, "{f_1}"', curve={height=12pt}, from=2-1, to=2-3]
		\arrow[""{name=3, anchor=center, inner sep=0}, "{f_1'}", curve={height=-12pt}, from=2-1, to=2-3]
		\arrow[equals, from=2-3, to=1-3]
		\arrow["{f_2}"', from=2-3, to=2-5]
		\arrow[equals, from=2-5, to=1-5]
		\arrow["\sim", shorten <=3pt, shorten >=3pt, Rightarrow, from=0, to=1]
		\arrow["\sim", shorten <=3pt, shorten >=3pt, Rightarrow, from=2, to=3]
	\end{tikzcd}\cdpunct[20pt]{.}\] 
	In particular, if $f_1 \sim f_1'$ then $f_2 \circ f_1 \sim f_2 \circ f_1'$. The same holds in the $f_2$ arguments after fixing $f_1$. The associativity and unitality of composition in $hX$ is now a direct consequence of the same for $FX$. This proves that $hX$ is a category.

	We now move on to the universal property of $hX$. The functor $FX \rightarrow hX$ is defined on objects as identity and on morphisms through the quotient maps of the equivalence relations $\sim_{x_1,x_2}$. In particular, this implies that each arrow $[f] \in hX(x_1,x_2)$ can be represented using the unique non-degenerate chain $\sigma_{f,\nd}$ from $x_1$ to $x_2$ and that $hX$ is generated using the same \enquote{generating} arrows as $FX$. 
	
	For the map \[\Coprod\limits_{X^{\nd}_2} \usebox\twosimp \rightarrow hX,\] we note that every functor is uniquely specified by its action on the arrows. Consequentially, the map $FX \rightarrow hX$ already defines the action for each $2$-simplex in $X_2^{\nd}$ by simply sending $f,g,h$ to $[f],[g],[h]$. Clearly, $gf \sim h$ and thus the functor is well defined for each $2$-simplex in 
	$X_2^{\nd}$.
	
	Finally, consider any $\cD \in \Cat$ that makes the square commute as follows:
	\[\begin{tikzcd}
		{\Coprod\limits_{X^{\nd}_2} \usebox\freetwosimp} &&& FX \\
		\\
		\\
		{\Coprod\limits_{X^{\nd}_2} \usebox\twosimp} &&& \cD
		\arrow[from=1-1, to=1-4]
		\arrow[shift left=5, from=1-1, to=4-1]
		\arrow["\phi", from=1-4, to=4-4]
		\arrow[from=4-1, to=4-4]
	\end{tikzcd}\cdpunct[32pt]{.}\]
	The map $\phi : FX \rightarrow \cD$ already provides all the desired data on how the functor $\tilde \phi : hX \rightarrow \cD$ ought to be uniquely defined on both the objects and the arrows. However, we do have to check whether it respects the family of equivalence relations for it to be well defined.
	
	The commuting diagram ensures that for all $f,g,h \in FX$ that are the boundary of a non-degenerate $2$-simplex, we have the relation $\phi(h)=\phi(g)\circ\phi(f)$. Note that the images of $f,g$, and $h$ in $FX$ are either the generating arrows of $FX$ or some identity map in $FX$. Consequentially, if there exists two arbitrary $p, q \in FX(x_1,x_2)$ such that $p \sim q$ by the generating relation then $\phi(p)=\phi(q)$ because $p$ and $q$ differ only by a $2$-simplex. Furthermore, if $p \sim q$ by the generated equivalence relation then there is a finite zig-zag of the generating relations between them. In turn, this also implies $\phi(p)=\phi(q)$. This concludes that the functor $\phi$ respects the family of equivalence relations $\sim_{x_1,x_2}$. 
	
	Hence, $\phi : FX \rightarrow \cD$ uniquely descends to a functor $\tilde \phi : hX \rightarrow \cD$ and we are done.
\end{proof}
	
The existence of this pushout implies that the colimit $\sk_2 X \star []$ exists and satisfies $\sk_2 \star [] \cong hX$. Furthermore, the construction $hX$ can be promoted to a functor in $X$ as the $2$-skeletal filtration is functorial in $X$. Hence, we obtain the following main results.

\begin{theorem}
	The weighted colimit $X \star []$ exist and there is a natural isomorphism $X \star [] \cong hX$.
\end{theorem}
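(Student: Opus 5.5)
The plan is to assemble the theorem from the pieces already established: the cocontinuity of $-\star[]$ in the weight, the $2$-skeletal filtration of $X$, the auxiliary colimits, \cref{lemma:FXcons} and \cref{lemma:hXcons}, and the lemma identifying $\sk_2X\star[]$ with $X\star[]$.

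The argument proceeds in three steps.

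\textbf{Step 1.} Exhibit $hX$ as the weighted colimit $\sk_2X\star[]$ by establishing, for every $\cD\in\Cat$, a bijection $\Cat(hX,\cD)\cong\sSet(\sk_2X,N\cD)$ natural in $\cD$. Since $\sSet(-,N\cD)$ sends colimits of weights to limits, the $2$-skeletal filtration gives $\sSet(\sk_2X,N\cD)$ as an iterated pullback:
\begin{itemize}
\item first $\sSet(\sk_1X,N\cD)$, as the pullback of $\sSet(\sk_0X,N\cD)$ and $\prod_{X_1^{\nd}}\sSet(\Delta^1,N\cD)$ over $\prod_{X_1^{\nd}}\sSet(\partial\Delta^1,N\cD)$;
\item then $\sSet(\sk_2X,N\cD)$, as the analogous pullback with $\Delta^2$ and $\partial\Delta^2$.
\end{itemize}
By \cref{cor:Deltannat} and the auxiliary lemma, each corner $\sSet(K,N\cD)$ with $K\in\{\sk_0X,\Delta^1,\partial\Delta^1,\Delta^2,\partial\Delta^2\}$ is naturally isomorphic to $\Cat(K\star[],\cD)$, and the remark guarantees that the connecting maps agree. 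The universal properties proved in \cref{lemma:FXcons} and \cref{lemma:hXcons} say that $\Cat(FX,\cD)$ and $\Cat(hX,\cD)$ are exactly these pullbacks. Hence $\Cat(hX,\cD)\cong\sSet(\sk_2X,N\cD)$, that is, $hX\cong\sk_2X\star[]$.

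\textbf{Step 2.} By the lemma on the counit $\epsilon^{\sk_2}_X$, which relies on $N\cD$ being $2$-coskeletal, $X\star[]$ exists and the induced map $hX\cong\sk_2X\star[]\to X\star[]$ is an isomorphism.

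\textbf{Step 3.} Establish naturality in $X$. Once $X\star[]$ exists for all $X$, the weighted colimits assemble by the Yoneda lemma into a functor $\sSet\to\Cat$. I would define $hf$ for a simplicial map $f:X\to Y$ as the unique functor corresponding under the bijection of Step 1 to precomposition with $f$. It is the functor induced on $X_0$ and on generators by $f_0$ and $f_1$. A non-degenerate edge sent to a degenerate one goes to an identity, and a non-degenerate $2$-simplex sent to a degenerate one imposes a relation that already holds. By construction the bijection is then natural in $X$, so $X\star[]\cong hX$ naturally.

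The main obstacle is exactly this naturality. The filtration is built from non-degenerate simplices, so it is not strictly functorial: a map $f$ may send non-degenerate simplices to degenerate ones. Defining $hf$ directly through the iterated-pullback description would therefore require case analysis. To avoid that, I would first try to define $hf$ via the universal property, as the map representing $\sSet(f,N\cD)$, and then only check that it acts on objects and generators by $f_0,f_1$. That check suffices because a functor out of $hX$ is determined by its values on the generators.
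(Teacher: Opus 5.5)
Your proposal is correct and takes essentially the paper's approach. It combines the $2$-skeletal filtration with cocontinuity of $-\star[]$ in the weight, which you make explicit by letting $\sSet(-,N\cD)$ turn the filtration pushouts into pullbacks matched against the pushout lemmas for $FX$ and $hX$. You then apply the $\sk_2$-lemma, which rests on $N\cD$ being $2$-coskeletal. Your treatment of naturality, defining $hf$ through the universal property and then checking it acts by $f_0,f_1$ on objects and generators, is slightly more careful than the paper's one-line appeal to functoriality of the filtration, since the presentation by non-degenerate cells is not itself strictly functorial.
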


\begin{corollary}
	The left adjoint $h$ to the nerve functor $N : \Cat \rightarrow \sSet$ exist and is defined by pushout constructions of \cref{lemma:FXcons} and \cref{lemma:hXcons}. 
\end{corollary}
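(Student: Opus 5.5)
The statement has two parts: existence of $X \star []$, and a natural isomorphism $X \star [] \cong hX$.

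\emph{Existence.} I would first reduce to the $2$-skeleton. By the lemma on $\sk_2$, the counit $\epsilon^{\sk_2}_X : \sk_2 X \to X$ induces an isomorphism $\sk_2 X \star [] \to X \star []$, and either side exists iff the other does. So it is enough to produce $\sk_2 X \star []$. For this I would use the $2$-skeletal filtration of $X$ by non-degenerate simplices. It writes $\sk_1 X$ as a pushout of $\Coprod_{X_1^{\nd}} \Delta^1$ along $\Coprod_{X_1^{\nd}}\partial\Delta^1 \to \sk_0 X$, and $\sk_2 X$ as a pushout of $\Coprod_{X_2^{\nd}}\Delta^2$ along $\Coprod_{X_2^{\nd}}\partial\Delta^2 \to \sk_1 X$.

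The key observation is that cocontinuity of $-\star[]$ in the weight should be read in the following form. If $W = \Colim_i W_i$ in $\sSet$, each $W_i \star []$ exists, and the colimit $\Colim_i (W_i\star[])$ exists in $\Cat$, then that colimit satisfies the defining universal property of $W \star []$. This holds because
\[\Cat(\Colim_i (W_i\star[]),\cD)\cong \Lim_i \sSet(W_i,N\cD)\cong \sSet(W,N\cD),\]
naturally in $\cD$. I would apply this twice. The auxiliary colimits lemma supplies $\sk_0X\star[]$, $\Delta^1\star[]$, $\Delta^2\star[]$, $\partial\Delta^1\star[]$, $\partial\Delta^2\star[]$ and coproducts. \cref{lemma:FXcons} supplies the first pushout, so $\sk_1 X\star[] \cong FX$. \cref{lemma:hXcons} supplies the second pushout, so $\sk_2X\star[]\cong hX$. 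Composing with the skeleton isomorphism gives $X\star[]\cong hX$ for each $X$.

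\emph{Naturality.} This is the step I expect to be the main obstacle. The filtration is indexed by $X_1^{\nd}$ and $X_2^{\nd}$, and a simplicial map need not send non-degenerate simplices to non-degenerate ones, so the pushout diagrams are not literally functorial in $X$. I would avoid this by never making the filtration itself functorial. Once every $X\star[]$ exists, the Yoneda lemma assembles the weighted colimits into a functor $-\star[] : \sSet \to \Cat$, with $f: X\to Y$ sent to the map corresponding to $\sSet(Y,N-)\to\sSet(X,N-)$. It then remains to make $hX$ functorial and check compatibility with the comparison isomorphisms.

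For $f : X\to Y$, define $hf$ on objects by $f_0$. On a generating arrow $x \in X_1^{\nd}$, send it to the class of $f_1(x)$, interpreted as an identity when $f_1(x)$ is degenerate. A non-degenerate $2$-simplex maps to a $2$-simplex of $Y$ whose boundary relation holds in $hY$; in the degenerate case it holds trivially because the relation reduces to $g\circ 1 = g$ or similar. Hence $hf$ is a well-defined functor.

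Naturality of $X\star[]\cong hX$ then reduces to one check. Both composites $X\star[] \to hY$ must correspond to the same element of $\sSet(X,N(hY))$. Since $N(hY)$ satisfies $\iep$, \cref{lemma:spinemaps} says that element is determined by its action on $1$-simplices. Finally, I would check on $1$-simplices of $X$ that both routes send an edge $x$ to $[f_1(x)]$.
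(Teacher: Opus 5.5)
Your proposal is correct. Its existence half follows the paper's own argument: reduce to $\sk_2 X$, push the $2$-skeletal filtration through cocontinuity of $-\star[]$ in the weight, supply the auxiliary colimits, and finish with \cref{lemma:FXcons} and \cref{lemma:hXcons}. Your form of cocontinuity is also the one actually needed: if the $W_i\star[]$ and their colimit in $\Cat$ exist, that colimit is $W\star[]$. The paper's cocontinuity lemma presupposes that the weighted colimits already exist.

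The real difference is naturality. The paper handles it in one sentence: $hX$ is functorial "as the $2$-skeletal filtration is functorial in $X$." As you correctly observe, the pushout presentations indexed by $X_1^{\nd}$ and $X_2^{\nd}$ are not functorial in $X$. So that sentence glosses over exactly the point needed to make $X\star[]\cong hX$ natural. Your route fills this in at the cost of a short computation:
\begin{enumerate}
\item get functoriality of $-\star[]$ from Yoneda;
\item define $hf$ explicitly, where the degenerate cases $f_2(\sigma)=s_0e$ or $s_1e$ only produce the trivial relations $e\circ 1=e$ and $1\circ e=e$;
\item compare the two composites in $\sSet(X,N(hY))$ and reduce to $1$-simplices by \cref{lemma:spinemaps}, using that nerves satisfy $\iep$ (\cref{lemma:spine-N}).
\end{enumerate}

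Two things should be made explicit. First, the $1$-simplex check needs that the universal cocone of $X\star[]\cong hX$ sends each $x\in X_1$ to $[x]$, read as an identity when $x$ is degenerate. You can read this off from the pushout legs for nondegenerate edges, and from the fact that a degenerate edge $\Delta^1\to X$ factors through $\Delta^0$. Second, the statement is about $h\dashv N$, not only about $X\star[]\cong hX$. So close by noting that $\Cat(X\star[],\cD)\cong\sSet(X,N\cD)$ is natural in both variables once $-\star[]$ is assembled. This makes $-\star[]$ left adjoint to $N$ (the corollary in \cref{sec:handN}), so $h\cong -\star[]$ is computed by the two pushouts.
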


Finally, we invoke again the standard results about reflective embedding (see \cite[Prop.~4.5.15]{reihlcat}) to provide the following explicit formulas for computing limits and colimits using the $h \dashv N$ adjunction. 

\begin{corollary}
	The category $\Cat$ is complete and cocomplete. Moreover, for each small diagram $F :\cJ \rightarrow \Cat$, we have isomorphisms $\Lim F \cong h (\Lim N \circ F)$ and $\Colim F \cong h (\Colim N \circ F)$.
\end{corollary}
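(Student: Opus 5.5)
The statement to prove is the final corollary: $\Cat$ is complete and cocomplete, with $\Lim F \cong h(\Lim N\circ F)$ and $\Colim F \cong h(\Colim N\circ F)$.

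First I assemble the left adjoint. By the preceding theorem, every weighted colimit $X\star[]$ exists and $X\star[]\cong hX$ naturally in $X$. So \cref{prop:wlim} (equivalently \cref{cor:sk2catcom}) applies: $h\dashv N$ exists, with $\sSet(X,N\cD)\cong\Cat(hX,\cD)$. Since $N$ is fully faithful, the counit $\epsilon\colon hN\Rightarrow 1$ is a natural isomorphism. This is the corollary $N\cC\star[]\cong\cC$, i.e.\ $hN\cC\cong\cC$. So $N$ exhibits $\Cat$ as a reflective subcategory of the complete and cocomplete category $\sSet$.

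For colimits, fix a small diagram $F\colon\cJ\to\Cat$ and form $L=\Colim N\circ F$ pointwise in $\sSet$. For any $\cD$ I chain natural isomorphisms:
\[\Cat(hL,\cD)\cong\sSet(L,N\cD)\cong\Lim_j\sSet(NF_j,N\cD)\cong\Lim_j\Cat(F_j,\cD).\]
The last step is full faithfulness of $N$. Hence $hL$ with the cocone $F_j\cong hNF_j\to hL$ is a colimit of $F$. The only point to check is that the cocone legs are the images under $h$ of the colimit injections, precomposed with $\epsilon^{-1}$. This follows by evaluating the chain at $1_{hL}$.

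For limits, form $M=\Lim N\circ F$ in $\sSet$. The main step is to show $M$ lies in the essential image of $N$. I use \cref{lemma:spine-N}: each $NF_j$ satisfies $\iep$, and limits in $\sSet$ commute with the representable functors $\sSet(\Delta^n,-)$ and $\sSet(I_n,-)$. Therefore the restriction map $\sSet(\Delta^n,M)\to\sSet(I_n,M)$ is a limit of bijections, hence a bijection, so $M$ satisfies $\iep$. By \cref{lemma:spine-N}, $M\cong N\cC$ for some $\cC$, and then $hM\cong hN\cC\cong\cC$, so $M\cong NhM$. Now
\[\Cat(\cD,hM)\cong\sSet(N\cD,NhM)\cong\sSet(N\cD,M)\cong\Lim_j\sSet(N\cD,NF_j)\cong\Lim_j\Cat(\cD,F_j),\]
so $hM$ is $\Lim F$.

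I expect the main obstacle to be bookkeeping rather than substance. The real content is that the unit at $M$ is an isomorphism, and I get this from the spine extension characterization instead of assuming closure of reflective subcategories under limits. Alternatively, I could cite \cite[Prop.~4.5.15]{reihlcat} directly once $h\dashv N$ and full faithfulness are established.
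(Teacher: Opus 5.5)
Your proof is correct. The paper's proof consists only of citing \cite[Prop.~4.5.15]{reihlcat} once $h\dashv N$ and full faithfulness of $N$ are in hand. Your colimit half is exactly the standard argument behind that citation, so the genuine difference is in the limit half.

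The general reflective-subcategory argument shows abstractly that the unit $\eta_M\colon M\to NhM$ is invertible at $M=\Lim N\circ F$. The limit cone factors through $\eta_M$, which gives $r\colon NhM\to M$ with $r\eta_M=1$. Then $\eta_M r=1$, because maps out of $NhM$ into nerves are determined by precomposition with $\eta_M$.

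You instead use the intrinsic description of the essential image of $N$ from \cref{lemma:spine-N}. There $\iep$ is a unique-extension (orthogonality) condition against the inclusions $i_n$, so it is visibly stable under limits computed in $\sSet$.

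Your route buys two things:
\begin{itemize}
\item Concreteness: limits of nerves, computed levelwise, are nerves.
\item Independence from the main theorem: the limit half never needs $h$ to exist on all of $\sSet$. Indeed $\Cat(\cD,\cC)\cong\sSet(N\cD,N\cC)\cong\sSet(N\cD,M)$ already exhibits $\cC$ as $\Lim F$, and $M\star[]\cong\cC$ follows from the earlier corollary on nerves.
\end{itemize}
The cost is that you rely on the second half of \cref{lemma:spine-N}. Its simplicial comparison map $X\to N\cC$ is the most laborious verification in Section~3, whereas the cited route needs only the adjunction and full faithfulness.

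A minor citation point: \cref{prop:wlim} as stated already asserts completeness and cocompleteness, so invoking it makes the rest of your argument redundant. What you actually need at that step is only the existence of $h\dashv N$. That comes from the nerve--realization corollary identifying a left adjoint of $N$ with the pointwise Kan extension $X\mapsto X\star[]$.
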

	\section{Related results}

\subsection{Composite adjunctions}\label{sec:deradj}

There are two other important adjunctions that can be derived directly from the $h \dashv N$ adjunction by composing it with the $\Sk_n \dashv (-)_{\geq n}$ adjunction for some $n\geq 0$. The first case at $n=0$ is the usual underlying set adjunction between $\Cat$ and $\Set$ using the $\obj$ functor.

\begin{lemma}
	There exists an adjunction
	\[\begin{tikzcd}
		\Set && \Cat
		\arrow[""{name=0, anchor=center, inner sep=0}, "{h(\Sk_0-)}", curve={height=-12pt}, from=1-1, to=1-3]
		\arrow[""{name=1, anchor=center, inner sep=0}, "{(N-)_{\leq 0}}", curve={height=-12pt}, from=1-3, to=1-1]
		\arrow["\dashv"{anchor=center, rotate=-90}, draw=none, from=0, to=1]
	\end{tikzcd}\cdpunct[0pt]{,}\]
	where the functor $(N-)_{\leq 0}$ is isomorphic to $\obj(-)$ and the left adjoint is isomorphic to the discrete category functor.
\end{lemma}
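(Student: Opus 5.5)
We prove the adjunction by composing two adjunctions already available, $\Sk_0 \dashv (-)_{\leq 0}$ between $\Set = \sSet^{\leq 0}$ and $\sSet$, and $h \dashv N$ between $\sSet$ and $\Cat$ (the corollary of \cref{sec:ladj}). Since adjunctions compose, $h \circ \Sk_0 \dashv (-)_{\leq 0} \circ N$. This gives, for $S \in \Set$ and $\cC \in \Cat$, the chain of bijections natural in both variables:
\[\Cat(h(\Sk_0 S),\cC) \cong \sSet(\Sk_0 S, N\cC) \cong \Set(S, (N\cC)_{\leq 0}).\]
Here we note that $\sSet^{\leq 0}$ is simply $\Set$, because $\bDelta_{\leq 0}$ is the terminal category.

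Next we identify the right adjoint. By the formula $N(\cC)_n \cong \Cat([n],\cC)$, we get $(N\cC)_{\leq 0} = (N\cC)_0 \cong \Cat([0],\cC) \cong \obj(\cC)$. Naturality in $\cC$ holds because functoriality of $N$ is given by postcomposition, and postcomposing a functor $[0] \to \cC$ with $F$ applies $F$ to the chosen object.

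Then we identify the left adjoint. The simplicial set $\Sk_0 S$ has $0$-simplices $S$ and only degenerate higher simplices, so it is $0$-skeletal and isomorphic to $\coprod_S \Delta^0$. Its nondegenerate sets $X_1^{\nd}$ and $X_2^{\nd}$ are empty. There are two ways to finish. One is to use the explicit construction: with no nondegenerate $1$- or $2$-simplices, $FX$ and $hX$ have object set $S$ and only empty sequences as morphisms, i.e.\ the discrete category on $S$. The other is to use cocontinuity in weights: $h(\Sk_0 S) \cong \Sk_0 S \star [] \cong \coprod_S (\Delta^0 \star []) \cong \coprod_S [0]$, which is the discrete category by the explicit coproduct in $\Cat$ from the auxiliary-colimits lemma. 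Naturality in $S$ follows from the functoriality of the coproduct.

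The only point needing care is checking that the isomorphism $(-)_{\leq 0}\circ N \cong \obj$ is natural, so that the transported adjunction is the familiar one. Alternatively, we can avoid this by noting that the discrete-category functor is left adjoint to $\obj$ directly, and conclude by uniqueness of adjoints.
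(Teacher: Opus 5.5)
Your proposal is correct and is essentially the paper's argument: compose $\Sk_0 \dashv (-)_{\leq 0}$ with $h \dashv N$, identify $(N\cC)_0 \cong \Cat([0],\cC) \cong \obj\cC$, and identify $h(\Sk_0 S) \cong \Sk_0 S \star [] \cong \coprod_S (\Delta^0 \star [])$ as the discrete category on $S$, exactly as in the paper's auxiliary-colimits lemma. Your closing remark about uniqueness of adjoints is slightly loose, since it transfers a natural isomorphism from one side of the adjunction to the other rather than removing the need for one; this does not affect the proof, because you verify naturality on both sides directly.
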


However, the adjoint pair $\Sk_0 \dashv (-)_{\geq 0}$ rather fits into the following quadruple:
\[\begin{tikzcd}
	\sSet &&& \Set
	\arrow[""{name=0, anchor=center, inner sep=0}, "{(-)_{\leq 0}}"{description}, from=1-1, to=1-4]
	\arrow[""{name=1, anchor=center, inner sep=0}, "{\pi_0(-) \cong \coeq(d_0,d_1 : (-)_1 \rightarrow (-)_0)}", shift left=5, curve={height=-30pt}, from=1-1, to=1-4]
	\arrow[""{name=2, anchor=center, inner sep=0}, "{\Sk_0}"{description}, shift right=2, curve={height=18pt}, from=1-4, to=1-1]
	\arrow[""{name=3, anchor=center, inner sep=0}, "{\Cosk_0}", shift left=2, curve={height=-18pt}, from=1-4, to=1-1]
	\arrow["\dashv"{anchor=center, rotate=-91}, draw=none, from=0, to=3]
	\arrow["\dashv"{anchor=center, rotate=-88}, draw=none, from=1, to=2]
	\arrow["\dashv"{anchor=center, rotate=-89}, draw=none, from=2, to=0]
\end{tikzcd}\cdpunct[0pt]{.}\]

Therefore, it is natural to ask whether these additional adjoints $\Cosk_0$ and $\pi_0$ descend to the level of categories. Fortunately, both the functors $\Sk_0,\Cosk_0$ factors through the fully faithful functor $N : \Cat \rightarrow \sSet$. One readily verifies this by checking that for all $X \in \Set$, both $\Sk_0(X)$ and $\Cosk_0(X)$ satisfy the spine extension property. Do recall that $\Cosk_0(X)$ is a simplicial set with $\Cosk_0(X)_n= X^{n+1}$, such that the face maps $d_i : X^{n+1} \rightarrow X^n$ are the product projection maps, and the degeneracy maps $s_i : X^{n} \rightarrow X^{n+1}$ are the diagonal map at the $i$th factor, i.e., $x_i \mapsto (x_i,x_i)$.

\begin{lemma}\label{lemma:catset0}
	There exists the quadruple of adjoints
	\[\begin{tikzcd}
		\Cat &&& \Set
		\arrow[""{name=0, anchor=center, inner sep=0}, "\obj"{description}, from=1-1, to=1-4]
		\arrow[""{name=1, anchor=center, inner sep=0}, "{\pi_0}", shift left=5, curve={height=-30pt}, from=1-1, to=1-4]
		\arrow[""{name=2, anchor=center, inner sep=0}, "{\mathrm{D}}"{description}, shift right=2, curve={height=18pt}, from=1-4, to=1-1]
		\arrow[""{name=3, anchor=center, inner sep=0}, "{\mathrm{ID}}", shift left=2, curve={height=-18pt}, from=1-4, to=1-1]
		\arrow["\dashv"{anchor=center, rotate=-90}, draw=none, from=0, to=3]
		\arrow["\dashv"{anchor=center, rotate=-89}, draw=none, from=1, to=2]
		\arrow["\dashv"{anchor=center, rotate=-90}, draw=none, from=2, to=0]
	\end{tikzcd}\cdpunct{.}\]
	Where, we call $h\circ\Sk_0$ as the functor $\mathrm{D}$ for discrete category, $h\circ\Cosk_0$ as the functor $\mathrm{ID}$ for indiscrete category, $(N-)_{\leq 0}$ as the functor $\obj$, and finally $\pi_0(N-)$ as simply $\pi_0$. Furthermore, this quadruple factors through the fully faithful functor $i : \Grpd \hookrightarrow \Cat$. 
\end{lemma}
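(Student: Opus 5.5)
The plan is to transport the simplicial quadruple $\pi_0 \dashv \Sk_0 \dashv (-)_{\leq 0} \dashv \Cosk_0$ along the reflective embedding $N$. The key fact is the one noted just before the statement: both $\Sk_0(S)$ and $\Cosk_0(S)$ satisfy the spine extension property. By \cref{lemma:spine-N} there are therefore categories $\mathrm{D}(S)$ and $\mathrm{ID}(S)$ with $N\mathrm{D}(S) \cong \Sk_0 S$ and $N\mathrm{ID}(S) \cong \Cosk_0 S$. Since $h N \cong 1$ (the counit is iso because $N$ is fully faithful), these agree with $h\Sk_0$ and $h\Cosk_0$. Explicitly, $\mathrm{D}(S)$ is discrete on $S$ and $\mathrm{ID}(S)$ is indiscrete on $S$, since $\Cosk_0(S)_1 = S^2$.

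Each adjunction then follows from a chain of natural bijections.

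\textbf{$\mathrm{D} \dashv \obj$.} This is the composite $h\Sk_0 \dashv (-)_{\leq 0}N$, and $(N\cC)_0 = \Cat([0],\cC) \cong \obj\cC$ naturally.

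\textbf{$\obj \dashv \mathrm{ID}$.} We have
\[\Cat(\cC,\mathrm{ID}(S)) \cong \sSet(N\cC, N\mathrm{ID}(S)) \cong \sSet(N\cC,\Cosk_0 S) \cong \Set((N\cC)_0,S),\]
using full faithfulness of $N$ and then $(-)_{\leq 0} \dashv \Cosk_0$.

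\textbf{$\pi_0 \dashv \mathrm{D}$.} Define $\pi_0(\cC) := \pi_0(N\cC)$. Then
\[\Cat(\cC,\mathrm{D}(S)) \cong \sSet(N\cC,\Sk_0 S) \cong \Set(\pi_0 N\cC, S).\]
Here $\pi_0 N\cC$ is the coequalizer of source and target, i.e.\ the set of connected components of $\cC$.

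All bijections are natural because each step is natural: the unit and counit isomorphisms of $N$, and the simplicial adjunctions.

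For the factorization through $i : \Grpd \hookrightarrow \Cat$, I will check the following.
\begin{itemize}
\item $\mathrm{D}(S)$ and $\mathrm{ID}(S)$ are groupoids: identities, respectively unique arrows with unique inverses.
\item Restricting $\obj$ and $\pi_0$ along $i$ preserves the adjunctions, because $i$ is fully faithful and the right and left adjoints $\mathrm{D}$ and $\mathrm{ID}$ land in $\Grpd$. The hom-set bijections restrict verbatim.
\end{itemize}

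The main obstacle is the spine extension property for $\Cosk_0 S$, i.e.\ showing $\sSet(\Delta^n,\Cosk_0 S) \to \sSet(I_n,\Cosk_0 S)$ is bijective. Both sides reduce to $S^{n+1}$:
\begin{itemize}
\item $\sSet(\Delta^n,\Cosk_0 S) \cong \Set((\Delta^n)_0,S) = S^{n+1}$ by the adjunction $(-)_{\leq 0} \dashv \Cosk_0$;
\item $\sSet(I_n,\Cosk_0 S) \cong \Set((I_n)_0,S) = S^{n+1}$ by the same adjunction;
\item $(i_n)_0$ is a bijection on vertices, so the restriction map is the identity of $S^{n+1}$.
\end{itemize}

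For $\Sk_0 S$, a map from $I_n$ or from $\Delta^n$ into a constant simplicial set is determined by one point, since both are connected. So the spine extension property again holds.
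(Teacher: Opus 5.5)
Your proposal is correct and follows the paper's route: you transport the simplicial quadruple $\pi_0 \dashv \Sk_0 \dashv (-)_{\leq 0} \dashv \Cosk_0$ through the fully faithful nerve, using that $\Sk_0 S$ and $\Cosk_0 S$ satisfy the spine extension property, and then observe that $\mathrm{D}(S)$ and $\mathrm{ID}(S)$ are groupoids. You also supply the details the paper leaves as ``one readily verifies'': the spine extension property (via the adjunctions with $(-)_{\leq 0}$ and with $\pi_0$) and the explicit hom-set bijections.
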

\begin{proof}
	For the final statement regarding factoring through $i : \Grpd \hookrightarrow \Cat$, simply observe that both $\mathrm{D}$ and $\mathrm{ID}$ are discrete and indiscrete groupoids respectively. In particular, $\mathrm{ID}$ is a category with the property that for all objects $c,d \in \mathrm{ID}(X)$, the hom set satisfies $|\mathrm{ID}(X)(c,d)|=1$.
\end{proof}

The second adjunction that we obtain is at the case of $n=1$. It the usual free category adjunction involving $\Cat$ and \emph{reflexive quivers}. Note that we are identifying the category of reflexive quivers with the isomorphic choice $\sSet^{\leq 1}$.

\begin{lemma}\label{lemma:catset1}
	There exists an adjunction
	\[\begin{tikzcd}
		{\sSet^{\leq 1}} && \Cat
		\arrow[""{name=0, anchor=center, inner sep=0}, "{h(\Sk_1-)}", curve={height=-12pt}, from=1-1, to=1-3]
		\arrow[""{name=1, anchor=center, inner sep=0}, "{(N-)_{\leq 1}}", curve={height=-12pt}, from=1-3, to=1-1]
		\arrow["\dashv"{anchor=center, rotate=-90}, draw=none, from=0, to=1]
	\end{tikzcd}\cdpunct{,}\]
	where $h(\Sk_1-)$ is the free category functor on a reflexive quiver, and $(N-)_{\leq 1}$ is the underlying reflexive quiver functor. However, there is neither any factoring of this adjunction through $\Grpd$ nor any further left or right adjoints. 
\end{lemma}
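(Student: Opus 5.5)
The adjunction itself is obtained by composing adjoints, exactly as for \cref{lemma:catset0} at $n=0$. We compose $\Sk_1 \dashv (-)_{\leq 1}$ (from the triple $\Sk_1 \dashv (-)_{\leq 1} \dashv \Cosk_1$) with $h \dashv N$. This gives natural bijections
\[\Cat(h\Sk_1 Q,\cC)\cong \sSet(\Sk_1 Q, N\cC)\cong \sSet^{\leq 1}(Q,(N\cC)_{\leq 1})\]
for every $Q\in\sSet^{\leq 1}$ and $\cC \in \Cat$. Next we identify the two functors concretely. First, $(N\cC)_{\leq 1}$ has $0$-simplices $\obj\cC$, $1$-simplices $\mor\cC$, faces given by source and target, and degeneracy given by identities. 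This is precisely the underlying reflexive quiver. Second, $\Sk_1 Q$ is $1$-skeletal, so $\sk_2\Sk_1Q=\Sk_1 Q$ and $(\Sk_1Q)_2^{\nd}=\emptyset$. The pushout of \cref{lemma:hXcons} is therefore along an empty coproduct, so $h\Sk_1Q\cong F(\Sk_1Q)$. By \cref{lemma:FXcons}, this is the category of finite paths of non-degenerate edges, i.e.\ of non-identity arrows of the reflexive quiver $Q$, with empty paths as identities. That is the free category on $Q$.

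For the negative claims we use counterexamples. For no factorization through $\Grpd$: take $Q$ to be the reflexive quiver with one non-degenerate edge $0\to 1$. Then $h\Sk_1Q\cong[1]$, which is not a groupoid, so the left adjoint does not land in $\Grpd$. For no further left adjoint of $h\Sk_1$: a left adjoint would make $h\Sk_1$ preserve limits, in particular the terminal object. The terminal reflexive quiver has one vertex and only the degenerate loop. Its free category is $[0]$, which is terminal, so this test is inconclusive. Instead use binary products. Let $L$ be the reflexive quiver with one vertex and one non-degenerate loop, so $h\Sk_1L=\mathbb{N}$ as a monoid. In $\sSet^{\leq 1}$, the product $L\times L$ has one vertex and $1$-simplices $\{s_0,\ell\}^2$, giving three non-degenerate loops. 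Its free category is the free monoid on three generators, which is not isomorphic to $\mathbb{N}\times\mathbb{N}$ because the latter is commutative. Hence $h\Sk_1$ does not preserve products and has no left adjoint.

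For no further right adjoint of $(N-)_{\leq 1}$: such a right adjoint would force $(N-)_{\leq 1}$ to preserve colimits, in particular coequalizers or pushouts. Take the pushout in $\Cat$ of $[1]\leftarrow [0]\sqcup[0]\rightarrow [1]$ gluing two arrows $0\to1$ and $1\to 2$ end to end. The colimit in $\Cat$ is $[2]$, whose quiver has the extra composite edge $0\to 2$. The pushout of the quivers has no such edge, so the comparison map is not an isomorphism.

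The main obstacle is keeping the negative statements clean: choosing a limit that $h\Sk_1$ visibly fails to preserve, and checking the product computation in $\sSet^{\leq 1}$ (degenerate times non-degenerate gives non-degenerate edges). The positive part is essentially bookkeeping with the two earlier pushout lemmas.
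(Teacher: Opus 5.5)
Your proof is correct and follows the paper's intended route: the adjunction is the composite of $\Sk_1 \dashv (-)_{\leq 1}$ with $h \dashv N$, and $h\Sk_1 Q$ is the free category of \cref{lemma:FXcons} because $\Sk_1 Q$ has no non-degenerate $2$-simplices, so the pushout of \cref{lemma:hXcons} is trivial. The paper asserts the negative claims without any argument, and your counterexamples supply them correctly ($[1]$ is not a groupoid; $h\Sk_1$ fails to preserve $L\times L$, since the free monoid on three loops is non-commutative; $(N-)_{\leq 1}$ fails to preserve the pushout giving $[2]$), with one slip: the pushout that glues two arrows end to end has apex $[0]$, i.e.\ $[1]\leftarrow[0]\rightarrow[1]$ with the point sent to $1$ and $0$, not $[0]\sqcup[0]$.
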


There is still another important category to consider in the $n=1$ case, which is $\Set^{[0]\rightrightarrows [1]}$. We denote this category as $\sSet^{\leq 1}_s$. The key point is that $\sSet^{\leq 1}_s$ is also equivalently the category of quivers. This yields the composite adjoints
\[\begin{tikzcd}
	{\sSet^{\leq1}_s} && {\sSet^{\leq 1}} && \Cat
	\arrow[""{name=0, anchor=center, inner sep=0}, "{\coprod_s}", shift left=2, from=1-1, to=1-3]
	\arrow["{F \circ \coprod_{s}}", shift left=4, curve={height=-30pt}, from=1-1, to=1-5]
	\arrow[""{name=1, anchor=center, inner sep=0}, "{U_{s}}", shift left=2, from=1-3, to=1-1]
	\arrow[""{name=2, anchor=center, inner sep=0}, "F", shift left=2, from=1-3, to=1-5]
	\arrow["{U_s \circ U}", shift left=4, curve={height=-30pt}, from=1-5, to=1-1]
	\arrow[""{name=3, anchor=center, inner sep=0}, "U", shift left=2, from=1-5, to=1-3]
	\arrow["\dashv"{anchor=center, rotate=-90}, draw=none, from=0, to=1]
	\arrow["\dashv"{anchor=center, rotate=-90}, draw=none, from=2, to=3]
\end{tikzcd}\cdpunct{,}\]
exhibiting the usual free category and underlying quiver construction. Here, $\coprod_{s}$ is the functor that freely adjoins degeneracies and the functor $U_{s}$ forgets the existence of the degeneracy map $s_0 : X_0 \rightarrow X_1$.

Finally, any consideration of the $n \geq 2$ case is redundant as $h \circ \sk_2 \cong h$ using the counit map $\epsilon^{\Sk_2}$. This is further justified by noting that both the left adjoints $\mathrm{D}$ and $F$ are precisely the auxiliary steps that appear in the pushout construction of $h$ which terminates at the level $2$. 

\subsection{\tpdf{Coequalizers in $\Cat$}}

While we have already provided a complete description of coproducts, we haven't yet done the same for coequalizers. Indeed, we intentionally chose to avoid them until now because of their complexity and their nonessential role in our proof of cocompleteness. To further justify this choice, we now derive a complete description of them using the reflecting embedding $N : \Cat \rightarrow \sSet$.

Consider any two parallel arrow diagram $F,G: \cC \rightrightarrows \cD$ in $\Cat$. The colimit of this diagram is defined as the coequalizer $\coeq(F,G)$. Using the nerve functor, we can reduce the problem to computing the coequalizer of the diagram $NF,NG : N\cC \rightrightarrows N\cD$ (i.e., $\coeq(NF,NG)$) and then computing its homotopy category. Although coequalizers in simplicial sets are not trivial either, they can still be computed levelwise as coequalizers in $\Set$, whose description is well known. Moreover, we only require this computation up to the $2$-simplices level as this data is sufficient to compute the homotopy category.

Our first goal is to compute the free category $F\coeq(NF,NG)$, which only depends on the two sets: $\coeq(NF_0,NG_0)$ and $\coeq(NF_1,NG_1)$. Let $\pi_0: N\cD_0 \rightarrow \coeq(NF_0,NG_0)$ be the canonical coequalizer map whose action on the elements is denoted as $x \mapsto [x]$. The key idea is that $\coeq(NF_1,NG_1)$ can be computed locally through a collection of local coequalizers for all $[x],[y] \in \coeq(NF_0,NG_0)$.

\begin{definition}
	For a given $X \in \sSet^{\leq 1}$ and all $a,b \in X_0$, define an indexed family of sets \[\{X_1(a,b)\}_{(a,b) \in X_0 \times X_0} \in \Set\] by partitioning the set $X_1$ into the equivalences classes defined by the source and target pairs $(a,b)$. These sets are called the \emph{local (1-simplices) sets}.
\end{definition} 
\begin{lemma}
	 The local coequalizer $coeq(NF_1,NG_1)([x],[y])$ is computed by the following: \[\coeq(NF_1,NG_1)([x],[y]) \cong \left(\Coprod\limits_{(i,j)\in [x] \times [y]} N\cD_1(i,j)\right) \Big/ (NF_1 \sim NG_1) \cdpunct{.}\]
\end{lemma}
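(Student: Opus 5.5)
The plan is to compute the coequalizer $Q_1 := \coeq(NF_1,NG_1)$ in $\Set$ explicitly and then check that it splits compatibly with the source/target decomposition over $Q_0 := \coeq(NF_0,NG_0)$. Recall that in $\Set$ the coequalizer of $NF_1,NG_1 : N\cC_1 \rightrightarrows N\cD_1$ is $N\cD_1/\approx$, where $\approx$ is the equivalence relation generated by $NF_1(c) \approx NG_1(c)$ for $c \in N\cC_1$. Since colimits in $\sSet^{\leq 1}$ are computed levelwise, the face maps $d_0,d_1 : N\cD_1 \to N\cD_0$ descend to maps $Q_1 \to Q_0$. These are induced by the universal property, using that $F,G$ commute with source and target. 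So $Q_1$ carries source and target maps into $Q_0$, and the local set $Q_1([x],[y])$ is by definition the fiber of $(d_1,d_0) : Q_1 \to Q_0\times Q_0$ over $([x],[y])$.

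\textbf{Step 1: the quotient map preserves local pieces.} First decompose $N\cD_1 = \coprod_{(i,j)\in N\cD_0\times N\cD_0} N\cD_1(i,j)$ and regroup it as
\[
N\cD_1 = \coprod_{([x],[y])\in Q_0\times Q_0}\ \coprod_{(i,j)\in[x]\times[y]} N\cD_1(i,j).
\]
The quotient map $N\cD_1 \to Q_1$ commutes with the face maps into $N\cD_0 \to Q_0$. Hence it sends the block indexed by $([x],[y])$ into the fiber $Q_1([x],[y])$.

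\textbf{Step 2: the generating relation stays within a block.} If $c \in N\cC_1(a,b)$, then $NF_1(c)$ lies in $N\cD_1(Fa,Fb)$ and $NG_1(c)$ lies in $N\cD_1(Ga,Gb)$. Since $[Fa]=[Ga]$ and $[Fb]=[Gb]$ in $Q_0$, both elements lie in the same block $([Fa],[Fb])$. Therefore every generating pair, and hence the whole generated equivalence relation $\approx$, only relates elements within a single block. This gives
\[
N\cD_1/\!\approx\ \cong \coprod_{([x],[y])} \Big(\coprod_{(i,j)\in[x]\times[y]} N\cD_1(i,j)\Big)\Big/(NF_1\sim NG_1),
\]
where the relation on each block is the one generated by those pairs $(NF_1(c),NG_1(c))$ that land in that block.

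\textbf{Step 3: matching the fibers.} By Step 1, the summand indexed by $([x],[y])$ maps into $Q_1([x],[y])$. The quotient map is surjective, so these maps are jointly surjective. Step 2 shows they are also jointly injective. Comparing fibers over $Q_0\times Q_0$ then yields the stated isomorphism.

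I expect the main obstacle to be the bookkeeping in Step 2. One has to make precise that the equivalence relation generated on the whole set $N\cD_1$ restricts to a block exactly as the relation generated inside that block. I would prove this by noting that a zig-zag of generating pairs never leaves its block, since each generating step preserves the block index by the computation above. Degenerate $1$-simplices need no separate treatment, because they sit in the blocks $N\cD_1(i,i)$ like any other element.
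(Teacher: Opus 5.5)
Your proof is correct and takes the same approach as the paper. Both rest on the fact that $F$ and $G$ commute with source and target, so each generating pair $NF_1(c)\sim NG_1(c)$, and hence the whole generated equivalence relation, stays inside a single block indexed by a pair of classes in $\coeq(NF_0,NG_0)$. Your version makes explicit the zig-zag argument and the block-to-fiber bijection, which the paper's proof only asserts.
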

\begin{proof}
	Both $NF$ and $NG$ are simplicial maps. Hence, they are compatible with the source, target, and identity maps. Consequentially, the coequalizer relations $NF_{\{i\}}(\sigma) \sim NG_{\{i\}}(\sigma)$ for all $\sigma \in N\cC_{\{i\}}$ and $i \in \{0,1\}$ are also stable under these three maps. As a result, the coequalizer equivalence relation generated by $NF_1 \sim NG_1$ is also compatible with $NF_0 \sim NG_0$ using the source, target, and identity maps. This provides the desired expression for the local coequalizer.
\end{proof}

The free category $F\coeq(NF,NG)$ is then the category formed by freely composing the arrows in the $1$-skeletal simplicial set $\coeq(NF,NG)_{\leq 1}$. Afterwards, we only need to identify the compositions corresponding to whenever there exist some $2$-simplex in $\coeq(NF_2,GF_2)$ filing a particular $2$-sphere. However, the canonical coequalizer map $N\cD_2 \rightarrow \coeq(NF_2,GF_2)$ is a surjective map as it a coequalizer of sets. So, we only require existence the existence of some $2$-simplex in the preimage of this map.

\begin{remark}
	In our construction of $hX$, the generators are given by declaring $gf \sim h$ whenever there exists a $2$-simplex filling the $2$-sphere formed by $f,g,h$. In particular, this relation does not depend upon which $2$-simplex it is or how many such $2$-simplices there are. Hence, we only need to choose one $2$-simplex filling a given $2$-sphere.
\end{remark}

Using this observation together with the description of $F\coeq(NF,NG)$, we obtain the following lemma providing a complete description of coequalizers in $\Cat$.

\begin{lemma} \label{lemma:coeqcat}
	The coequalizer $\coeq(F,G)$ is described as follows:
	\begin{description}
		\item[objects] are given by the collection $\coeq(NF_0,NG_0)$,
		\item[morphisms] are given by the collection of homsets $F\coeq(NF,NG)([x],[y])$ with objects $[x],[y] \in \coeq(NF_0,NG_0)$, subject to the relations $[g]\circ[f]=[h]$ whenever there exists any corresponding representatives $f',g',h' \in N\cD_1$ that are the boundary of a $2$-simplex in $N\cD_2$.
	\end{description}
\end{lemma}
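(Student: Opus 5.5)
The plan is to compute $\coeq(F,G)$ through the reflection instead of building it from congruences. By the last corollary of \cref{sec:ladj}, we have $\coeq(F,G) \cong h(\coeq(NF,NG))$, where the inner coequalizer is taken in $\sSet$. By the lemma on $\sk_2$, $h(\coeq(NF,NG)) \cong h(\sk_2\coeq(NF,NG))$. So only the levels $0,1,2$ of $X := \coeq(NF,NG)$ matter, and these are computed levelwise as coequalizers in $\Set$. Set $Q:=\coeq(NF,NG)$. \cref{lemma:hXcons} then describes $hQ$ concretely. Its objects are $Q_0 = \coeq(NF_0,NG_0)$. Its morphisms are the free category $FQ$ on the nondegenerate $1$-simplices of $Q$, that is, the free category $F\coeq(NF,NG)$ on the reflexive quiver $Q_{\leq 1}$ via \cref{lemma:catset1}. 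These are quotiented by the congruence generated by $g\circ f \sim h$ for each nondegenerate $2$-simplex of $Q$ with boundary $(f,g,h)$.

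The second step identifies the local hom-data. The preceding lemma gives the local sets $Q_1([x],[y])$ as coequalizers of the disjoint unions of the $N\cD_1(i,j)$ over representatives. This matches the description of the morphisms in the statement as homsets of $F\coeq(NF,NG)$.

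The third step translates the relations. A $2$-simplex of $Q$ with boundary $([f],[g],[h])$ exists iff some $2$-simplex of $N\cD_2$ maps to it, because the quotient $N\cD_2 \to Q_2$ is surjective. The face maps commute with the quotient maps, so its faces $f',g',h'\in N\cD_1$ are representatives of $[f],[g],[h]$. Conversely, any commuting triangle $g'f'=h'$ in $\cD$ is a $2$-simplex of $N\cD$, and it gives a $2$-simplex of $Q$ with boundary $([f'],[g'],[h'])$. The remark after the construction of $hX$ says the relation depends only on the existence of a filler, so the generating relations are exactly those listed in the statement.

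Two details need care. First, relations coming from degenerate $2$-simplices, and classes $[f]$ that become degenerate in $Q$ (identity images), must be handled. Degenerate $2$-simplices only impose $f\circ 1 = f$, which holds already in $FQ$ if we treat degenerate $1$-simplices as identities. I therefore plan to interpret $F\coeq(NF,NG)$ via the reflexive-quiver free functor, so that degenerate edges become identities, and to check that allowing all $2$-simplices, not just nondegenerate ones, gives the same quotient. Second, I need naturality of $h\circ\sk_2 \cong h$, so that the counit does not alter the coequalizer. The main obstacle is the first point: ensuring that the identification of classes in $Q_1$ with identities, forced by degeneracies, is captured correctly by the statement's relations.
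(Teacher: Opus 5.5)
Your proposal is correct and follows essentially the paper's route: reflect the coequalizer through $N$, compute $\coeq(NF,NG)$ levelwise up to dimension $2$, apply the construction of \cref{lemma:hXcons}, and use surjectivity of $N\cD_2 \rightarrow \coeq(NF_2,NG_2)$ together with the remark that only the existence of a filler matters. Your extra care with degenerate simplices makes explicit what the paper leaves implicit: you read $F$ as the reflexive-quiver free category $h\Sk_1$, so that edge classes which become degenerate in the quotient turn into identities, and you note that degenerate $2$-simplices impose only unit laws. The naturality of $h\circ\sk_2\cong h$ that you flag is not actually needed, since only a single object is being described.
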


The important takeaway is that verifying the universal property of above construction is highly intricate and cumbersome. In particular, our approach was designed to circumvent this difficulty. 

\subsection{Localization}\label{sec:loc}

Let $I$ be the free isomorphism category and let $F:\cC \rightarrow \cD$ be an arbitrary functor that satisfy $F(f) \in \iso(\cD)$ for all $f \in \cC$.  Each commuting square 
\[\begin{tikzcd}
	{\Coprod\limits_{f \in \mor \cC} [1]} && \cC \\
	\\
	{\Coprod\limits_{f \in \mor \cC} I} && \cD
	\arrow[from=1-1, to=1-3]
	\arrow[hook', from=1-1, to=3-1]
	\arrow["F", from=1-3, to=3-3]
	\arrow[from=3-1, to=3-3]
\end{tikzcd}\]
encapsulate such a functor $F: \cC \rightarrow \cD$, which factors as $F: \cC \rightarrow \iso(\cD) \hookrightarrow \cD$ by definition. Conversely, each such functor that factors through the isomorphisms extends uniquely to such a square.

Thus, we can construct and analyze a universal such square by using pushouts. This pushout allows us to construct a non-trivial adjoint to the inclusion functor $\Grpd \hookrightarrow \Cat$.

\begin{definition}
	Let $\cL(\cC)$ be the pushout defined by the following square:
	\[\begin{tikzcd}
		{\Coprod\limits_{f \in \mor \cC} [1]} && \cC \\
		\\
		{\Coprod\limits_{f \in \mor \cC} I} && {\cL(\cC)}
		\arrow[from=1-1, to=1-3]
		\arrow[hook', from=1-1, to=3-1]
		\arrow["{\gamma_{\cC}}", from=1-3, to=3-3]
		\arrow[from=3-1, to=3-3]
		\arrow["\ulcorner"{anchor=center, pos=0.125, rotate=180}, draw=none, from=3-3, to=1-1]
	\end{tikzcd}\cdpunct[20pt]{.}\]
	We call $\cL(\cC)$ the \emph{total localization} of $\cC$ and $\gamma_{\cC}$ as the \emph{localization map} of $\cC$.
\end{definition}

While computing $\cL(\cC)$ is difficult, we can utilize the nerve functor to provide a concrete description for it. To this end, we define $\cL_{N}(N\cC)$ to be the pushout given by the following diagram:
\[\begin{tikzcd}
	{\Coprod\limits_{f \in \mor \cC} N[1]} && {N\cC} \\
	\\
	{\Coprod\limits_{f \in \mor \cC} NI} && {\cL_{N} (N\cC)}
	\arrow[from=1-1, to=1-3]
	\arrow[hook', from=1-1, to=3-1]
	\arrow[from=1-3, to=3-3]
	\arrow[from=3-1, to=3-3]
	\arrow["\ulcorner"{anchor=center, pos=0.125, rotate=180}, draw=none, from=3-3, to=1-1]
\end{tikzcd}\cdpunct[20pt]{.}\]
Now using that $N$ preserve arbitrary coproducts, we obtain $\cL(\cC) \cong h \cL_{N}(N\cC)$. Importantly, the computation of $\cL_{N}$ is much simpler. As a result, we obtain the following description of $\cL(\cC)$.

\begin{lemma}
	The map $\gamma_{\cC} : \cC \rightarrow \cL(\cC)$ is bijective on objects. On morphisms, any element in $\cL(\cC)(x,y)$ can be represented by a (non-unique) finite zig-zag in $\cC$ of the form
	\[\begin{tikzcd}
		& \bullet && \bullet && \bullet \\
		x && \bullet && \cdots && y
		\arrow[from=1-2, to=2-1]
		\arrow[from=1-2, to=2-3]
		\arrow[from=1-4, to=2-3]
		\arrow[from=1-4, to=2-5]
		\arrow[from=1-6, to=2-5]
		\arrow[from=1-6, to=2-7]
	\end{tikzcd}\cdpunct[4pt]{,}\]
	subject to the following constraints for all $f,g \in \mor \cC$:
	\[\begin{tikzcd}
		& \bullet && \bullet & \bullet & \bullet & \bullet \\
		\bullet && \bullet && \bullet && \bullet \\
		& \bullet &&&& \bullet \\
		\bullet && \bullet & \bullet & \bullet && \bullet
		\arrow[""{name=0, anchor=center, inner sep=0}, "{f^{-1}}"', from=1-2, to=2-1]
		\arrow["f", from=1-5, to=1-6]
		\arrow["g", from=1-6, to=1-7]
		\arrow[""{name=1, anchor=center, inner sep=0}, "f"', from=2-3, to=1-4]
		\arrow[""{name=2, anchor=center, inner sep=0}, "gf"', from=2-5, to=2-7]
		\arrow["f"', from=3-2, to=4-1]
		\arrow["f", from=3-2, to=4-3]
		\arrow[shift left=5, between={0}{0.6}, equals, from=4-3, to=4-4]
		\arrow["1", from=4-4, to=4-4, loop, in=60, out=120, distance=5mm]
		\arrow["f", from=4-5, to=3-6]
		\arrow[shift right=5, between={0}{0.6}, equals, from=4-5, to=4-4]
		\arrow["f"', from=4-7, to=3-6]
		\arrow[between={0.2}{0.8}, equals, from=0, to=1]
		\arrow[between={0}{0.8}, equals, from=1-6, to=2]
	\end{tikzcd}\cdpunct[6pt]{.}\]
	The map $\gamma_{\cC} : \cC \rightarrow \cL(\cC)$ is then given by sending $f: x \rightarrow y \in \cC$ to the morphism in $\cL(\cC)(x,y)$ represented by the chain $\{ x \rightarrow y\}$.
\end{lemma}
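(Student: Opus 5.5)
The plan is to compute $\cL(\cC)\cong h\,\cL_N(N\cC)$ through the explicit construction of $h$ in \cref{lemma:FXcons} and \cref{lemma:hXcons}. Write $Y=\cL_N(N\cC)$. Pushouts in $\sSet$ are computed levelwise, so $Y_0$ is the pushout of $\coprod_f N[1]_0 \to N\cC_0$ along $\coprod_f N[1]_0\to\coprod_f NI_0$. Since $N[1]_0\to NI_0$ is a bijection (both categories have two objects), we get $Y_0\cong \obj\cC$. This shows that $\gamma_\cC$ is bijective on objects, because $\obj(hY)=Y_0$.

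Next we describe $Y_1$ and $Y_2$, using again that the pushout is levelwise.
\begin{itemize}
\item $Y_1$ consists of the arrows of $\cC$ together with, for each $f\in\mor\cC$, the $1$-simplices of $NI$ other than the image of $N[1]_1$. Up to degeneracies, the only new nondegenerate edge is $f^{-1}$ (the extra identity edges are identified with degenerate ones along the shared vertices).
\item $Y_2$ consists of the $2$-simplices of $N\cC$, which are the composable pairs $(f,g)$ with boundary $(f,g,gf)$, together with the $2$-simplices of each $NI$.
\end{itemize}
The nondegenerate $2$-simplices of $NI$ that are not already in $N[1]$ have boundaries $(f,f^{-1},1)$ and $(f^{-1},f,1)$. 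The remaining ones involve degenerate edges and impose only trivial relations.

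We then apply \cref{lemma:FXcons}. The free category $FY$ has as morphisms the finite paths in the generators $Y_1^{\nd}$, which are the nonidentity arrows of $\cC$ and the formal inverses $f^{-1}$. Writing each $f^{-1}$ as a backward arrow, such a path is a finite zig-zag in $\cC$ from $x$ to $y$, and identities can be inserted to make it alternate as in the displayed shape. Applying \cref{lemma:hXcons}, the morphisms of $hY$ are these zig-zags modulo the equivalence relation generated, compatibly with concatenation, by one relation for each nondegenerate $2$-simplex of $Y$:
\begin{itemize}
\item $g\circ f\sim gf$, coming from $N\cC$;
\item $f^{-1}\circ f\sim 1$ and $f\circ f^{-1}\sim 1$, coming from $NI$.
\end{itemize}
These are exactly the three displayed constraints. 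The formula for $\gamma_\cC$ follows because the map $N\cC\to Y$ sends the edge $f$ to the generator $f$, that is, to the one-step chain $\{x\to y\}$.

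I expect the main obstacle to be the bookkeeping of the simplices of $NI$, which is not $2$-skeletal. We need to check that its $2$-simplices contribute no relations beyond the two inverse laws and the trivial ones involving degenerate edges, and that the higher simplices can be ignored. The latter holds because $h\circ \sk_2\cong h$ by the earlier lemma on $\sk_2X\star[]$.

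One further point must be justified: the isomorphism $\cL(\cC)\cong hY$. This follows because $h$ is a left adjoint, so it preserves pushouts and coproducts, and because the counit $hN\cong 1$ holds since $N$ is fully faithful. Hence $h$ applied to the defining pushout square of $Y$ is the pushout square defining $\cL(\cC)$.
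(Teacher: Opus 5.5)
Your proposal is correct and follows essentially the same route as the paper. Both arguments identify $\cL(\cC)$ with $h\,\cL_N(N\cC)$, compute the pushout levelwise so that the $0$-simplices are unchanged and the only new edges are the formal inverses $f^{-1}$, form the free category on these edges together with the nonidentity arrows of $\cC$, and impose the relations $g\circ f\sim gf$ and $f^{-1}f\sim 1\sim ff^{-1}$ coming from the nondegenerate $2$-simplices of $N\cC$ and $NI$. Your version is more explicit than the paper's in two places: you list the nondegenerate $2$-simplices of $NI$, and you justify $\cL(\cC)\cong hY$ by noting that $h$ preserves colimits and that $hN\cong 1$.
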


\begin{proof}
	The bijection on objects of the functor $\gamma_{\cC} : \cC \rightarrow \cL(\cC)$ follows directly from the construction of pushouts. In particular, we are not modifying the $0$-simplices by the constructing of the pushout $\cL_{N}$ or by applying the functor $h$.
	
	At the level of morphisms, we are freely adding an inverse arrow corresponding to each $f \in \mor \cC$. We can label these newly added arrows as well as the already existing isomorphisms as $f^{-1}$ for each such $f$. Note that if a map $f : x \rightarrow y \in \cC$ already has an inverse $f^{-1} : y \rightarrow x \in \cC$ then the uniqueness of inverses ensures that whatever new inverse we freely add for $f$ is already identified with $f^{-1}$. As a consequence, at the free category level, the homsets will contain strings of maps generated by $f$ and $f^{-1}$. For example, $f_1f_2^{-1}f_1f_3^{-1}$ is one such possible string. At the level of homotopy category, we have to enforce the relations imposed by the original category $\cC$, as well as impose that $ff^{-1}=1=f^{-1}f$ for each $f$. This provides the desired identification with a zigzag by the depicting the maps $f^{-1}$ in the string as a backward arrow in the zigzag.
\end{proof}

The localization construction $\cL(-)$ is functorial in $\Cat$. In particular, it defines a functor $\cL: \Cat \rightarrow \Cat$ or $\Cat \rightarrow \Grpd$. Moreover, it is a left adjoint to the embedding $i : \Grpd \hookrightarrow \Cat$. These properties follow fairly straightforwardly from the universal property of the pushouts. Furthermore, the canonical map $\gamma_{\cC}$ also extends to a natural transformation $\gamma : 1_{\Cat} \Rightarrow \cL$ by the universality of pushouts. 

\begin{lemma}
	There exists a triple of adjoints
	\[\begin{tikzcd}
		\Grpd &&&& \Cat
		\arrow[""{name=0, anchor=center, inner sep=0}, "i"{description}, hook, from=1-1, to=1-5]
		\arrow[""{name=1, anchor=center, inner sep=0}, "\iso", shift left=3, curve={height=-18pt}, from=1-5, to=1-1]
		\arrow[""{name=2, anchor=center, inner sep=0}, "\cL"', shift right=3, curve={height=18pt}, from=1-5, to=1-1]
		\arrow["\dashv"{anchor=center, rotate=-91}, draw=none, from=0, to=1]
		\arrow["\dashv"{anchor=center, rotate=-89}, draw=none, from=2, to=0]
	\end{tikzcd}\cdpunct[0pt]{,}\]
	such that $\gamma$ is the unit of the adjunction $\cL \dashv i$.
\end{lemma}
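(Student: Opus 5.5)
We need two adjunctions, $\cL \dashv i$ with unit $\gamma$ and $i \dashv \iso$. For the second we also need to construct $\iso$ as a functor on $\Cat$. We take $\iso(\cC)$ to be the maximal subgroupoid (core) of $\cC$: the same objects, with morphisms the isomorphisms of $\cC$.

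\textbf{The adjunction $i \dashv \iso$.} This part is elementary and is checked directly. A functor $F : \cC \rightarrow \cD$ preserves isomorphisms, so it restricts to a functor $\iso(\cC) \rightarrow \iso(\cD)$. This makes $\iso$ a functor, and the inclusions $\varepsilon_{\cC} : i\,\iso(\cC) \hookrightarrow \cC$ form a natural transformation, which will be the counit. For a groupoid $\cG$ and a functor $F : i\cG \rightarrow \cC$, every morphism of $\cG$ is invertible, so $F$ sends it to an isomorphism of $\cC$. Hence $F$ factors uniquely through $\varepsilon_{\cC}$, and uniqueness holds because $\varepsilon_{\cC}$ is injective on objects and on morphisms. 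This gives the bijection $\Cat(i\cG,\cC) \cong \Grpd(\cG,\iso\cC)$, and its naturality in $\cG$ and $\cC$ is immediate.

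\textbf{The adjunction $\cL \dashv i$.} First we show that $\cL(\cC)$ is a groupoid. The functor $\gamma_{\cC}$ is bijective on objects by the preceding lemma. Every morphism of $\cL(\cC)$ is represented by a zigzag, i.e.\ a composite of images $\gamma_{\cC}(f)$ and of the freely adjoined inverses. Each $\gamma_{\cC}(f)$ is invertible, since the pushout square sends the copy of $[1]$ indexed by $f$ into $I$, where it becomes invertible. The adjoined arrows are inverses by definition, and a composite of isomorphisms is an isomorphism. So $\cL(\cC)$ lies in $\Grpd$.

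For the universal property, let $\cG$ be a groupoid and $F : \cC \rightarrow i\cG$ a functor. Every $F(f)$ is an isomorphism, and as the discussion preceding the definition of $\cL$ explains, this means $F$ extends uniquely to a commuting square with $\Coprod_{f} I \rightarrow \cG$. The free-isomorphism category $I$ is the universal way to invert the arrow of $[1]$, so each component of $\Coprod_f [1] \rightarrow \cG$ extends uniquely along $[1] \hookrightarrow I$. The pushout property then gives a unique $\tilde F : \cL(\cC) \rightarrow \cG$ with $\tilde F \circ \gamma_{\cC} = F$. Conversely, every $\tilde F$ arises in this way by precomposing with $\gamma_{\cC}$. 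This yields a bijection $\Grpd(\cL\cC,\cG) \cong \Cat(\cC, i\cG)$, given by $\tilde F \mapsto i\tilde F \circ \gamma_{\cC}$.

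This is exactly the statement that $\gamma_{\cC}$ is a universal arrow from $\cC$ to $i$. By the standard pointwise construction of adjoints, $\cL$ therefore extends to a left adjoint of $i$ with unit $\gamma$. The functorial extension of $\cL$ obtained this way agrees with the one induced by pushouts, because both are characterized by $\cL(G)\circ\gamma_{\cC} = \gamma_{\cD}\circ G$.

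\textbf{Main obstacle.} The main care point is the claim that $\cL(\cC)$ is a groupoid, since the pushout is a priori only a category. The argument above handles it through the generation statement of the previous lemma: every morphism is a composite of the $\gamma_{\cC}(f)$ and their inverses. We then only need that invertible arrows are closed under composition.

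The other point to verify is that the factorization of $F$ through $\Coprod_f I$ is unique. This holds because inverses in $\cG$ are unique. Hence the whole square is determined by $F$, which is exactly what the pushout's uniqueness requires.
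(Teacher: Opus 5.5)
Your proof is correct and follows the paper's route. The paper only asserts that functoriality of $\cL$, the adjunction $\cL \dashv i$, and naturality of $\gamma$ ``follow fairly straightforwardly from the universal property of the pushouts,'' and your argument (unique extension along $[1]\hookrightarrow I$, then the pushout property, then the universal-arrow construction of the left adjoint) is that argument written out; you also supply two things the paper leaves implicit, namely that $\cL(\cC)$ is a groupoid and the elementary core argument for $i \dashv \iso$. The groupoid claim does not even need the zigzag lemma: both pushout legs factor through the core of $\cL(\cC)$, so the universal property forces $\iso(\cL(\cC)) = \cL(\cC)$.
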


\begin{definition}
	We call the functor $\cL : \Cat \rightarrow \Grpd$ (or $\Cat$) as the \emph{total localization} functor. 
\end{definition}

Total localization functor is obtained by freely inverting all the morphisms in a category. However, we might want to freely invert only a certain class of morphisms. We first define the category of Marked Categories to precisely encode our object of study.

\begin{definition}
	We call a pair $(\cC,U_{\cC})$ with $\cC \in \Cat$ and $U_{\cC} \subseteq \mor \cC$ as a \emph{marked category}.
\end{definition}

\begin{definition}
	There exists a category $\MarCat$ whose objects are marked categories and whose morphisms $F : (\cC, U_{\cC}) \rightarrow (\cD,U_{\cD})$ are defined by functors $F: \cC \rightarrow \cD$ that preserves the marking, that is, $F(U_{\cC}) \subseteq U_{\cD}$. $\MarCat$ has same compositions and identities as induced from $\Cat$.
\end{definition}

Now we have the following analogous constructions for the relative localization denoted as $\cL(\cC,U_{\cC})$.

\begin{definition}
	For each marked category $(\cC,U_{\cC})$, let $\cL(\cC, U_{\cC})$ denote the pushout given by the following pushout square:
	\[\begin{tikzcd}
		{\Coprod\limits_{f \in U_{\cC}} [1]} && \cC \\
		\\
		{\Coprod\limits_{f \in U_{\cC}} I} && {\cL(\cC,U_{\cC})}
		\arrow[from=1-1, to=1-3]
		\arrow[hook', from=1-1, to=3-1]
		\arrow["{\gamma_{\cC,U_{\cC}}}"{pos=0.7}, from=1-3, to=3-3]
		\arrow[from=3-1, to=3-3]
		\arrow["\ulcorner"{anchor=center, pos=0.125, rotate=180}, draw=none, from=3-3, to=1-1]
	\end{tikzcd}\cdpunct[20pt]{.}\]
	The object $\cL(\cC,U_{\cC})$ is called \emph{relative localization} of $\cC$ with respect to $U_{\cC}$. The map $\gamma_{\cC,U_{\cC}}$ is called the \emph{localization map}.
	
\end{definition}

\begin{lemma}
	The map $\gamma_{\cC,U_{\cC}} : \cC \rightarrow \cL(\cC,U_{\cC})$ is bijective on objects. On morphisms, any element in $\cL(\cC,U_{\cC})(x,y)$ can be represented by a (non-unique) finite zig-zag in $\cC$ of the form
	\[\begin{tikzcd}
		& \bullet && \bullet && \bullet \\
		\\
		X && \bullet && \cdots && Y
		\arrow["{\in U_{\cC}}"{description}, from=1-2, to=3-1]
		\arrow["{\in \mor \cC}"{description}, from=1-2, to=3-3]
		\arrow["{\in U_{\cC}}"{description}, from=1-4, to=3-3]
		\arrow["{\in \mor \cC}"{description}, from=1-4, to=3-5]
		\arrow["{\in U_{\cC}}"{description}, from=1-6, to=3-5]
		\arrow["{\in \mor \cC}"{description}, from=1-6, to=3-7]
	\end{tikzcd}\cdpunct[4pt]{,}\]
	subject to the following constraints for all $w \in U_{\cC}$ and $f,g \in \mor \cC$:
	\[\begin{tikzcd}
		& \bullet && \bullet & \bullet & \bullet & \bullet \\
		\bullet && \bullet && \bullet && \bullet \\
		& \bullet &&&& \bullet \\
		\bullet && \bullet & \bullet & \bullet && \bullet
		\arrow[""{name=0, anchor=center, inner sep=0}, "{w^{-1}}"', from=1-2, to=2-1]
		\arrow["f", from=1-5, to=1-6]
		\arrow["g", from=1-6, to=1-7]
		\arrow[""{name=1, anchor=center, inner sep=0}, "w"', from=2-3, to=1-4]
		\arrow[""{name=2, anchor=center, inner sep=0}, "gf"', from=2-5, to=2-7]
		\arrow["w"', from=3-2, to=4-1]
		\arrow["w", from=3-2, to=4-3]
		\arrow[shift left=5, between={0}{0.6}, equals, from=4-3, to=4-4]
		\arrow["1", from=4-4, to=4-4, loop, in=60, out=120, distance=5mm]
		\arrow["w", from=4-5, to=3-6]
		\arrow[shift right=5, between={0}{0.6}, equals, from=4-5, to=4-4]
		\arrow["w"', from=4-7, to=3-6]
		\arrow[between={0.2}{0.8}, equals, from=0, to=1]
		\arrow[between={0}{0.8}, equals, from=1-6, to=2]
	\end{tikzcd}\cdpunct[6pt]{.}\]
	The map $\gamma_{\cC,U_{\cC}} : \cC \rightarrow \cL(\cC,U_{\cC})$ is then given by sending $f: x \rightarrow y \in \cC$ to the morphism in $\cL(\cC,U_{\cC})(x,y)$ represented by the chain $\{ x \rightarrow y\}$.
\end{lemma}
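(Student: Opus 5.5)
The plan is to mirror the proof of the total localization lemma, replacing $\mor \cC$ by $U_{\cC}$ throughout. First, since $N$ preserves coproducts and is a reflective embedding with reflector $h$, and $h$ preserves pushouts as a left adjoint, I will realise $\cL(\cC,U_{\cC})$ as $h\,\cL_N(N\cC,U_{\cC})$. Here $\cL_N(N\cC,U_{\cC})$ is the pushout in $\sSet$ of $\Coprod_{f\in U_{\cC}} NI \leftarrow \Coprod_{f\in U_{\cC}} N[1] \rightarrow N\cC$, computed levelwise. The counit $hN \cong 1$ gives $h(\Coprod N[1]) \cong \Coprod [1]$, $h(\Coprod NI)\cong \Coprod I$ and $hN\cC\cong \cC$, so the pushout in $\Cat$ is the $h$ of the pushout in $\sSet$. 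Bijectivity of $\gamma_{\cC,U_{\cC}}$ on objects then follows directly. The map $N[1]\to NI$ is a bijection on $0$-simplices, so the levelwise pushout does not change $0$-simplices. Moreover, $hX$ has object set $X_0$ by \cref{lemma:hXcons}.

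Next, I will describe morphisms using the explicit model $hX$ from \cref{lemma:FXcons} and \cref{lemma:hXcons}, with $X=\cL_N(N\cC,U_{\cC})$. The nondegenerate $1$-simplices of $X$ are of three kinds: the nonidentity morphisms of $\cC$, the formal inverses $w^{-1}$ for $w\in U_{\cC}$ coming from $NI$, and the identity-like simplices $1$ of $NI$ identified with identities of $\cC$. So $FX(x,y)$ consists of strings in the letters $f$ and $w^{-1}$. Consecutive forward letters can be composed using the $2$-simplices of $N\cC$ (relation $g\circ f = gf$), and identities can be inserted. Hence every class is represented by an alternating string of the form $w_1^{-1}, f_1, w_2^{-1}, f_2,\dots$, i.e.\ a zig-zag as displayed.

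The $2$-simplices of $X$ impose exactly the listed constraints. Those from $N\cC$ give composition, and those from $NI$ (whose $2$-simplices witness $w^{-1}w=1$ and $ww^{-1}=1$) give the cancellation relations. A $2$-simplex of $NI$ with edges $w, w^{-1}, w$ gives the relation identifying a backward $w^{-1}$ with the zig-zag pattern shown. Finally, $\gamma$ sends $f$ to the one-step chain, since $N\cC\to X$ is the pushout inclusion.

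The main obstacle is checking that no extra relations come from $2$-simplices of the pushout that mix data from different copies of $NI$, or from $NI$ and $N\cC$. To handle this, I will note that a levelwise pushout of sets only contains simplices that already lie in $N\cC$ or in some single $NI$. Therefore every generating relation of $hX$ is one of the listed types, with identifications along shared vertices and along $w$ only. As in the total case, if some $w$ is already invertible in $\cC$, uniqueness of inverses makes the new $w^{-1}$ agree with the existing one in the quotient, so nothing further is needed.
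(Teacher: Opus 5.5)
Your proposal is correct and follows essentially the same route as the paper, which proves only the total case and declares the relative one analogous. That route is: realise $\cL(\cC,U_{\cC})$ as $h$ of the levelwise pushout in $\sSet$, observe that the $0$-simplices are unchanged, read morphisms as strings in the $f$'s and $w^{-1}$'s, and impose composition in $\cC$, $ww^{-1}=1=w^{-1}w$, and uniqueness of inverses for already-invertible $w$; your explicit check that the levelwise pushout creates no mixed $2$-simplices is a useful addition that the paper leaves implicit. Two small slips need fixing: the identities coming from $NI$ are degenerate $1$-simplices, so the nondegenerate ones are only the non-identity morphisms of $\cC$ and the new $w^{-1}$'s; and $NI$ has no $2$-simplex with boundary $w,w^{-1},w$ (its only nondegenerate $2$-simplices are $(w,w^{-1})$ and $(w^{-1},w)$), so the first displayed constraint is not a simplicial relation but simply the convention that a backward leg $w$ of the zig-zag denotes $w^{-1}$.
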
 

The construction of relative localization analogously extends to an adjunction.

\begin{lemma}
	Let $\iso_{m}: \Cat \rightarrow \MarCat$ be the marking functor that acts on the objects by $\cC \mapsto (\cC,\iso(\cC))$ and acts as the identity on morphisms. There exists an adjunction
	\[\begin{tikzcd}
		\MarCat &&&& \Cat
		\arrow[""{name=0, anchor=center, inner sep=0}, "\cL", curve={height=-12pt}, from=1-1, to=1-5]
		\arrow[""{name=1, anchor=center, inner sep=0}, "{\iso_m}", curve={height=-12pt}, from=1-5, to=1-1]
		\arrow["\dashv"{anchor=center, rotate=-90}, draw=none, from=0, to=1]
	\end{tikzcd}\cdpunct[0pt]{,}\]
	such the unit maps $\gamma_{\cC,U_{\cC}} : (\cC,U_{\cC}) \rightarrow \iso_m(\cL(\cC,U_{\cC}))$ are canonically defined through the relative localization maps $\gamma_{\cC,U_{\cC}}$ on the underlying categories.
\end{lemma}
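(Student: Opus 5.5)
The plan is to prove the adjunction directly through a natural hom-set bijection
\[\Cat(\cL(\cC,U_{\cC}),\cD) \cong \MarCat\big((\cC,U_{\cC}),\iso_m(\cD)\big),\]
obtained from the universal property of the defining pushout of $\cL(\cC,U_{\cC})$. The pushout exists because $\Cat$ is cocomplete.

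\emph{Step 1: the key fact about $I$.} I would first isolate the elementary fact that does the real work. The inclusion $[1] \hookrightarrow I$ induces a bijection between functors $I \rightarrow \cD$ and isomorphisms of $\cD$, that is, functors $[1] \rightarrow \cD$ whose image lies in $\iso(\cD)$. Here $I$ is the free isomorphism category, with objects $0,1$ and non-identity morphisms $u$ and $u^{-1}$.
\begin{itemize}
\item \emph{Existence.} A functor out of $I$ is determined by an arrow together with a two-sided inverse for it.
\item \emph{Uniqueness.} Inverses in $\cD$ are unique when they exist, so an isomorphism extends in exactly one way.
\end{itemize}
Applying this componentwise over a coproduct shows the following. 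Commuting squares from the span $\Coprod_{f\in U_{\cC}}I \leftarrow \Coprod_{f\in U_{\cC}}[1] \rightarrow \cC$ into $\cD$ correspond bijectively to functors $F:\cC\rightarrow\cD$ with $F(U_{\cC})\subseteq \iso(\cD)$. These are exactly the $\MarCat$-morphisms $(\cC,U_{\cC})\rightarrow(\cD,\iso(\cD))$. This is the same observation already used for the total localization, now with $\mor\cC$ replaced by $U_{\cC}$.

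\emph{Step 2: the bijection and the unit.} Combining Step 1 with the pushout universal property gives the bijection $\Phi:\Cat(\cL(\cC,U_{\cC}),\cD)\rightarrow \MarCat((\cC,U_{\cC}),\iso_m\cD)$, defined by $\Phi(\tilde F)=\tilde F\circ\gamma_{\cC,U_{\cC}}$.
\begin{itemize}
\item To see that $\gamma_{\cC,U_{\cC}}$ is itself a marked functor $(\cC,U_{\cC})\rightarrow\iso_m(\cL(\cC,U_{\cC}))$, note that each $w\in U_{\cC}$ factors through a copy of $I$ in the pushout square, hence becomes invertible.
\item Taking $\cD=\cL(\cC,U_{\cC})$ and $\tilde F=1$ then identifies the unit with $\gamma_{\cC,U_{\cC}}$.
\end{itemize}

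\emph{Step 3: functoriality and naturality.} Let $G:(\cC,U_{\cC})\rightarrow(\cC',U_{\cC'})$ be a marked functor. Since $G(U_{\cC})\subseteq U_{\cC'}$, it induces a morphism of spans:
\begin{itemize}
\item on the coproduct indices, reindex along $U_{\cC}\rightarrow U_{\cC'}$, $w\mapsto G(w)$;
\item on $\cC$, take $G$ itself.
\end{itemize}
The squares commute by construction, so $G$ induces $\cL(G)$ between the pushouts, and uniqueness in the pushout property gives functoriality. Naturality of $\Phi$ in $\cD$ is just postcomposition. Naturality in $(\cC,U_{\cC})$ follows from $\cL(G)\circ\gamma_{\cC,U_{\cC}}=\gamma_{\cC',U_{\cC'}}\circ G$, which is the defining property of $\cL(G)$.

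\emph{Main obstacle.} I expect the only genuinely non-formal point to be Step 1, and specifically making sure that no extra data is lost in the correspondence. That is, a square out of the coproduct of copies of $I$ must contain no information beyond the condition $F(U_{\cC})\subseteq\iso(\cD)$; this holds precisely because inverses are unique. Everything else is bookkeeping with pushouts, and no explicit description of $\cL(\cC,U_{\cC})$ (such as the zig-zag presentation) is needed.
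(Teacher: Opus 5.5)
Your proposal is correct and is essentially the paper's argument. The paper justifies this lemma only by the universal property of the defining pushout, together with its earlier observation at the start of the localization subsection that commuting squares out of the span $\coprod [1] \hookrightarrow \coprod I$, $\coprod [1] \rightarrow \cC$ correspond exactly to functors sending the chosen morphisms to isomorphisms. Your Steps 1--3 make this explicit: the uniqueness of inverses, the identification of the unit with $\gamma_{\cC,U_{\cC}}$, and naturality via $\cL(G)\circ\gamma_{\cC,U_{\cC}}=\gamma_{\cC',U_{\cC'}}\circ G$.
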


	\newpage 

	\clearpage 
	
	\bibliography{master} 
	
\end{document}